\documentclass[12pt]{amsart}
\usepackage{times, pdfpages, graphicx}
\usepackage{comment}
\usepackage{float}

\usepackage{exscale}
\usepackage{epsfig}
\usepackage{cancel}
%%%%%%%%%%%%%%%%%%%%%%%%%%%%%%%

\usepackage{amsthm}
\usepackage{amsfonts}

\usepackage{graphics,psfrag,graphicx,subfigure,epsfig, xypic}
\usepackage{amsmath,amsfonts,amssymb,amstext,amsthm,amscd,mathrsfs}
\usepackage[english]{babel}
\usepackage[all]{xy}
\usepackage{multicol}
\usepackage{rotating}

%\usepackage{showlabels}
%%%%%%%%%%%%%%%%%%%%%%%%%%%

%\stepcounter{section}
\usepackage{amsmath, amsfonts, amssymb, amscd, enumerate}
\usepackage{color}
\tolerance=10000
\sloppy

\usepackage[all]{xy}

\theoremstyle{plain}
 \newtheorem{theo}{Theorem}[section]

\theoremstyle{plain}
\newtheorem{corollary}[theo]{Corollary}
\newtheorem{proposition}[theo]{Proposition}
\newtheorem{definition}[theo]{Definition}
\newtheorem{remark}[theo]{Remark}
\newtheorem{example}[theo]{Example}

%\renewcommand{\=}{\overset{\operatorname{def}}{=}}

% Abbreviations for Greek letters
\newcommand{\beq}{\begin{equation}}
\newcommand{\eeq}{\end{equation}}

\newcommand{\Arg}{\textnormal{Arg}}

%\newcommand{\G}{\Gamma}

%\renewcommand{\S}{\Sigma}

%% \font\cmsss=cmss10 at 7pt
%% \font\cmssl=cmss10 at 12 pt
%% \font\bss=cmssdc10 at 12 pt
%% \font\bssl=cmssdc10 at 16 pt
%% \font\cmsslll=cmss10 at 14 pt
%%
%\font\lg=greek at 18 pt

\newcommand{\C}{\mathbb{C}}
\newcommand{\bR}{\mathbb{R}}

\newcommand{\R}{\mathbb{R}}
\renewcommand{\H}{\mathbb{H}}
\newcommand{\Z}{\mathbb{Z}}
\renewcommand{\P}{\mathbb{P}}

\newcommand{\bK}{\mathbb{K}}

\newcommand{\bS}{\mathbb{S}}

% Gothic letters
%\newfont{\goth}{eufm10 scaled \magstep1}

%\newcommand{\go}{\mathfrak{o}}

\newcommand{\pro}{{\mathrm{pr}_1}}

%\newcommand\Sp[1]{\mathrm{Sp(#1)}}

% Calligraphic letters

%\newfont{\mcal}{eusm10 scaled \magstep1}

%\newcommand{\cD}{\mathcal{D}}

%\newcommand{\cH}{\mathcal{H}}
\newcommand{\cI}{\mathcal{I}}

%\newcommand{\cL}{\mathcal{L}}

%\newcommand{\cR}{\mathcal{R}}

%Math symbols

%\newcommand{\p}{\partial}

\newcommand{\ra}{\rightarrow}

\renewcommand{\square}{\kern1pt\vbox
{\hrule height 0.6pt\hbox{\vrule width 0.6pt\hskip 3pt
\vbox{\vskip 6pt}\hskip 3pt\vrule width 0.6pt}\hrule height0.6pt}\kern1pt}

\DeclareMathOperator\Id{Id}

\renewcommand\Re{\operatorname{Re}}
\renewcommand\Im{\operatorname{Im}}

\renewcommand{\Re}{{\rm Re}}
\renewcommand{\Im}{{\rm Im}}

\newcommand{\sign}{\mathrm{sign}}

\newcommand{\be}{\begin{equation}}
\newcommand{\ee}{\end{equation}}

\def\<#1,#2>{\langle\,#1,\,#2\,\rangle}
\newcommand{\arr}{\begin{array}{rlll}}
\newcommand{\ea}{\end{array}}
\newcommand{\bea}{\begin{eqnarray}}
\newcommand{\eea}{\end{eqnarray}}
\newcommand{\bean}{\begin{eqnarray*}}
\newcommand{\eean}{\end{eqnarray*}}

%%%%%%%%%%%%%%%%%%%%%%%%%%%%%%%%%%%%%
%************************** SIDE-REMARKS *****************************
\def\sideremark#1{\ifvmode\leavevmode\fi\vadjust{%            The remark
\vbox to0pt{\hbox to 0pt{\hskip\hsize\hskip1em%               will appear only
\vbox{\hsize3cm\tiny\raggedright\pretolerance10000%          on the side
\noindent #1\hfill}\hss}\vbox to8pt{\vfil}\vss}}}%           in 3cm
%%\def\diaguparrow#1{\smash{
%%  \raise.6em\rlap{\scriptstyle #1}
%%  \lower.3em{\mathord{\diagup}}
%%  \raise.52em{\!\mathord{\nearrow}}}}
%%%%%%%%%%%%%%%%%%%%%%%%%%%%%%%%%%%%

%
%\renewcommand{\theequation}{\thesection.\arabic{equation}}
%\numberwithin{equation}{section}
\newcounter{ssig}
\setcounter{ssig}{0}

\newcounter{ttig}
\setcounter{ttig}{0}

%%%%%%%%%%%%%%%%%%%%%%%%%%%%%%%%%%%%%%%%%%%%%%%%%%%%%%%%
\begin{document}

\title[Continuation of $\mathbb K$-logarithm along curves]{On a continuation of quaternionic and octonionic logarithm along curves and the winding number}
\author{Graziano Gentili}
\address{DiMaI,     Universit\`a di Firenze, Viale Morgagni 67/A,\ Firenze, Italy}
\email { graziano.gentili@unifi.it }
\author{Jasna Prezelj}
\address{Fakulteta za matematiko in fiziko Jadranska 21 1000
  Ljubljana, Slovenija, UP FAMNIT, Glagolja\v ska 8, Koper Slovenija, IMFM, Jadranska 19, Slovenia}
\email { jasna.prezelj@fmf.uni-lj.si}
\author {Fabio  Vlacci}\address{DiSPeS Universit\`a di Trieste Piazzale Europa 1,\ Trieste,
  Italy} \email{ fvlacci@units.it} \thanks{\rm
    The first and third authors were partly supported by INdAM, through: GNSAGA; INdAM project ``Hypercomplex function theory and applications''.
  It was also partly supported by MIUR, through the projects: Finanziamento Premiale FOE 2014 ``Splines for accUrate NumeRics: adaptIve models for Simulation Environments''.
   The second author
  was partially supported by research program P1-0291 and by research
  project J1-3005 at Slovenian Research Agency. }

\maketitle
\section{Introduction}

%% After the introduction of $\exp_*(f) $ as the slice regular exponential of a slice regular function $f$ (see \cite{AdF}) over the quaternions or octonions,
%% it is natural to ask whether it is  possible to define a sort of slice regualr inverse of $\exp_*$ , namely $\log_*$, of a slice regular function.
%% It turns out, that the situation is much more complicated for hypercomplex (i.e. quaternionic or octonionic) slice regular functions compared to the analogous holomorphic case in the complex setting.
%%

This paper focuses on the problem
of finding a continuous extension of  the hypercomplex logarithm along  a path.
As pointed out  in \cite{GPV}, %% and in \cite{GPV1,AdF1}
while a branch of the
complex logarithm can be defined in a small open neighborhood of a
strictly negative real point, no continuous branch of the hypercomplex  logarithm
can be defined in any open set $A\subset \bK\setminus \{0\}$ which
contains a strictly negative real point $x_0$ (here $\bK$ represents the algebra of quaternions or of octonions).
%% Furthermore, for
%% the existence of the hypercomplex logarithm of a slice regular function $f$ one need to have
%% a well-defined imaginary unit function $\mathcal{I}(f)$
%% associated with $f$.
%%
%% Indeed, this is crucially requested to define a reasonable notion of {\em argument} of the function $f$, which, in general,  is not uniquely defined
%% or cannot be continuously extended  when $\mathcal{I}(f)$ vanishes and singularities then appear.
%%

To overcome these difficulties, in \cite{GPV} we introduced the {\em logarithmic manifold}
$\mathscr
E_\bK^+$ and then showed that, if  $q\in\bK,\ q=x+Iy$
then  $E(x+Iy) %= (\exp (x + Iy), Iy)
= (\exp x \cos y + I\exp x \sin y, Iy)$
is an immersion and a diffeomorphism between
$\bK$ and $\mathscr E_\bK^+$.

In this paper, we consider lifts of paths in $\bK\setminus\{0\}$ to the logarithmic manifold
$\mathscr{E}^+_\bK$; even though $\bK \setminus \{0\}$ is simply
connected, in general, given a path in $\bK \setminus \{0\}$, the
existence of a lift of this path to $\mathscr{E}^+_\bK$ is not
guaranteed.  There is an obvious equivalence between the problem of
lifting a path in $\bK \setminus \{0\}$ and the one of finding a
continuation of the hypercomplex logarithm $\log_{\bK}$ along this path.

We want to recall that the slice regular logarithm $\log_*(f)$ of a slice regular function $f$ (see
\cite{AdF1,GPV1}) over the quaternions or octonions, introduced as the
slice regular inverse of the slice regular exponential $\exp_*(f)$ of
a slice regular function $f$ (see \cite{AdF}), is not defined in
general via the lift of $f$ to $\mathscr E_\bK^+$.
In particular it turns out that, in general,  $\log_*(f)(q)\neq \log_{\bK}(f(q))$.\\

The paper is organized as follows:
in Sections \ref{sec1} and \ref{sec2}, after recalling the basic
notions on slice regular exponential and logarithmic functions, we
provide explicit examples of paths intersecting the real axis and show
how a branch of the hypercomplex logarithm can be defined along
certain curves even when they encounter the real axis at negative
points, providing a so called \emph{continuation of the logarithm
  along a continuous curve}.

Furthermore, we introduce the notion of path and of loop {\em with
a  companion} (see Subsection \ref{sec31}) and then give a definition of
{\em winding number with respect to $0$} that has  a  full meaning for a
class of %tame
loops in $\mathbb K\setminus\{0\}\simeq
\mathbb{R}^{2^s}\setminus\{0\}\ (s=2,3)$ with companion; this fact is quite novel and original since it is
well known that a definition of winding number for a loop (with
respect to a point) is not in general possible  in  $\mathbb R^n$ when $n$
is greater than $2$.  Moreover this notion of winding number is
invariant  for the class of $c-$homotopic loops with companion.
%considered, basically
%it is invariant after considering a homotopy of
%the loops which makes also the corresponding companions homotopic.

Finally, in the last Section \ref{sec4}, we  extend the
continuation of the hypercomplex logarithm  to the case curves with
an infinite number of intersections with the real axis. %. The corresponding parameters
%% of the curve represent the obstruction
These represent the set of obstructions for such an  extension.
When these obstructions are ``mild'' and ``reasonable'', then we also present an effective way to calculate the winding numbers using
the so-called notion of {\em signature}.

\section{Preliminary results}\label{sec1}

 We denote by $\mathbb{K}$  either the algebra of quaternions or octonions. Let
 $\mathbb{S}$ be the sphere of imaginary units, i.e. the set of
  $I \in \bK$ such that $I^2=-1$.  Given any  $z \in \bK \setminus
 \mathbb{R},$ there exist (and are uniquely determined) an imaginary
 unit $I$, and two real numbers $x,y$ (with $y> 0$) such that
 $z=x+Iy$. With this notation, the conjugate of $z$ will be $\bar z :=
 x-Iy$ and $|z|^2=z\bar z=\bar z z=x^2+y^2$.
 Each imaginary unit $I$ generates (as a real algebra) a copy
 of a complex plane denoted by $\mathbb{C}_I$. We call such a complex
 plane a {\em slice}.  The upper
half-plane in $\mathbb{C}_I$, namely $\{x+yI\ :\ y>0\}$ will be
 denoted by $\C_I^+$.
%%  On each leaf $\C_I^+$ we define  the function $\arg_{I}:\C_I^+\to (0,\pi)$
%%   as $z=x+Iy\in\C_I^+ \mapsto \cot^{-1} (x/y):=\arg_{I}(z)$.
%%
%%
%%    The upper half-plane in $\mathbb{C}_I$,
%%    namely $\{x+yI\ :\ y>0\}$ will be
 %%    denoted by $\C_I^+$  and,
 Similarly, the lower half-plane in $\mathbb{C}_I$
   $\{x+yI\ :\ y<0\}$ will be denoted by
$\C_I^-$; each of these two half--planes will be
called a {\em leaf} of $\C_I$.
 On any leaf $\C_I^+$ we define
  the function $\arg_{I}:\C_I^+\to (0,\pi)$
 as $z=x+Iy\in\C_I^+ \mapsto \cot^{-1} (x/y):=\arg_{I}(z)$.

 %If one consider the closure of $\C_I^+$, i.e. $\C_I^+\cup\R$, then
 The function $\arg_I$ can be continuously extended
 as a function $\arg_{I}:\C_I^+\cup\R^+\cup\R^-\to [0,\pi]$.
%   and $\arg_{I}:\C_I^+\cup\R^-\to (0,\pi]$.

 It is also useful to define the imaginary unit
 function on $\bK \setminus \R$ in the following way:
 if $z \in\C_I^+,$ i.e. if $z=x+Iy$, with $x,y\in\mathbb{R}$ and $y>0$, then
 $\cI (z) = I$;
 if $z \in\C_I^-,$ i.e. if $z=x-Iy$, with $x,y\in\mathbb{R}$ and $y>0$, then
  $\cI (z) = -I$.

%% One can extend the above-given function $\cI$ over the
%% real axis $\mathbb{R}$ by putting $\cI(z)=0$ if $z\in\mathbb{R}$, but
%% the extended function is not continuous.
%%
%%The \emph{imaginary unit function}
%% \[\cI : \H \setminus \R \to \mathbb{S}\, \]
%%is uniquely defined by
%%\[ \cI (q) = \frac{\Im(q)}{|\Im(q)|}
%%\]
%%
%%\noindent Now, for each $I\in \bS$, on each leaf $\C_I^+$ it is possible
%% to define the function \[\arg_{I}:\C_I^+\to (0,\pi)\]
%% \[\C_I^+ \ni z=x+Iy \mapsto \cot^{-1} (x/y):=\arg_{I}(z)\]
%% which can be naturally extended to the positive real axis as a continuous function
%% \[ \arg_{I}:\C_I^+\cup\R^+\to [0,\pi) \]

\begin{remark}\label{principaleasy}
\emph{
It is worthwhile noticing that the function $\cI$ cannot be extended as a continuous function to any single point of the
 real axis $\mathbb{R}$ of $\bK$. At the same time, if we set $\bS(-\pi, \pi)=\{Iy : I\in \bS, \ y\in (-\pi, \pi) \}$, then the function
 \[
 \Arg :\bK \setminus (-\infty, 0] \to \bS(-\pi, \pi)
 \]
 defined as the product
 \[
 \Arg (q):= \cI(q)\arg_{\cI}(q)
 \]
can be extended (as the zero function) to the positive real axis $\R^+$ of $\bK$.}
 \end{remark}

\section{The hypercomplex exponential and logarithm}\label{sec2}

\noindent Let us recall that the  exponential map on $\bK$ %quaternions and octonions
\[
\exp: \bK \to \bK\setminus\{0\}
\]
defined as
\[
\exp(q)=\sum_{k\geq 0} \dfrac{q^{k}}{k!}
\]
is a {\em slice regular} and {\em slice preserving} entire function on $\bK$ (\cite{AdF, GSS}).
Let $\mathscr E_\bK^+=f(\bK^+)$ denote the \emph{logarithm manifold}, i.e., the image $T(\bK^+)$ of $\bK^+=\{q \in \bK : \Re\ q >0\}$ of the map $T: \bK \to \bK\times \Im(\bK)$
defined by
\begin{eqnarray*}
T(x+Iy)=(\sinh x \cos y +I\sinh x \sin y, Iy)
\end{eqnarray*}
for $I\in \bS$, $x, y\in \R$.
The $\mathscr E_\bK^+$-\,exponential map
\[
E : \bK \to \mathscr E_\bK^+ \subset \bK \times \Im(\bK)
\]
defined by:
\[
E(x+Iy) = (\exp (x + Iy), Iy) = (\exp x \cos y + I\exp x \sin y, Iy)
\]
is an immersion and a diffeomorphism between $\bK$ and $\mathscr
E_\bK^+$ (see \cite{GPV}). In the case of quaternions, it endows
$\mathscr E_\H^+$ with a structure of slice quaternionic manifold
(see, e.g., \cite{GGS}), which is different from the structure of
hypercomplex Riemann manifold defined in Propostion 4.3. \cite{GPV})

The next definition and result appear in \cite{GPV}.

\begin{definition} Let $\mathscr E_\bK^+$ be the semi-helicoidal
 hypercomplex manifold.
The $\mathscr E_\bK^+$-\,logarithm
\[
L : \mathscr E_\bK^+ \subset \bK \times \Im(\bK)   \to \bK
\]
 is defined as follows, in terms of the real logarithm  $\log$:
\[
L (q, p) = \log |q| + p
\]
Indeed, if $(q,p)\in \ \mathscr E_\bK^+$, then $q=r\exp p$ for $r=|q|$ and our definition can be rewritten as:
\[
L (r\exp p, p) = \log r + p
\]
\end{definition}
The  hypercomplex manifold
$\mathscr E_\bK^+$ plays the role of an ``adapted" blow-up of $\bK$ at points of the form $x+2Ik\pi$, for $k \in \Z$ and $k\neq 0$.
\begin{proposition}
The map
\[
L : \mathscr E_\bK^+ \to \bK
\]
is the inverse of the $\mathscr E_\bK^+$-\,exponential $E$, and a diffeomorphism from the logarithm manifold $\mathscr E_\bK^+$ to $\bK$.
\end{proposition}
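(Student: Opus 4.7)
The plan is to verify directly, from the explicit formulas, that $L$ is a two-sided set-theoretic inverse of the $\mathscr E_\bK^+$-exponential $E$, and then to deduce that $L$ is a diffeomorphism by invoking the fact (stated just before the Proposition) that $E:\bK\to\mathscr E_\bK^+$ is already a diffeomorphism. So the real content is checking the two identities $L\circ E=\id_\bK$ and $E\circ L=\id_{\mathscr E_\bK^+}$.

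First I would confirm that $L$ is well-defined: by construction any $(q,p)\in\mathscr E_\bK^+$ has $q=r\exp p$ with $r=|q|>0$ (since $\exp$ maps $\bK$ into $\bK\setminus\{0\}$, as recalled at the start of the section), so $\log|q|\in\bR$ makes sense, and $p\in\Im(\bK)$, giving $L(q,p)=\log|q|+p\in\bK$.

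Next I would compute $L\circ E$. For $x+Iy\in\bK$, the first component of $E(x+Iy)$ is $\exp x\cos y+I\exp x\sin y$, whose modulus squared is $\exp(2x)(\cos^2 y+\sin^2 y)=\exp(2x)$. Hence $|\exp(x+Iy)|=\exp x$ and
\[
L(E(x+Iy))=\log(\exp x)+Iy=x+Iy.
\]
For the reverse composition, take $(q,p)\in\mathscr E_\bK^+$; by the very definition of this set, $q=r\exp p$ with $r=|q|$. Writing $p=Iy$ with $I\in\bS$ and $y\in\bR$ (the case $p=0$ is immediate, and in the nonzero case the identity below is insensitive to the sign choice in $p=Iy=(-I)(-y)$), one gets $L(q,p)=\log r+Iy$ and
\[
E(\log r+Iy)=\bigl(\exp(\log r)(\cos y+I\sin y),\,Iy\bigr)=(r\exp(Iy),p)=(q,p).
\]

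Finally, since $E$ has already been shown to be a diffeomorphism between $\bK$ and $\mathscr E_\bK^+$, any two-sided inverse is automatically smooth with smooth inverse, so $L$ is a diffeomorphism. There is no serious obstacle here; the only point that deserves a brief word is the mild ambiguity in the parametrization $p=Iy$, but both the value of $L(q,p)$ and the computation $E(\log r+Iy)=(q,p)$ are invariant under the flip $(I,y)\mapsto(-I,-y)$, so the argument goes through unchanged.
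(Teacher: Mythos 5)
Your proof is correct and is the natural direct verification; the paper itself does not include a proof of this proposition (it cites \cite{GPV} for it), so there is nothing to contrast against. The two identities $L\circ E=\id_\bK$ and $E\circ L=\id_{\mathscr E_\bK^+}$ are checked correctly, and the concluding appeal to the already-established fact that $E$ is a diffeomorphism to transfer smoothness to $L$ is exactly the right move.
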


Note that if $\pro : \bK \times \Im(\bK) \to \bK$ denotes the projection on the first factor, then by definition the following equality holds
\[
\pro \circ E(q)= \exp(q)
\]
for all $q\in \bK$. Indeed, the map $L$ is a slice regular map from  $\mathscr E^+$ to $\H$, with respect to the structure of slice regular manifold induced by $E$ on $\mathscr E_\bK^+$ (see, e.g., \cite{GGS}). This map allows the definition of the hypercomplex logarithm (see \cite{GPV,GPV1}):

\begin{definition}\label{det1log}
Let
$
\pro :  \mathscr E_\bK^+  \subset \bK \times \Im(\bK)  \to \bK\setminus \{0\}
$
denote the natural projection
\[
(q, p) \mapsto q
\]
and let $\Omega \subset \mathscr E_\bK^+$ be a path connected subset such that $\pro_{|_{\Omega}}$ is injective. Then, the map
\[
\log_\bK : \pro(\Omega) \to \bK
\]
defined by
\[
\log_\bK q = L( \pro_{|_{\Omega}}^{-1}(q) )
\]
is called a \emph{branch} or \emph{a determination of  the hypercomplex logarithm} on $\pro(\Omega)$.
\end{definition}
As one can expect, it holds
\[
\exp (\log_\bK q)= \pro (E  (L( \pro_{|_{\Omega}}^{-1}(q) )))=\pro (\pro_{|_{\Omega}}^{-1}(q) )=q
\]
for all $q$ in $\pro(\Omega)$.
It is worthwhile noticing that,
if we consider the open, path-connected subset
\[
\Omega= \{(q, \Arg(q)) : q\in \bK\setminus (-\infty, \pi] \} \subset  \mathscr E_\bK^+ %\subset (\H\setminus (-\infty, \pi])\times \Im \H
,\]
then the projection on the first factor
\[
\pro: \Omega \to \bK\setminus (-\infty,  0]
\]
is injective. Therefore, in this way, one defines
% Definition \ref{det1log} reduces to that of
the {\em principal branch} of the logarithm (see \cite{GV})
%(see Proposition \ref{principal branch}) on the entire
in $\pro(\Omega)= \bK\setminus (-\infty, \pi]$ (see \cite{GPV1, AdF1}).
 The principal branch of the hypercomplex logarithm
 \[
  \log_{0} : \bK \setminus (-\infty, 0] \to \R  \times [0,\pi) \bS  \subset \bK
 \]
 \[
 q \mapsto \log|q| + \Arg (q)
 \]
 is well defined and, for all $I\in \bS$, coincides with the principal branch of the complex logarithm in the slice $\C_I \setminus  (-\infty, 0] $.
   As a consequence, $\log_{0}$ is a slice regular function in the symmetric slice domain $\bK \setminus  (-\infty, 0] $.

%
%     As it is extensively
%     explained  in \cite{GPV} and in \cite{GPV1,AdF1}, while a branch of the
%complex logarithm can be defined in a small open neighborhood of a
%strictly negative real point, no continuous branch of the hypercomplex  logarithm
%can be defined in any open set $A\subset \bK\setminus \{0\}$ which
%contains a strictly negative real point $x_0$.
%

 As already observed in the Introduction, despite the analogy with the
 complex holomorphic case, in general no continuous branch of the
 hypercomplex logarithm can be defined in any open set $A\subset
 \bK\setminus \{0\}$ which contains a strictly negative real point
 $x_0$.  Nevertheless, we will now see how a branch of the
   hypercomplex logarithm can be defined along certain curves even
   when they encounter the real axis at negative points, providing a
   so called \emph{continuation of the logarithm along a continuous
     curve}.

Throughout the paper, a continuous curve will be called a \emph{ path}, and a closed path will be called a \emph{loop}.

  \begin{definition}\label{continulog}
Let $\gamma : [a,b] \ra \bK \setminus \{0\}$ be a path. Then a  path $\widetilde\gamma: [a,b] \ra \bK$ is called {\em a continuation of the logarithm along $\gamma$ } if $$\exp\circ\widetilde\gamma = \gamma,$$ i.e., if the following diagram commutes:
$$\xymatrix
  {&  &   \ar[d]^{\exp}  \bK   \\
    \ar[urr]^{\widetilde\gamma}
    [a,b]   \ar[rr]^{\gamma}  & & \bK \setminus\{0\}}
$$
The point $\widetilde\gamma(a)\in \bK$ will be called the initial point of the continuation $\widetilde \gamma$.
\end{definition}

To study the possible continuations of the logarithm along a path,  we need to specifically define the various branches of the hypercomplex argument of an element from $\bK \setminus \{0\}$.
\begin{definition}\label{arg_2l arg_2l+1}
If $k \in \Z$, for all $q\in  \bK \setminus \R$, $q=x+\cI y$ with $y>0$, let us define for %$\Arg_k: \bK \setminus \R \rightarrow \bS(k\pi,(k+1)\pi)$ by setting,
%\begin{enumerate}
%\item for $k=2l$
%\[
%\begin{array}{lll}
%\cI_{2l}(q) = \cI(q), &\,& \arg_{2l}(q) = \arg_{\cI}(q) + 2l\pi, \label{DefBranches}
% \end{array}
%\]
%where $\arg_{2l}(q)\in (2l\pi, (2l+1)\pi)$,
%\item for $k=2l+1$
%\[
%\begin{array}{lll}
%\cI_{2l+1}(q) = -\cI(q),& \,&  \arg_{2l+1}(q) = (2\pi -\arg_{\cI}(q))  + 2l\pi, %\in ((2l+1)\pi, (2l+2)\pi),.
% \end{array}
%\]
%where $\arg_{2l+1}(q)\in ((2l+1)\pi, (2l+2)\pi))$.
%\end{enumerate}
\[
\begin{array}{ll}
  k=2l:  & \cI_{2l}(q) = \cI(q), \,\, \arg_{2l}(q) = \arg_{\cI}(q) + 2l\pi, \label{DefBranches}\\
  & \mbox{where }  \arg_{2l}(q)\in (2l\pi, (2l+1)\pi),\\
&  \\
  k=2l+1: &  \cI_{2l+1}(q) = -\cI(q),\,\, \arg_{2l+1}(q) = (2\pi -\arg_{\cI}(q))  + 2l\pi,\\
 & \mbox{where } \arg_{2l+1}(q)\in ((2l+1)\pi, (2l+2)\pi).
 \end{array}
\]
The $k$-th branch of the hypercomplex argument
$$\Arg_k: \bK \setminus \R \rightarrow \bS(k\pi,(k+1)\pi)$$
is defined by setting
\[
\Arg_k(q):= \cI_k(q)\arg_k(q).
\]
\end{definition}
As a consequence,
\begin{eqnarray}
\Arg_{2l+1}(q)&=&-\cI(q)(2\pi -\arg_{\cI}(q)+ 2l\pi)\\ \nonumber
&=& \cI(q)(\arg_{\cI}(q) -2(l+1)\pi)\\ \nonumber
&=& \cI(q)\arg_{-2(l+1)}(q)\\ \nonumber
&=& \Arg_{-(2l+2)}(q) \nonumber
\end{eqnarray}
Therefore, the only different branches of the hypercomplex argument of a quaternion $q\in  \bK \setminus \R$, $q=x+\cI y$ with $y>0$, can be listed for $k\in \Z$ as
\[
\Arg_{2k}(q):= \cI(q)\arg_{2k}(q).
\]
It is worthwhile noticing that for any fixed $q\in  \bK \setminus \R$, we have that for all $k\in \Z$
\[
\exp(\Arg_{2k}(q))=\exp(\Arg(q));
\]
indeed
\begin{eqnarray*}
\exp(\Arg_{2l}(q))&=&\exp[{\cI}(q)(\arg_{\cI}(q)+2l\pi)]=\exp[{{\cI}(q)\arg_{\cI}(q)}]\\
&=&\exp(\Arg(q)).
\end{eqnarray*}

%and if $k=2l+1$
%\[
%\exp(\Arg_{2l+1}(q))=\exp[{-\cI}(q)((2\pi-\arg_{\cI}(q)+2l\pi))=\exp[{{\cI}(q)\arg_{\cI}(q)}]
%\]

% and define $\Omega_k$ to be its graph,
% \[
%   \Omega_k:= \{(q,\Arg_k(q)) : q \in \H \setminus \R\} \subset \mathcal{E}^+.
%\]
%and by $\tilde{\Omega}_k$ we denote the closure of $\Omega_k$ in $\mathcal{E}^+.$
%The projection  $\pi_k: \Omega_k \to \pi(\Omega_k) =\bK \setminus \R$  is injective and hence, following Definition \ref{det1log}, we state
%\begin{definition}\label{det1logk}
%     The $k$-th branch of the hypercomplex logarithm on $\bK\setminus \R$  is given by
%      \[
%\log_{k}(q) := L(\pi_{k}^{-1}(q)) = \log|q| + \Arg_{k}(q) = \log|q|+\arg_k(q)\cI_k(q).
%      \]
%      \end{definition}
%
%\begin{remark}{\em Notice that $\log_{k} = \log_{-(k+1)},$ but $\cI_k(q) = -\cI_{-(k+1)}(q)$ on $\bK \setminus \R.$}
%\end{remark}

\section{Continuation of hypercomplex logarithms along paths}\label{sec3}

The construction of a continuation of the logarithm along a path naturally involves the notion of  a  lift of a path.

\begin{definition}\label{LiftCurve}
Let $\gamma : [a,b] \ra \bK \setminus \{0\}$ be a path. Then a path
$\Gamma: [a,b] \ra \mathcal{E}_\bK^+$ is
{\em a lift of $\gamma$ (to  $\mathcal{E}_\bK^+$)} if
$\pro\circ \Gamma = \gamma$, i.e., if the
the following diagram commutes:
$$\xymatrix
  {&  &   \ar[d]^{\pro}  \mathcal{E}_\bK^+   \\
    \ar[urr]^{\Gamma}
    [a,b]   \ar[rr]^{\gamma}  & & \bK \setminus\{0\}}
$$
For $(q,p) \in \mathcal{E}_\bK^+$, a lift $\Gamma$ of $\gamma$ such that $\Gamma (a)=(q,p)$ will be said to have \emph{initial point} $(q,p)$.
\end{definition}
The existence of a lift of a path $\gamma$ is equivalent to the existence of a continuation of the hypercomplex logarithm along it.\\

\begin{proposition}\label{first lemma}
Let  $\gamma : [a,b]\subset\bR \ra \bK \setminus \{0\}$ be a  path.
%%
%%$\gamma(t) =x(t)+Y(t)$
%%where $x(t)= (\gamma(t) + \ol{\gamma(t)})/2$ is the real part of $\gamma$
%%and $Y(t): = \gamma(t) - x(t)$ is the imaginary part of $\gamma$.
%%We recall that  if $\gamma(t)\notin\mathbb{R}$, then
%%$\gamma(t)=x(t)+y(t)\mathcal{I}(\gamma(t))$ so that if  $\gamma(t)\notin\mathbb{R}$ then
%%$Y(t)=y(t)\mathcal{I}(\gamma(t))$.\\
%%
%%
Then, there exists a lift of $\gamma$ to $\mathcal{E}_\bK^+$ if, and only if, there exists a continuation  of the logarithm along $\gamma$.
\end{proposition}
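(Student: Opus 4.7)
The plan is to exploit the fact, recalled just before the proposition, that $E:\bK\to\mathcal{E}_\bK^+$ is a diffeomorphism with inverse $L$, and that the first-factor projection satisfies $\pro\circ E=\exp$. With these two ingredients the proposition is really just a pair of diagram chases, one in each direction.

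First I would handle the implication ``continuation $\Rightarrow$ lift''. Given a continuation $\widetilde\gamma:[a,b]\to\bK$ of $\log$ along $\gamma$, I set
\[
\Gamma := E\circ \widetilde\gamma:[a,b]\longrightarrow \mathcal{E}_\bK^+.
\]
Continuity of $\Gamma$ is immediate since $E$ is (smooth, hence) continuous and $\widetilde\gamma$ is continuous. The identity
\[
\pro\circ\Gamma=\pro\circ E\circ\widetilde\gamma=\exp\circ\widetilde\gamma=\gamma
\]
shows that $\Gamma$ is a lift of $\gamma$ in the sense of Definition \ref{LiftCurve}.

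Next I would handle the converse ``lift $\Rightarrow$ continuation''. Given a lift $\Gamma:[a,b]\to\mathcal{E}_\bK^+$ of $\gamma$, I set
\[
\widetilde\gamma := L\circ\Gamma:[a,b]\longrightarrow\bK,
\]
which is continuous because $L$ is continuous. Using that $L=E^{-1}$ and $\pro\circ E=\exp$, I get
\[
\exp\circ\widetilde\gamma=\exp\circ L\circ\Gamma=\pro\circ E\circ L\circ\Gamma=\pro\circ\Gamma=\gamma,
\]
so $\widetilde\gamma$ is a continuation of the logarithm along $\gamma$ in the sense of Definition \ref{continulog}.

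Since both assignments $\widetilde\gamma\mapsto E\circ\widetilde\gamma$ and $\Gamma\mapsto L\circ\Gamma$ are mutually inverse (because $E$ and $L$ are), the correspondence is actually a bijection between continuations of $\log$ along $\gamma$ and lifts of $\gamma$ to $\mathcal{E}_\bK^+$; in particular, the initial point $\widetilde\gamma(a)\in\bK$ corresponds to the initial point $E(\widetilde\gamma(a))=\Gamma(a)\in\mathcal{E}_\bK^+$, which is useful for the later uniqueness/monodromy discussion. There is essentially no obstacle in this argument: the entire content is packaged into the already-established facts that $E$ is a diffeomorphism and intertwines $\exp$ with $\pro$, so no case analysis on whether $\gamma$ meets the real axis is needed here — all the subtlety of negative real crossings is hidden inside the construction of $\mathcal{E}_\bK^+$ itself and will surface only when one tries to produce a specific $\Gamma$.
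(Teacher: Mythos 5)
Your argument is correct and is essentially the paper's proof, just phrased slightly more abstractly: where you write $\Gamma:=E\circ\widetilde\gamma$, the paper writes out $E\circ\widetilde\gamma$ in components as $\Gamma(t)=((\exp\circ\widetilde\gamma)(t),\Im(\widetilde\gamma(t)))$, and both use $\widetilde\gamma:=L\circ\Gamma$ for the converse. The extra remark that the two assignments are mutually inverse (and so match initial points) is a harmless and useful addition, not present in the paper but implicit.
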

\begin{proof} Suppose that there exists  a continuation of the logarithm $\widetilde \gamma$ along $\gamma$. Then
the path $\Gamma$ defined by $\Gamma(t) = ((\exp\circ \widetilde\gamma)(t), \Im(\widetilde\gamma(t))$ is obviously a lift of $\gamma$ to $\mathcal{E}_\bK^+$.

%\marginpar{\color{red} Not sure the notation $\Im$ has been introduced....}
\vskip .2cm
$$\xymatrix
  {&  &   \ar[d]^{\pro}  \mathcal{E}_\bK^+  \ar[rr]^{L} & & \bK  \ar[lld]^{\exp} \\
    \ar[urr]^{\Gamma}
    [a,b]   \ar[rr]^{\gamma}  & & \bK \setminus\{0\}}
$$
\vskip .2cm
Conversely, if a lift $\Gamma$ of the path $\gamma: [a,b] \to \bK \setminus \{0\}$ to $\mathcal{E}^+_\bK$
exists, then a continuation of the hypercomplex logarithm along $\gamma$ can be defined by
$\widetilde\gamma(t):= L(\Gamma(t))$.
\end{proof}

Thanks to the result just stated, we are left to find conditions under
which a path $\gamma: [a,b] \to \bK \setminus \{0\}$ can be lifted to
$\mathcal{E}^+_\bK$. Since the map $\pro: \mathcal{E}_\bK^+ \to
\bK\setminus\{0\}$ is not a covering, we have to specifically
study the existence of lifts of $\gamma$.

Let us first consider the easy cases: it is not difficult to see that
if we restrict the map $\pro: \mathcal{E}_\bK^+ \to \bK\setminus\{0\}$
to the preimage of $\bK\setminus \R$, then the restriction
$\pro_{|\pro^{-1}(\bK\setminus \R)}$ becomes a covering. Indeed it
becomes a trivial covering, since $\pro^{-1}(\bK\setminus \R)$ is
homeomorphic (namely diffeomorphic) through the diffeomorphism
\[
E : \bK \to \mathscr E_\bK^+ %\subset \bK \times \Im(\bK)
\]
to the countable collection of open simply connected domains given by
$$
\bK \setminus \left\{\bigcup_{k\in \Z}\R\times \bS k\pi\right\}=\bigcup_{k\in \Z} \R\times \bS(k\pi, (k+1)\pi).
$$
Let us now set, for any $k\in \Z$,
\[
D_k= \R\times \bS(k\pi, (k+1)\pi), \qquad E(D_k)= E(\R\times \bS(k\pi, (k+1)\pi))\subset \mathscr E_\bK^+ .
\]
 Notice that $\bS(k\pi, (k+1)\pi)=\bS(-(k+1)\pi, -k\pi))$ and hence $D_{2k} = D_{-(2k+1)}.$
With this in mind, we can now state the following proposition.

\begin{proposition}\label{easylifts}
Assume the path $\gamma:[a,b] \ra \bK \setminus \{0\}$ is
such that $\gamma([a,b])\cap \mathbb{R}=\varnothing$, and let
\[
\gamma(t) = x(t)+{\cI}(t)y(t)
\]
with $y(t)>0$ for all $t\in [a,b]$. Then, for any $k\in\Z$, there exists one, and only one, lift $\Gamma_{k}$ of $\gamma$ to $E(D_{2k})\subset \mathscr E_\bK^+$ with initial point
$$\Gamma_{k}(a)=(\gamma(a), \Arg_{2k}(\gamma(a))).$$
Namely, for all $t\in [a,b]$, we have
\[
\Gamma_{k}(t) := (\gamma(t), \Arg_{2k}(\gamma(t))) \in E(D_{2k}).
\]
Finally, for all $k\in \Z$, the map defined on the interval $[a,b]$ by
\begin{equation}\label{-kth branch on K-R}
(\log_{k}\circ \gamma)(t) := (L\circ\Gamma_{k})(t) = \log|\gamma(t)| + \Arg_{2k}(\gamma(t))
\end{equation}
is the unique continuation of the hypercomplex logarithm along $\gamma$ with initial point $\log|\gamma(a)| + \Arg_{2k}(\gamma(a))$, and is called  \emph{the $k$-th branch of the hypercomplex logarithm along $\gamma$ with initial point $\log|\gamma(a)| + \Arg_{2k}(\gamma(a))$.}
\end{proposition}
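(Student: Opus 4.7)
The plan is to invoke Proposition~\ref{first lemma}, so that constructing a continuous lift $\Gamma_k : [a,b] \to E(D_{2k})$ suffices. The key structural fact to exploit is that $E : \bK \to \mathscr E_\bK^+$ is a diffeomorphism, hence $E|_{D_{2k}} : D_{2k} \to E(D_{2k})$ is a diffeomorphism and in particular $\pro|_{E(D_{2k})}$ is injective on its image. Combined with the hypothesis that $\gamma$ avoids $\R$ and lies in the upper-leaf region $\{y > 0\}$, this will yield both existence and uniqueness of the lift, essentially from the definitions.

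For existence, I would define
\[
\Gamma_k(t) := (\gamma(t), \Arg_{2k}(\gamma(t)))
\]
and verify membership in $E(D_{2k})$ by writing the explicit formula
\[
\Gamma_k(t) = E\bigl(\log|\gamma(t)| + \cI(\gamma(t))\bigl(\arg_{\cI}(\gamma(t)) + 2k\pi\bigr)\bigr).
\]
The argument of $E$ belongs to $D_{2k}$ because $y(t) > 0$ forces $\arg_{\cI}(\gamma(t)) \in (0,\pi)$, so $\arg_{\cI}(\gamma(t)) + 2k\pi \in (2k\pi, (2k+1)\pi)$. The relations $\pro \circ \Gamma_k = \gamma$ and $\Gamma_k(a) = (\gamma(a), \Arg_{2k}(\gamma(a)))$ are then immediate from the definition.

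The one step that requires genuine checking is continuity of $\Gamma_k$, which reduces to continuity of $\Arg_{2k}\circ\gamma$. On $\bK \setminus \R$ the unit-imaginary map $\cI(q) = \Im q / |\Im q|$ is continuous; on the upper-leaf region one has $\arg_{\cI}(q) = \cot^{-1}(\Re q / |\Im q|)$, again continuous. Since $\gamma$ stays in this region for all $t$, both $\cI \circ \gamma$ and $\arg_{\cI}\circ\gamma$ are continuous, and hence so is $\Arg_{2k}\circ\gamma = (\cI\circ\gamma)(\arg_{\cI}\circ\gamma + 2k\pi)$. I expect this to be the only substantive obstacle: without the hypotheses $\gamma([a,b]) \cap \R = \varnothing$ and $y(t) > 0$, the function $\cI$ could flip sign as $\gamma$ approached or crossed $\R$, and continuity of the argument branch would fail.

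For uniqueness, any lift of $\gamma$ valued in $E(D_{2k})$ must, at every $t$, project under $\pro$ to $\gamma(t)$; injectivity of $\pro|_{E(D_{2k})}$ forces such a lift to coincide with $\Gamma_k$. Finally, applying $L$ and using $L(q,p) = \log|q| + p$ gives
\[
(L\circ\Gamma_k)(t) = \log|\gamma(t)| + \Arg_{2k}(\gamma(t)),
\]
which by Proposition~\ref{first lemma} is the unique continuation of $\log_\bK$ along $\gamma$ with the prescribed initial value, completing the proof.
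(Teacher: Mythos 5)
Your proof is correct and follows essentially the same route as the paper: define $\Gamma_k$ explicitly, check that it lands in $E(D_{2k})$ via the branch of the argument, use injectivity of $\pro$ restricted to (the graph in) $E(D_{2k})$ for uniqueness, and then apply Proposition~\ref{first lemma} together with $L$ to get the logarithm branch. The only difference is that you spell out the continuity check for $\Arg_{2k}\circ\gamma$ and the verification $\Gamma_k = E\circ(\log|\gamma| + \cI(\gamma)(\arg_{\cI}(\gamma)+2k\pi))$ explicitly, which the paper leaves implicit as ``a straightforward consequence of what already established''.
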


\begin{proof}
For each $k\in \Z$, the proof of the existence and uniqueness of $\Gamma_k$ as in the statement is a straightforward consequence of what already established. To prove the last part of the statement, let us consider the graph $\Omega_k$ of  the lift $\Gamma_{k}$ of $\gamma$ to $E(D_{2k})\subset \mathscr E_\bK^+$ with initial point $\Gamma_{k}(a)=(\gamma(a), \Arg_{2k}(\gamma(a)))$, i.e.,
 \[
   \Omega_k:= \{(q,\Arg_{2k}(q)) : q \in \gamma([a,b])\} \subset E(D_{2k})\subset  \mathcal{E}^+_\bK.
\]
Since the projection  $\pi: \mathcal{E}^+_ \bK \to \bK \setminus \{0\}$ restricted to $\Omega_k$   is injective, following Definition \ref{det1log}, we obtain \eqref{-kth branch on K-R}.
\end{proof}

 Under the hypotheses of the preceding proposition, loops lift to loops, hence:
\begin{corollary}
Assume the loop $\gamma:[a,b] \ra \bK \setminus \{0\}$ is
such that $\gamma([a,b])\cap \mathbb{R}=\varnothing$. Then for each $k\in \Z$, the lift $\Gamma_k$ found in Proposition \ref{easylifts} is a loop. As a consequence, for each $k\in \Z$,
$$\log_k(\gamma(a))=\log_k(\gamma(b))$$
% [Hence $\gamma$ has winding number equal to $0$] (for later use)
\end{corollary}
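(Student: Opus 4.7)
The plan is to appeal directly to the explicit pointwise formula
\[
\Gamma_k(t) \;=\; (\gamma(t),\,\Arg_{2k}(\gamma(t)))
\]
for the lift that is provided by Proposition \ref{easylifts}. The crucial observation is that this formula depends only on the instantaneous value $\gamma(t)$, with no dependence on the earlier history of the path. Consequently, once the formula is known to produce a well-defined continuous curve into $E(D_{2k}) \subset \mathscr E_\bK^+$, the loop condition on $\gamma$ transfers automatically to the lift.

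First I would verify, as a preparatory remark, that $\Arg_{2k}$ is continuous on the image of $\gamma$. Since $\gamma([a,b]) \subset \bK \setminus \R$, for every $t$ we have $\gamma(t) = x(t) + \cI(t) y(t)$ with $y(t) > 0$, so both $\cI(\gamma(t))$ and $\arg_{\cI}(\gamma(t))$ depend continuously on $t$, and hence so does $\Arg_{2k}(\gamma(t)) = \cI(\gamma(t))\bigl(\arg_{\cI}(\gamma(t)) + 2k\pi\bigr)$. This is precisely what Proposition \ref{easylifts} relies upon to guarantee that $\Gamma_k$ lands in $E(D_{2k})$.

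Next I would use the loop hypothesis $\gamma(a) = \gamma(b)$. Substituting both endpoints into the formula gives
\[
\Gamma_k(a) \;=\; (\gamma(a),\Arg_{2k}(\gamma(a))) \;=\; (\gamma(b),\Arg_{2k}(\gamma(b))) \;=\; \Gamma_k(b),
\]
so $\Gamma_k$ is itself a loop in $\mathscr E^+_\bK$. Finally, applying the diffeomorphism $L: \mathscr E^+_\bK \to \bK$ and recalling that, by definition of the $k$-th branch along $\gamma$, $\log_k(\gamma(t)) = L(\Gamma_k(t))$, the equality $\Gamma_k(a) = \Gamma_k(b)$ yields $\log_k(\gamma(a)) = \log_k(\gamma(b))$.

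I do not expect a genuine obstacle here: the content of the corollary is essentially a corollary of the pointwise nature of the formula in Proposition \ref{easylifts}. What makes the statement non-vacuous is precisely that, away from the real axis, no nontrivial monodromy can occur, because the ``correct sheet'' of $\mathscr E^+_\bK$ is selected from the instantaneous position of $\gamma(t)$ rather than from the path traveled — in sharp contrast with the complex setting, where the analogous statement would fail without further restrictions on $\gamma$. This feature will break down as soon as $\gamma$ is allowed to cross $\R$, which is what motivates the rest of the section.
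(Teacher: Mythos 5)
Your proof is correct and takes the same route as the paper, which dispatches the corollary with a single remark that "loops lift to loops" under these hypotheses; the substance is precisely the pointwise nature of the formula $\Gamma_k(t)=(\gamma(t),\Arg_{2k}(\gamma(t)))$, which forces $\Gamma_k(a)=\Gamma_k(b)$ from $\gamma(a)=\gamma(b)$, and then applying $L$ gives the equality of logarithms. Your closing observation about the contrast with the complex case and the failure once $\gamma$ meets $\R$ is a correct reading of why the proposition is stated only off the real axis.
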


Among the initial cases, there is the one corresponding to what is stated in Remark \ref{principaleasy}.
%\marginpar{??????????????????????}

\begin{proposition}\label{k meno R meno}
Assume the path $\gamma:[a,b] \ra \bK \setminus \{0\}$ is
such that $\gamma([a,b])\cap \mathbb{R}^-=\varnothing$.  Then there
exists a lift $\Gamma$ of $\gamma$ to $\mathcal{E}^+_\bK$.
%In particular, if  $\gamma([a,b])\cap \mathbb{R}=\varnothing$, then there are {\color{brown} infinitely many}
%  lifts of $\gamma$ to $\mathcal{E}^+_\bK$. %whereas if $\gamma([a,b])\cap \mathbb{R}^-=\varnothing$ and $\gamma([a,b])\cap \mathbb{R}^+\ne \varnothing$ then
  %there is at least one.
\end{proposition}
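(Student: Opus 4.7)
The plan is to exhibit an explicit lift using the principal branch of the hypercomplex logarithm. Since $\gamma([a,b])\subset\bK\setminus\{0\}$ and $\gamma([a,b])\cap\R^-=\varnothing$, the image is contained in $\bK\setminus(-\infty,0]$, which is precisely the domain of both the principal branch $\log_0$ and of the argument function $\Arg$.

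First I would define the candidate lift
$$\Gamma(t):=(\gamma(t),\,\Arg(\gamma(t)))\in\bK\times\Im(\bK),$$
and verify three items: (i) $\Gamma(t)\in\mathscr E_\bK^+$ for all $t\in[a,b]$, (ii) $\pro\circ\Gamma=\gamma$, and (iii) $\Gamma$ is continuous. Item (i) follows from the identity $\Gamma(t)=E\bigl(\log|\gamma(t)|+\Arg(\gamma(t))\bigr)$, which exhibits $\Gamma(t)$ as a point in the image of $E$, namely in $\mathscr E_\bK^+$. Item (ii) is immediate by construction.

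The only nontrivial step is (iii), which reduces to the continuity of $\Arg$ on $\bK\setminus(-\infty,0]$. Off the real axis this is clear since $\Arg=\cI\,\arg_{\cI}$ is a product of continuous functions, so the main obstacle to anticipate is the case in which $\gamma$ crosses $\R^+$: at positive real points $\cI$ itself admits no continuous extension. This is handled precisely by Remark \ref{principaleasy}: the product $\Arg$ does extend continuously to $\R^+$ as the zero function, because $\arg_{\cI}(q)\to 0$ as $q$ approaches a strictly positive real number, hence $\cI(q)\arg_{\cI}(q)\to 0$ regardless of the (non-convergent) behaviour of $\cI(q)$. Therefore $\Arg$ is continuous on the whole of $\bK\setminus(-\infty,0]$, so $\Arg\circ\gamma$ is continuous on $[a,b]$, and so is $\Gamma$.

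Equivalently, one could invoke Proposition \ref{first lemma}: it suffices to produce a continuation of the logarithm along $\gamma$, and the composition $\widetilde\gamma(t):=\log_0(\gamma(t))=\log|\gamma(t)|+\Arg(\gamma(t))$ serves as one since $\exp\circ\log_0=\id$ on $\bK\setminus(-\infty,0]$. The corresponding lift is exactly the $\Gamma$ written above. The conceptual obstacle is therefore nothing more than the passage through $\R^+$, which is already defused by the continuous extension of $\Arg$ to the positive real axis.
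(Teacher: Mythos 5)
Your proposal is correct and rests on exactly the same ingredient as the paper's proof: Remark \ref{principaleasy}, which gives the continuous extension of $\Arg$ to $\bK\setminus(-\infty,0]$. The paper phrases this by saying that $\pro:E(\R\times\bS(-\pi,\pi))\to\bK\setminus(-\infty,0]$ is a homeomorphism, and your explicit $\Gamma(t)=(\gamma(t),\Arg(\gamma(t)))$ is precisely the composition of $\gamma$ with the inverse of that homeomorphism, so the two arguments are the same, yours simply spelled out pointwise.
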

\begin{proof}
The proof is a consequence of what observed in Remark \ref{principaleasy}. Indeed, taking into account that $\bS(-\pi, \pi)=\bS[0, \pi)=\bS(-\pi, 0]$, the mentioned remark implies that $\pi:E( \R \times \bS(-\pi, \pi)) \to  \bK \setminus (-\infty, 0]$ is a homeomeorphism.
\end{proof}

As pointed out in the Introduction, even though $\bK \setminus \{0\}$ is simply connected, in general, given a  path in  $\bK \setminus \{0\}$, the existence of a  lift of this path to $\mathcal{E}^+_\bK$ is not guaranteed. Indeed, consider the following examples.
\begin{example}\label{esempio1}
\emph{\begin{enumerate}[(a)]
\item Let $\sigma:[\pi/2, 3\pi/2] \to \bK \setminus \{0\}$ be the path (depicted in Figure \ref{GArc}) defined by
$$\sigma(t)=\cos(t)+I(t)\sin(t)$$
where $I: [\pi/2, 3\pi/2] \to \bS$ is defined as
\[
\,\,\,\,\,I(t)=i \quad \textnormal{for}\quad  \pi/2\leq t < \pi \quad  \textnormal{and } \quad  I(t)=-j \quad  \textnormal{for}\quad \pi\leq t \leq 3\pi/2.
\]
The curve $\sigma$ is continuous, but the function
$$\Arg(\sigma(t))=I(t)\arg_I(\sigma(t))$$
is not continuous at $\pi$ (the left and right limits are different). Therefore $\sigma$ cannot be lifted to $\mathcal{E}^+_\bK$.
\begin{figure}[h!]
\centering
 \includegraphics[width=0.45\textwidth]{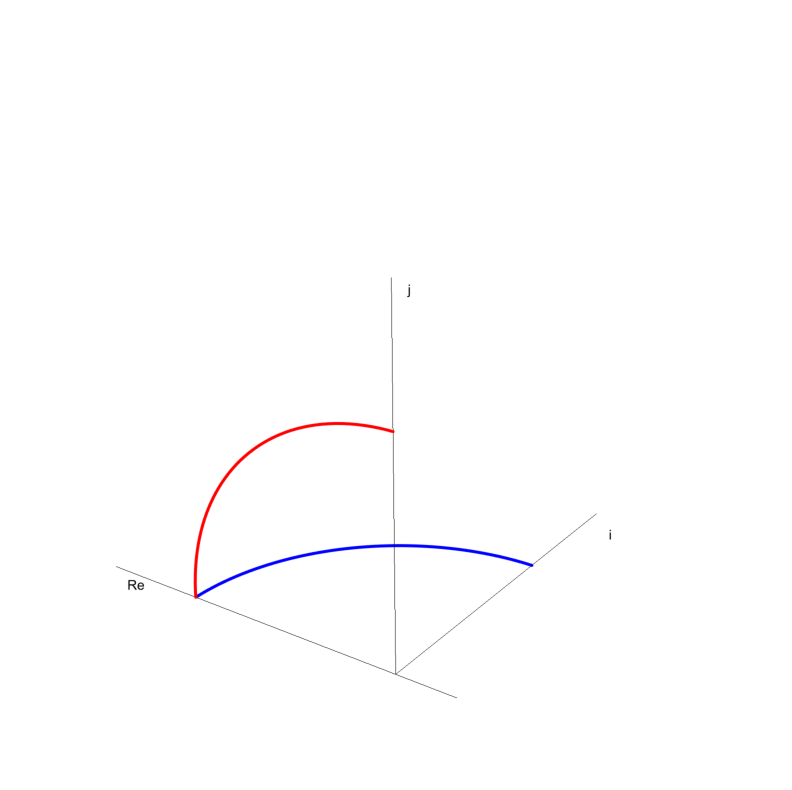}
 \begin{center}\caption{\footnotesize The arc $\sigma\qquad\qquad\qquad$  }
 \label{GArc}
 \end{center}
\end{figure}
\\
\item Consider now the loop $\gamma:[0,1] \to \bK \setminus \{0\}$ defined by
      $$
  \gamma(t) = \cos(\pi-2 \pi t) +  t(1-t)(i \cos(2\pi/t) + j \sin(2\pi /t)),
    $$
where $i,j$ are the usual orthogonal imaginary units (see Figure \ref{ri3p}).  Notice that the imaginary part of $\gamma$ is continuous at all points of the interval $[0,1]$ (including $0$). Nevertheless, for $t$ near to $0$, we have that the function
\begin{eqnarray*}
 &&\Arg(\gamma(t))=\\
 &&=(i \cos(2\pi/t) + j \sin(2\pi /t))\arccos\left(\frac{\cos(\pi-2 \pi t)}{\sqrt{\cos^2(\pi-2 \pi t)+t^2(1-t)^2}}\right)
\end{eqnarray*}
has no limit for $t$ approaching $0^+$. Therefore $\gamma$ cannot be lifted to $\mathcal{E}^+_\bK$.\\
\item Notice that in both the preceding cases, the paths  $\hat\sigma:=-\overline{\sigma}$ and $\hat\gamma:=-\overline{\gamma}$ can be lifted to $\mathcal{E}^+_\bK$, since their images are included in $\bK \setminus (-\infty, 0]$ (see Proposition \ref{k meno R meno}).
\end{enumerate}
 }
\end{example}
 \begin{figure}[h!]
\begin{center}
 {\includegraphics[width=0.45\textwidth]{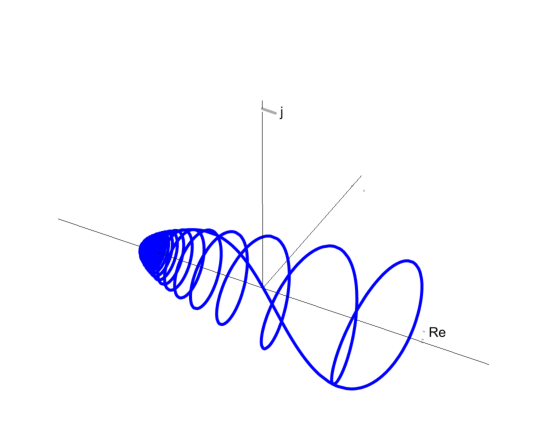}}\quad
 {\includegraphics[width=0.45\textwidth]{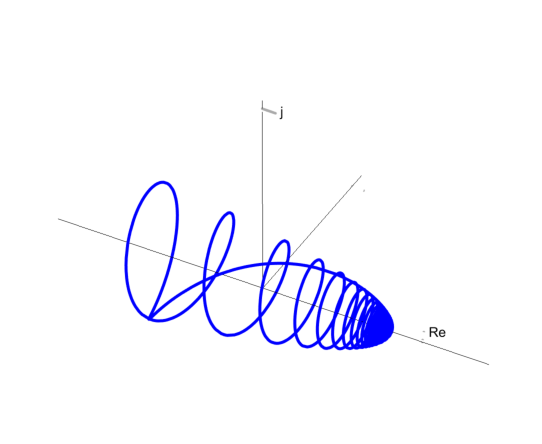}}
  \end{center}
 \caption{\footnotesize The path $\gamma$ (negative rocket) of the Example \ref{esempio1} (b) is drawn on the left: it cannot be lifted to $\mathcal{E}^+_\bK$. Its reflection on the right (positive rocket) can be lifted to $\mathcal{E}^+_\bK$. }
 \label{ri3p}
\end{figure}

It is useful to point out that the existence of a lift $\Gamma$ of a path $\gamma$ to $\mathcal{E}_\bK^+$
is equivalent to the existence of a continuous function
$\Arg^{\gamma}: [a,b] \ra \Im(\H)$, such that
\[ \Gamma(t) = (\gamma(t), \Arg^{\gamma}(t)) \in \mathcal{E}^+_\bK.\]
%Then
%\[\exp(L(\Gamma(t)))=\exp(\log|\gamma(t)| + \Arg^{\gamma}(t)) = \gamma(t).
%\]
%\marginpar{{\color{red} Maybe we should reverse the order: if $\mathcal I$ exists, then we decompose. And maybe add that if lift exists, and $\mathcal I$ exists then $\arg$ is defined} }
As noticed in Remark \ref{principaleasy}, the function $\Arg^{\gamma}$ will be decomposed, where possible, with obvious notation, as
\[\Arg^{\gamma}:= \cI^{\gamma}\arg^{\gamma} \]
where $\cI^{\gamma}:[a,b] \ra \bS$ and $\arg^{\gamma}: [a,b] \ra \R$.  The existence of
$\cI^{\gamma}:[a,b] \ra \bS$ implies that we can assign to each
$t\in [a,b]$ a complex plane $\C_{\cI^{\gamma}}$ which contains the
point $\gamma(t)$ and hence determines the argument up to a multiple of
$2\pi.$

Complex slices $\{\C_J\}_{J\in\mathbb{S}}$ are naturally parameterized by the elements of $\bS/ \{\pm \Id\}$, the real projective space $\R \P^{\dim_{\R}\bK - 2}$ of dimension $\dim \bK -2$.
The projection $[\ ]: \bS \ra \bS/ \{\pm \Id\}=\R
\P^{\dim_{\R}\bK - 2}$ is the classical $2:1$ universal covering map and, as customary, for $J\in \bS$, the symbol $[J]$ denotes the equivalence
class whose representatives are the opposite (conjugate) imaginary
units $J, -J\in \bS.$ Each element $[J] \in \bS/ \{\pm \Id\}$ uniquely defines the
complex slice $\C_{[J]} =\C_J=\C_{-J}$. A
continuous imaginary unit function $\cI^{\gamma}: [a,b] \to \bS$ naturally defines a continuous function $\mathfrak{I}^{\gamma}: [a,b] \to \bS/ \{\pm \Id\}$ when we set $\mathfrak{I}^{\gamma}(t) =
[\cI^{\gamma}(t)].$

\begin{definition}.
Let  $[a,b] \subset \bR$ and let $\gamma : [a,b] \ra \bK \setminus \{0\}$ be a  path.

A path  $\mathfrak{I}^{\gamma}:[a,b] \ra \bS/ \{\pm \Id\}$ such that $\gamma(t) \in
  \C_{\mathfrak{I}^{\gamma}(t)}$ for every $t\in[a,b]$ is called a \emph{companion} of the path $\gamma$.

  If a companion $\mathfrak{I}^{\gamma}$ of the path $\gamma$ exists, then $\gamma$ is called a  \emph{path with a companion} and the pair $(\gamma, \mathfrak{I}^{\gamma})$ is called a \emph{path with companion} $\mathfrak{I}^{\gamma}$.

  If the path $\gamma$ has a unique companion $\mathfrak{I}^{\gamma}$, then both $\gamma$ and the pair $(\gamma, \mathfrak{I}^{\gamma})$ are called a \emph{tame} path.

\begin{proposition}\label{realization}
Let $\gamma: [a, b] \to \mathbb K\setminus \{0\}$ be a path with companion $\mathfrak{I}^{\gamma}$. If $\cI^{\gamma}, - \cI^{\gamma}: [a, b] \to \bS$ are the two lifts of of $\mathfrak{I}^{\gamma}$, then there exist continuous functions $x,y : [a, b] \to \mathbb R$ such that, for all $t\in [a, b]$,
\[
\gamma(t)=x(t)+\cI^{\gamma}(t)y(t)=x(t)+(-\cI^{\gamma}(t))(-y(t)).
\]
These last expressions are called \emph{canonical forms of} $(\gamma, \mathfrak{I}^{\gamma})$.
\end{proposition}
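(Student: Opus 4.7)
The plan is to use the simply connectedness of $[a,b]$ to lift the companion $\mathfrak{I}^{\gamma}$ to a continuous map $\cI^{\gamma}:[a,b]\to\bS$, and then to extract $x(t)$ and $y(t)$ from $\gamma(t)$ by taking, respectively, its real part and its orthogonal projection onto the line spanned by $\cI^{\gamma}(t)$ inside $\Im(\bK)$.

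First, I would observe that the projection $[\,\cdot\,]:\bS\to\bS/\{\pm\Id\}$ is the $2:1$ universal covering map recalled in the excerpt. Since $[a,b]$ is simply connected and $\mathfrak{I}^{\gamma}$ is continuous by hypothesis, classical covering-space theory guarantees that $\mathfrak{I}^{\gamma}$ admits exactly two continuous lifts to $\bS$, namely $\cI^{\gamma}$ and $-\cI^{\gamma}$, each uniquely determined by the prescribed value at $t=a$. This justifies the statement's reference to ``the two lifts''.

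Next, with a choice of lift $\cI^{\gamma}$ fixed, I would define
\[
x(t):=\Re(\gamma(t)),\qquad y(t):=\langle\,\Im(\gamma(t))\,,\,\cI^{\gamma}(t)\,\rangle,
\]
where $\langle\,\cdot\,,\,\cdot\,\rangle$ denotes the standard Euclidean inner product on $\Im(\bK)\simeq\R^{2^s-1}$. Continuity of $x$ is immediate from the continuity of $\Re$ and of $\gamma$; continuity of $y$ follows because both $\Im\circ\gamma$ and $\cI^{\gamma}$ are continuous with values in $\Im(\bK)$, and the inner product is a continuous bilinear form. To verify the identity $\gamma(t)=x(t)+\cI^{\gamma}(t)y(t)$, the key remark is that, by the companion hypothesis, $\gamma(t)\in\C_{\mathfrak{I}^{\gamma}(t)}=\C_{\cI^{\gamma}(t)}$, which forces $\Im(\gamma(t))$ to be a real multiple of the unit imaginary $\cI^{\gamma}(t)$. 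Writing $\Im(\gamma(t))=c(t)\cI^{\gamma}(t)$ and pairing with $\cI^{\gamma}(t)$ (which has unit norm in $\Im(\bK)$) yields $y(t)=c(t)$, and since $\cI^{\gamma}(t)$ commutes with the real scalar $y(t)$, the claimed decomposition follows. The second canonical form is then immediate: replacing $\cI^{\gamma}$ by $-\cI^{\gamma}$ in the same construction produces the coordinate $-y(t)$, and the identity $(-\cI^{\gamma}(t))(-y(t))=\cI^{\gamma}(t)y(t)$ shows that the two canonical forms represent the same element $\gamma(t)$.

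The main obstacle here is conceptual rather than computational: one must recognize that writing $\gamma$ globally in canonical form requires choosing a \emph{sign} of the imaginary unit at each $t$, a choice which a priori is defined only up to $\pm\Id$. It is exactly the simply connectedness of $[a,b]$ that permits this sign to be propagated continuously along the whole interval; without it, one would be forced to patch local canonical forms, which is the phenomenon behind the failure of lifts discussed in the examples preceding the statement.
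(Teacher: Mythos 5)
Your proof is correct and follows essentially the same route as the paper: lift the companion along the simply connected interval $[a,b]$ and extract $x(t)$ and $y(t)$ from $\gamma(t)$ by operations that are manifestly continuous. The only cosmetic difference is that you recover $y(t)$ via the Euclidean inner product $\langle \Im(\gamma(t)), \cI^{\gamma}(t)\rangle$ whereas the paper uses the quaternionic product $-\cI^{\gamma}(t)\Im(\gamma(t))$; these give the same scalar because $\Im(\gamma(t))$ is a real multiple of the unit vector $\cI^{\gamma}(t)$.
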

\begin{proof}
Since $\gamma(t)$ and $\cI^{\gamma}(t)$ are both continuous, then $\Re(\gamma(t))=x(t)$ and $-\cI^{\gamma}(t)\Im(\gamma(t))=y(t)$ are continuous as well on $[a, b]$.
\end{proof}

%If $\mathfrak{I}^{\gamma}$ is a companion of the path $\gamma$

%The curve $\gamma$ is {\em  tame closed} if it is  tame, closed
%and the curve $\mathfrak{I}^{\gamma}$ is
%also closed.
%
%A tame loop $\gamma$ is {\em twisted} if
%$\mathfrak{I}^{\gamma}$ is closed and not homologous to $0.$
\end{definition}

It is easy to see that all paths lying entirely in a complex slice have a companion.  Notice as well that a  path $\gamma: [a,b] \ra \bK \setminus \{0\}$ may have more than one companion: this happens for example when the path $\gamma$ is  such that  %$\gamma([c,d])=\{x_0\} \subset \R$ for $a\leq c<d\leq b$.
 $\gamma([a,b])\subset (0,\infty)$; in this case, for an arbitrary path $\cI^{\gamma}:[a,b] \to \bS$,  the induced path $\mathfrak{I}^{\gamma}: [a,b] \ra \bS/ \{\pm \Id\}$ is  a companion of $\gamma$; consequently $\gamma$ is not tame. For a similar reason, a path   $\gamma: [a,b] \ra \bK \setminus \{0\}$ which maps a closed sub-interval of $[a,b]$ to a real number has more than one companion, and hence is not tame.

\begin{remark}
\emph{There exist paths in  $\bK \setminus \{0\}$ which can be lifted to $\mathcal{E}^+_\bK$, but have no companion. Indeed, set
\[
\hat{\sigma}=-\overline \sigma :[\pi/2, 3\pi/2] \to \bK \setminus \{0\}
\]
where $\sigma$ is the path defined in Example \ref{esempio1} (a). The path $\hat{\sigma}$ is the symmetric  image of the path $\sigma$ with respect to the plane of purely imaginary quaternions (see figure \ref{GArc}) and, as pointed out in Example \ref{esempio1} (c), it can be lifted to  $\mathcal{E}^+_\bK$. Obviously $\hat{\sigma}$ has no companion: the continuity of a companion cannot hold at $t=\pi$.
}
\end{remark}

%////////////////////////////////////////////////////////////
The following definition will play a central role in the sequel.

\begin{definition}\label{winding}  Let $\gamma: [a,b] \ra \bK \setminus \{0\}$
  be a path with a companion $\mathfrak{I}^{\gamma}: [a,b] \to \bS/\{\pm \Id \}$, let $\cI^{\gamma}, -{\cI^{\gamma}}:[a,b] \to \bS$ be the two h  (continuous) lifts of $\mathfrak{I}^{\gamma}$ to $\bS$ and let
 \[
  \gamma(t)=x(t)+\cI^{\gamma}(t)y(t)=x(t)+(-\cI^{\gamma})(t)(-y(t))
  \]
  be the canonical forms of $(\gamma, \mathfrak{I}^{\gamma})$.
 The paths $\gamma_{\cI^{\gamma}}, \gamma_{-\cI^{\gamma}}: [a,b] \to \C \setminus \{0\}$ defined by
  \[
 \gamma_{\cI^{\gamma}}=x(t)+iy(t), \qquad \qquad \gamma_{-\cI^{\gamma}}=x(t)-iy(t)
  \]
are called   \emph{ the (two conjugated) shadows associated with the pair  $(\gamma,\mathfrak{I}^{\gamma})$}. %companion $\mathfrak{I}^{\gamma}$ of} $\gamma$.
 If the path $\gamma$ is tame, then the paths $\gamma_{\cI^{\gamma}}$ and $ \gamma_{-\cI^{\gamma}}$ are simply called \emph{the (two) shadows associated with the path} $\gamma$.
\end{definition}

\begin{remark}\emph{
The two shadows associated with the pair ($\gamma$, $\mathfrak{I}^{\gamma}$) are conjugate paths.}
\end{remark}

%//////////////////////////////////////////////////////////////////////////////

Paths with a companion are of interest because they can all be lifted to $\mathcal{E}^+_\bK$.
%\marginpar{companion non unique}

\begin{proposition}\label{TameLift1} Let $\gamma: [a,b] \ra \bK \setminus \{0\}$ be a path with companion $\mathfrak{I}^{\gamma}:[a,b] \ra \bS/ \{\pm \Id\}$. Then there exist
\begin{itemize}
\item   a path $\cI^{\gamma}:[a,b] \to \bS$   with $[ \cI^{\gamma}(t)]=\mathfrak{I}^{\gamma}(t)$, for all $t\in [a,b]$,
\item a path $\arg^{\gamma}:[a,b] \to \R$,
\end{itemize}
such that, after setting $\Arg^{\gamma} = \cI^{\gamma} \arg^{\gamma}: [a,b] \to \Im(\H)$, the path
\[\Gamma(t) = (\gamma(t), \Arg^{\gamma}(t)) \]
is a lift of $\gamma$ to $\mathcal{E}^+_\bK$ with $\arg^{\gamma}(a)\in
[0,\pi]$, called a \emph{$\mathfrak{I}^{\gamma}$-lift of $\gamma$}.
%\marginpar{If the initial point is real, then the lift is not unique. Just to keep in mind.}

\noindent If, as in Definition \ref{arg_2l arg_2l+1},  for every $k\in \Z$ we set $\arg^{\gamma}_{2k}:= \arg^{\gamma} + 2k \pi$ and $\Arg_{2k}^\gamma := \cI^{\gamma} \arg_{2k}^{\gamma}$, then  the path
\[
\Gamma_{k}(t)=(\gamma(t), \Arg_{2k}^{\gamma}(t))
\]
is a  $\mathfrak{I}^{\gamma}$-lift of $\gamma$ to $\mathcal{E}^+_\bK$ with $\arg^{\gamma}_{2k}(a)\in
[2k\pi, (2k+1)\pi]$.
%
%\noindent Finally, for all $k\in \Z$, the map defined on the interval $[a,b]$ by
%\begin{equation*}\label{-kth branch on K-0}
%(\log_{k}\circ \gamma)(t) := (L\circ\Gamma_{k})(t) = \log|\gamma(t)| + \Arg_{2k}(\gamma(t))
%\end{equation*}
%is a continuation of the hypercomplex logarithm along $\gamma$ with initial point $\log|\gamma(a)| + \Arg_{2k}(\gamma(a))$, and is called  \emph{a $k$-th branch of the hypercomplex logarithm along $\gamma$ with initial point $\log|\gamma(a)| + \Arg_{2k}(\gamma(a))$.}
%
\end{proposition}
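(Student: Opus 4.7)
The strategy is to construct $\Gamma$ in three stages: first lift the companion $\mathfrak{I}^\gamma$ across the double cover $\bS\to\bS/\{\pm\Id\}$, then invoke Proposition \ref{realization} to produce a genuine complex shadow of $\gamma$, and finally lift this shadow through the universal covering $\exp\colon\C\to\C\setminus\{0\}$. The chosen companion lift together with the resulting scalar argument will assemble into $\Arg^\gamma$; a sign flip (and if needed a $2\pi\Z$-shift) at $t=a$ will enforce the normalization, and the shifted lifts $\Gamma_k$ will follow at once.

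For the first two stages, observe that $[a,b]$ is simply connected and $\bS\to\bS/\{\pm\Id\}$ is a $2{:}1$ covering, so $\mathfrak{I}^\gamma$ admits exactly two continuous lifts $\pm\cI^\gamma\colon[a,b]\to\bS$. Fixing one such $\cI^\gamma$, Proposition \ref{realization} yields continuous $x,y\colon[a,b]\to\R$ with $\gamma(t)=x(t)+\cI^\gamma(t)y(t)$, and the associated shadow $\gamma_{\cI^\gamma}(t):=x(t)+iy(t)$ lies in $\C\setminus\{0\}$ because $|\gamma_{\cI^\gamma}(t)|=|\gamma(t)|>0$. By simple-connectedness of $[a,b]$ and the universal covering property of $\exp\colon\C\to\C\setminus\{0\}$, there is a continuous lift $\lambda=\log|\gamma|+i\alpha$ with $\exp\lambda=\gamma_{\cI^\gamma}$. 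Setting $\arg^\gamma:=\alpha$ and $\Arg^\gamma:=\cI^\gamma\arg^\gamma$, the identities $x(t)=|\gamma(t)|\cos\alpha(t)$ and $y(t)=|\gamma(t)|\sin\alpha(t)$ give
\[
\gamma(t)=|\gamma(t)|\bigl(\cos\alpha(t)+\cI^\gamma(t)\sin\alpha(t)\bigr)=|\gamma(t)|\exp\Arg^\gamma(t),
\]
which is precisely the condition for $(\gamma(t),\Arg^\gamma(t))\in\mathcal E^+_\bK$; hence $\Gamma(t):=(\gamma(t),\Arg^\gamma(t))$ is a lift of $\gamma$.

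The normalization and the shifts exploit the remaining ambiguity of the construction. The lift $\alpha$ is determined only modulo $2\pi\Z$, and the only other companion lift is $-\cI^\gamma$, whose use replaces the pair $(\cI^\gamma,\alpha)$ by $(-\cI^\gamma,-\alpha)$ (leaving $\Arg^\gamma$ unchanged but swapping the shadow with its conjugate). First shift $\alpha$ so that $\alpha(a)\in(-\pi,\pi]$; if the result is negative, apply the flip to land in $(0,\pi)$. For the last assertion, set $\arg^\gamma_{2k}:=\arg^\gamma+2k\pi$ and $\Arg^\gamma_{2k}:=\cI^\gamma\arg^\gamma_{2k}$; since $\exp(\cI^\gamma(t)\cdot 2k\pi)=1$, the identity $\gamma=|\gamma|\exp\Arg^\gamma_{2k}$ persists, so $\Gamma_k(t):=(\gamma(t),\Arg^\gamma_{2k}(t))\in\mathcal E^+_\bK$ is another $\mathfrak{I}^\gamma$-lift, with $\arg^\gamma_{2k}(a)\in[2k\pi,(2k+1)\pi]$ by construction. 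The only step going beyond routine covering-space arguments is the very first one, namely lifting through the nontrivial double cover $\bS\to\bS/\{\pm\Id\}$; once the companion has been lifted, everything reduces to the classical construction of a continuous branch of the complex logarithm along a path in $\C\setminus\{0\}$, applied slice-by-slice to the shadow.
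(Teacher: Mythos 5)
Your proposal is correct and follows essentially the same route as the paper: lift the companion through the double cover $\bS\to\bS/\{\pm\Id\}$, pass to the complex shadow via Proposition \ref{realization}, obtain a continuous argument by lifting the shadow through $\exp\colon\C\to\C\setminus\{0\}$, normalize at $t=a$ by exchanging $\cI^\gamma\leftrightarrow-\cI^\gamma$ (and shifting by $2\pi\Z$) if needed, and observe that the $2k\pi$-shifts give the $\Gamma_k$. You merely spell out explicitly the covering-space mechanism and the verification $\gamma(t)=|\gamma(t)|\exp\Arg^\gamma(t)$ that the paper leaves as ``straightforward,'' and your normalization (lift first, then flip/shift) is an equivalent rearrangement of the paper's a priori choice of the lift with $\arg(\gamma_{\cI^\gamma}(a))\in[0,\pi]$.
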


\begin{proof}
There exist exactly two continuous lifts
$\cI^{\gamma}, -\cI^{\gamma}$ of $\mathfrak{I}^{\gamma}$ to the universal covering $\bS$ of $\bS/ \{\pm \Id\}$. Correspondingly, there exist two shadows $\gamma_{\cI^{\gamma}}, \gamma_{-\cI^{\gamma}}: [a,b] \to \C \setminus \{0\}$ associated with $\mathfrak{I}^{\gamma}$. Exchange  $\cI^{\gamma}$ and $-\cI^{\gamma}$ if necessary, so that $\cI^{\gamma}$ is such that %\marginpar{\color{red} not necessarily the only}
$\arg(\gamma_{\cI^{\gamma}}(a))\in [0,\pi]$.
As a complex path, $\gamma_{\cI^{\gamma}}$ has a well defined argument $\arg^{\gamma_{\cI^{\gamma}}}:[a,b] \to \R$ such that $\arg^{\gamma_{\cI^{\gamma}}}(a)\in [0, \pi]$. Set $\arg^{\gamma} := \arg^{\gamma_{\cI^{\gamma}}}$. Then the chosen paths  $\cI^{\gamma}$ and $\arg^{\gamma}$ have the properties required in the statement.
The rest of the proof is straightforward.
\end{proof}

The lifts $\Gamma$ and $\Gamma_k$ (for $k\in \Z$) appearing in the last Proposition are not unique, when $\Gamma(a)$ and $\Gamma_k(a)$ are real.

At this point, Proposition \ref{first lemma} implies directly the existence of all branches of the logarithm, along all paths in  $ \bK \setminus \{0\}$ having a companion.

\begin{corollary}

Let $\gamma: [a,b] \ra \bK \setminus \{0\}$ be a path with companion $\mathfrak{I}^{\gamma}:[a,b] \ra \bS/ \{\pm \Id\}$. For every $k\in \Z$, let
\[
\Gamma_{k}=(\gamma, \Arg_{2k}^{\gamma})
\]
be a  $\mathfrak{I}^{\gamma}$-lift of $\gamma$ to $\mathcal{E}^+_\bK$ with $\arg^{\gamma}_{2k}(a)\in
[2k\pi, (2k+1)\pi]$. Then, the map defined on the interval $[a,b]$ by
\begin{equation*}\label{-kth branch on K-0}
(\log_{k}\circ \gamma)(t) := (L\circ\Gamma_{k})(t) = \log|\gamma(t)| + \Arg_{2k}(\gamma(t))
\end{equation*}
is a continuation of the hypercomplex logarithm along $\gamma$ with initial point $\log|\gamma(a)| + \Arg_{2k}(\gamma(a))$. This map is called  \emph{a $k$-th branch of the hypercomplex logarithm along $\gamma$ with initial point $\log|\gamma(a)| + \Arg_{2k}(\gamma(a))$.}
\end{corollary}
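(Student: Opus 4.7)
The plan is to derive the statement as an almost immediate corollary of the two previously established pillars: Proposition \ref{TameLift1}, which produces the lift $\Gamma_k$ from the companion data, and Proposition \ref{first lemma}, which converts lifts of $\gamma$ to $\mathcal{E}^+_\bK$ into continuations of the hypercomplex logarithm along $\gamma$. The only nontrivial content is the explicit identification of $L \circ \Gamma_k$ with the formula written on the right-hand side, and the verification that the initial value lands in the prescribed cell.

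First I would invoke Proposition \ref{TameLift1}: given the companion $\mathfrak{I}^\gamma$, this proposition already delivers, for every $k \in \Z$, a lift $\Gamma_k = (\gamma,\Arg^\gamma_{2k})$ of $\gamma$ to $\mathcal{E}^+_\bK$ whose argument component satisfies $\arg^\gamma_{2k}(a)\in[2k\pi,(2k+1)\pi]$. So the hypothesis of the present corollary is immediately consistent, and the lift in question exists and is the one produced by that proposition. Next, I would apply Proposition \ref{first lemma}: since the lift $\Gamma_k$ exists, the map $\widetilde\gamma := L\circ\Gamma_k$ is automatically a continuation of the logarithm along $\gamma$, that is, $\exp\circ\widetilde\gamma = \pro\circ E\circ L\circ\Gamma_k = \pro\circ\Gamma_k = \gamma$.

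It then remains to unwind the notation. By the very definition of $L$, we have $L(q,p)=\log|q|+p$ for $(q,p)\in\mathcal{E}^+_\bK$, and therefore
\[
(L\circ\Gamma_k)(t) \;=\; L(\gamma(t),\Arg^\gamma_{2k}(t)) \;=\; \log|\gamma(t)| + \Arg^\gamma_{2k}(t),
\]
which is the claimed formula once we agree that, along the path $\gamma$, the symbol $\Arg_{2k}(\gamma(t))$ stands for $\Arg^\gamma_{2k}(t) = \cI^\gamma(t)\arg^\gamma_{2k}(t)$ constructed from the companion as in Proposition \ref{TameLift1}. At $t=a$, the condition $\arg^\gamma_{2k}(a)\in[2k\pi,(2k+1)\pi]$ coincides with the range prescribed in Definition \ref{arg_2l arg_2l+1}, so
\[
\widetilde\gamma(a) \;=\; \log|\gamma(a)| + \Arg_{2k}(\gamma(a)),
\]
which is exactly the prescribed initial point.

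The only step requiring a bit of care—and the one I would call out as the main (though mild) subtlety—is the interpretation of $\Arg_{2k}(\gamma(t))$ at values of $t$ for which $\gamma(t)\in\R$, since Definition \ref{arg_2l arg_2l+1} does not literally assign values there. The existence of the companion $\mathfrak{I}^\gamma$ is precisely what resolves this: it picks out a continuous slice $\C_{\mathfrak{I}^\gamma(t)}$ containing $\gamma(t)$ at every $t$, so that $\cI^\gamma$ and $\arg^\gamma$ extend continuously across real crossings and the expression $\log|\gamma(t)|+\Arg^\gamma_{2k}(t)$ remains well-defined and continuous throughout $[a,b]$. Once this convention is fixed, no further work is needed.
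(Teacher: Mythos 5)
Your proposal is correct and takes essentially the same route as the paper, which simply observes that the statement is a direct consequence of Proposition~\ref{TameLift1} (existence of the lift $\Gamma_k$) followed by Proposition~\ref{first lemma} (lifts yield continuations via $L$). Your additional remark about interpreting $\Arg_{2k}(\gamma(t))$ at real crossings through the companion is a sensible clarification, but it is not a new idea beyond what those two propositions already encode.
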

\begin{proof}
The proof is a straightforward consequence of Proposition \ref{TameLift1} and Proposition \ref{first lemma}.
\end{proof}

We will now turn our attention to the case of loops of $\bK \setminus \{0\}$.
%\marginpar{\color{blue} REVISITED UNTIL HERE}

%\begin{definition}
%Let  $[a,b] \subset \bR$ and let  $\gamma : [a,b] \ra \bK \setminus \{0\}$ be a tame path with  companion $\mathfrak{I}^{\gamma}: [a,b] \ra \bS/\{\pm \Id\}$.
%
%The path $\gamma$ is called \emph{a tame loop}  if both $\gamma$ and $\mathfrak{I}^{\gamma}$ are closed.
%
%The tame loop $\gamma$ is called \emph{twisted}  if $\mathfrak{I}^{\gamma}$ is not homotopic to a constant in $\bS/\{\pm \Id \}$; if instead $\mathfrak{I}^{\gamma}$ is homotopic to a constant, then $\gamma$ is said to be \emph{untwisted}.
%
%%The curve $\gamma$ is {\em  tame closed} if it is  tame, closed
%%and the curve $\mathfrak{I}^{\gamma}$ is
%%also closed.
%%
%%A tame loop $\gamma$ is {\em twisted} if
%%$\mathfrak{I}^{\gamma}$ is closed and not homologous to $0.$
%\end{definition}
\begin{definition}
Let  $[a,b] \subset \bR$ and let  $\gamma : [a,b] \ra \bK \setminus \{0\}$ be a path with a companion $\mathfrak{I}^{\gamma}: [a,b] \ra \bS/\{\pm \Id\}$.

If both $\gamma$ and $\mathfrak{I}^{\gamma}$ are closed, then the path $\gamma$ is called \emph{a loop with companion} $\mathfrak{I}^{\gamma}$, and the pair $(\gamma,\mathfrak{I}^{\gamma}) $ is called \emph{a loop with companion}.

The  loop with companion $(\gamma,\mathfrak{I}^{\gamma})$ is called \emph{untwisted}  if $\mathfrak{I}^{\gamma}$ is homotopic to a constant in $\bS/\{\pm \Id \}$; if instead $\mathfrak{I}^{\gamma}$ is not homotopic to a constant, then $(\gamma,\mathfrak{I}^{\gamma})$ is said to be \emph{twisted}.

%The tame path $\gamma$ is called \emph{a tame loop}  if $(\gamma,\mathfrak{I}^{\gamma})$ is a loop with companion.
\end{definition}

In the most relevant case in which $\gamma$ is tame, we can specialize the definition as follows.
\begin{definition}
Let  $[a,b] \subset \bR$ and let  $\gamma : [a,b] \ra \bK \setminus \{0\}$ be a tame path with  companion $\mathfrak{I}^{\gamma}: [a,b] \ra \bS/\{\pm \Id\}$.

If both $\gamma$ and $\mathfrak{I}^{\gamma}$ are closed, then $\gamma$ is called \emph{a tame loop (with companion} $\mathfrak{I}^{\gamma}$), and the pair $(\gamma,\mathfrak{I}^{\gamma}) $  is called \emph{a tame loop}.

The tame loop $(\gamma,\mathfrak{I}^{\gamma})$  is called \emph{untwisted}  if $\mathfrak{I}^{\gamma}$ is homotopic to a constant in $\bS/\{\pm \Id \}$; if instead $\mathfrak{I}^{\gamma}$ is not homotopic to a constant, then $(\gamma,\mathfrak{I}^{\gamma})$ is said to be \emph{twisted}.
\end{definition}

\begin{remark}
   {\em For any fixed $I\in \bS$, let $\gamma:[a,b] \to \bK \setminus \{0\}$ be a path lying in the complex slice $\C_I$. The path $\gamma$  has always a particularly simple companion, namely $\mathfrak{I}^{\gamma}:[a,b] \to \bS/\{\pm \Id \}$ constantly equal to $[I]$. Moreover, the two different lifts of  $\mathfrak{I}^{\gamma}$ to $\bS$ are both constantly equal to $I$ or $-I$, respectively. As a consequence, if the given path  $\gamma$ is closed and tame, it is a tame loop and is untwisted.}
\end{remark}

A twisted loop necessarily intersects the real axis. Indeed the following result holds.

\begin{proposition}\label{real axis}  Let  $\gamma: [a,b] \ra \bK \setminus \{0\}$ be a loop which misses the real axis. Then $\gamma$ is  a tame loop and is untwisted. %\marginpar{It suffices that it misses the negative real axis with zero, because we have a lift with the principal branch.{\color{red}I have (maybe) an example of a (continuous) loop which misses the negative real axis with zero and is not tame.}}
\end{proposition}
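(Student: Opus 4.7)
The plan is to exhibit a specific companion for $\gamma$, show it is the only one possible (tameness), and then prove null-homotopy by lifting to the simply connected cover $\bS$.

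First, since $\gamma(t) \in \bK\setminus \R$ for all $t\in[a,b]$, the imaginary unit function $\cI$ introduced just before Remark \ref{principaleasy} is continuous along $\gamma$ (the restriction $\cI\colon \bK\setminus \R \to \bS$ is a smooth map, and $\gamma$ never reaches the locus where it fails to extend). Hence the composition $\cI^\gamma := \cI\circ \gamma\colon[a,b]\to \bS$ is continuous, and since $\gamma(a)=\gamma(b)$, it is actually a loop in $\bS$. Projecting via $[\ ]\colon \bS\to \bS/\{\pm\Id\}$ yields a closed continuous path $\mathfrak{I}^\gamma := [\cI^\gamma]\colon[a,b]\to \bS/\{\pm\Id\}$, and by construction $\gamma(t) = \Re(\gamma(t)) + \cI^\gamma(t)\,|\Im(\gamma(t))| \in \C_{\mathfrak{I}^\gamma(t)}$. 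So $\mathfrak{I}^\gamma$ is a companion of $\gamma$, and the pair is a loop with companion.

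For tameness, I must check this companion is unique. Suppose $\mathfrak{J}\colon[a,b]\to \bS/\{\pm\Id\}$ is any companion of $\gamma$. Fix $t\in[a,b]$: since $\gamma(t)\notin \R$, the complex slice containing $\gamma(t)$ is determined uniquely—explicitly, writing $\gamma(t) = x+Iy$ with $y>0$ forces $I = \cI^\gamma(t)$, hence $\C_{\mathfrak{J}(t)} = \C_I = \C_{\mathfrak{I}^\gamma(t)}$, i.e. $\mathfrak{J}(t) = \mathfrak{I}^\gamma(t)$. Thus $\mathfrak{J} = \mathfrak{I}^\gamma$, and $\gamma$ is a tame loop.

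It remains to prove that $(\gamma,\mathfrak{I}^\gamma)$ is untwisted. The covering map $[\ ]\colon \bS\to \bS/\{\pm\Id\}$ realises $\bS$ as the universal cover of $\bS/\{\pm\Id\}$, since $\bS$ is a round sphere of dimension $2$ (quaternionic case) or $6$ (octonionic case) and is therefore simply connected. The loop $\cI^\gamma\colon [a,b]\to \bS$ constructed above is a lift of $\mathfrak{I}^\gamma$, and because $\bS$ is simply connected, $\cI^\gamma$ is null-homotopic in $\bS$ via some homotopy $H\colon [a,b]\times[0,1]\to \bS$ with $H(\cdot,0)=\cI^\gamma$ and $H(\cdot,1)$ constant. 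Composing with the projection, $[\ ]\circ H$ is a null-homotopy of $\mathfrak{I}^\gamma$ in $\bS/\{\pm\Id\}$, proving that $(\gamma,\mathfrak{I}^\gamma)$ is untwisted.

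The only subtle point is the continuity of $\cI\circ\gamma$ at the outset: one must observe that although $\cI$ fails to extend continuously to $\R$ (Remark \ref{principaleasy}), the hypothesis $\gamma([a,b])\cap \R=\varnothing$ bypasses exactly this obstruction, which is what makes everything else go through.
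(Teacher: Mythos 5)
Your proof is correct and follows the paper's strategy: identify the (unique) companion $\mathfrak{I}^\gamma$, lift it to a loop $\cI^\gamma$ in $\bS$, and use the simple connectedness of $\bS$ to push a null-homotopy down to $\bS/\{\pm\Id\}$. The only real difference is how you establish that $\cI^\gamma$ is a loop: the paper first lifts $\gamma$ to $\mathcal{E}^+_\bK$ via Proposition \ref{k meno R meno}, writes the lift as $\Arg^\gamma = \cI^\gamma\arg^\gamma$, and compares signs of $\arg^\gamma_{2k}$ at the endpoints to force $\cI^\gamma(a)=\cI^\gamma(b)$; you instead observe directly that $\cI\colon\bK\setminus\R\to\bS$ is already continuous, so $\cI^\gamma=\cI\circ\gamma$ inherits both continuity and closedness from $\gamma$ with no detour through the lift. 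Your route is slightly more economical and also makes the uniqueness of the companion explicit, which the paper's proof leaves implicit.
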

\begin{proof}
By Proposition \ref{k meno R meno}, the loop $\gamma$ can be lifted to a path $\Gamma : [a,b] \to  \mathcal{E}^+_\bK$ with $\Gamma = (\gamma, \Arg^{\gamma})$. Let us consider the map $\Arg^{\gamma} = \cI^{\gamma} \arg^{\gamma}: [a,b] \to \Im(\bK)$. By the hypothesis, there exists $k\in \Z$ such that the map  $\arg^{\gamma}=\arg^{\gamma}_{2k}: [a,b] \to (2k\pi, (2k+1)\pi)$ is never vanishing and hence has constant sign. Now, since $\gamma$ is closed, we have that %for some integer $m\in \Z$
%\[
%\cI^{\gamma}(a) \arg^{\gamma}(a)=\cI^{\gamma}(b) \arg^{\gamma}(b)=\cI^{\gamma}(b)[ \arg^{\gamma}(a) +2m\pi]
%\]
%whence $m=0$ and %$\arg^{\gamma}(a)= \arg^{\gamma}(b)$
\[
\cI^{\gamma}(a) \arg^{\gamma}_{2k}(a)=\cI^{\gamma}(b)\arg^{\gamma}_{2k}(b).
\]
Since $\arg^{\gamma}_{2k}(a)$ and $\arg^{\gamma}_{2k}(b)$ have the same sign and both belong to the interval $(2k\pi, (2k+1)\pi)$, we obtain
\[
\arg^{\gamma}_{2k}(a)=\arg^{\gamma}_{2k}(b)
\]
and hence
\[
\cI^{\gamma}(a) =\cI^{\gamma}(b).
\]
Therefore the path $\cI^{\gamma}: [a,b] \to \bS$ is a loop, and hence the unique companion $[\cI^{\gamma}]:[a,b] \to \bS/\{\pm \Id \}$ is a loop,  homotopic to a constant. As a consequence the path $\gamma$ is an untwisted, tame loop.
\end{proof}
%\marginpar{\color{blue} FROM HERE}

\subsection{Winding number for untwisted loops with companion in $\bK\setminus\{0\}$}\label{sec31}
%\\
%\newline
%\\
It is well known that the definition of winding number for a loop (with respect to a point) is not natural in  $\mathbb R^n$ when $n$ is greater than $2$. Nevertheless, in our setting, we can start by giving a definition of winding number that has full meaning for %tame
loops with companion that are untwisted and lie in $\mathbb K\setminus\{0\}$.

The following result opens a way to this definition of winding number.

\begin{proposition}\label{untwisted loop}
A loop $\gamma: [a,b] \ra \bK \setminus \{0\},$ $\gamma([a,b]) \not\subset \R$, with companion $\mathfrak{I}^{\gamma}$ is untwisted if, and only if, for any chosen non real initial point of $\gamma$, both shadows associated with $\mathfrak{I}^{\gamma}$ are loops.
\end{proposition}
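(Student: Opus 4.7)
The plan is to reduce the statement about untwistedness to a statement about whether the continuous lifts $\cI^{\gamma}, -\cI^{\gamma}:[a,b]\to\bS$ of the companion are themselves loops, and then read off the closing behavior of the shadows from the canonical form.

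First I would invoke covering space theory for the $2{:}1$ universal covering $[\ ]:\bS\to\bS/\{\pm\Id\}$. Since $\bS=S^{\dim_{\R}\bK-2}$ is a sphere of dimension $2$ or $6$, it is simply connected, so $\pi_1(\bS/\{\pm\Id\})=\Z/2\Z$. Therefore the closed path $\mathfrak{I}^{\gamma}$ is null-homotopic in $\bS/\{\pm\Id\}$ if, and only if, any (hence both) of its continuous lifts $\cI^{\gamma},-\cI^{\gamma}$ to $\bS$ is closed, i.e.\ $\cI^{\gamma}(a)=\cI^{\gamma}(b)$. In the twisted case, instead, $\cI^{\gamma}(a)=-\cI^{\gamma}(b)$. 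This dichotomy is the bridge between the hypothesis (untwistedness) and the conclusion (shadows being loops).

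Next I would exploit the canonical form from Proposition \ref{realization}: writing $\gamma(t)=x(t)+\cI^{\gamma}(t)y(t)$ with $x(t)=\Re\gamma(t)$ and $y(t)=-\cI^{\gamma}(t)\,\Im\gamma(t)$, both $x$ and $y$ are continuous real functions on $[a,b]$. The shadows are $\gamma_{\pm\cI^{\gamma}}(t)=x(t)\pm iy(t)$, so each shadow is a loop if, and only if, $x(a)=x(b)$ and $y(a)=y(b)$. Since $\gamma$ is a loop, $x(a)=x(b)$ and $\Im\gamma(a)=\Im\gamma(b)$ are automatic; the whole question reduces to whether $y(a)=y(b)$. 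Using $y(t)=-\cI^{\gamma}(t)\Im\gamma(t)$ one computes
\[
y(b)-y(a)=-\bigl(\cI^{\gamma}(b)-\cI^{\gamma}(a)\bigr)\,\Im\gamma(a).
\]
If $(\gamma,\mathfrak{I}^{\gamma})$ is untwisted, then $\cI^{\gamma}(a)=\cI^{\gamma}(b)$, so $y(a)=y(b)$ and both shadows close up at any initial point. Conversely, if $(\gamma,\mathfrak{I}^{\gamma})$ is twisted, then $\cI^{\gamma}(b)=-\cI^{\gamma}(a)$, giving $y(b)=-y(a)$; for a non-real initial point $\gamma(a)$ we have $\Im\gamma(a)\neq 0$, hence $y(a)\neq 0$, hence $y(a)\neq y(b)$, so neither shadow is a loop.

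I expect the only delicate point to be the clean identification of the two cases via the universal cover, in particular to make sure that the hypothesis $\gamma([a,b])\not\subset\R$ together with a non-real initial point really forces $y(a)\neq 0$; once the canonical form is set up and the covering-space dichotomy is in hand, the remainder is a direct verification. The restriction to non-real initial points is essential precisely because a real value of $\gamma(a)$ would make $y(a)=0$ and collapse the twisted/untwisted distinction at the level of shadows.
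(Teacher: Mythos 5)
Your proof is correct and follows essentially the same route as the paper: pass to a continuous lift $\cI^{\gamma}$ of the companion, use the canonical form $\gamma(t)=x(t)+\cI^{\gamma}(t)y(t)$, and observe that untwistedness is exactly the statement $\cI^{\gamma}(a)=\cI^{\gamma}(b)$, which in the presence of $\gamma(a)=\gamma(b)$ and $y(a)\neq 0$ is equivalent to $y(a)=y(b)$, i.e.\ to the shadows being closed. Your explicit invocation of $\pi_1(\bS/\{\pm\Id\})\cong\Z/2\Z$ is a slightly more formal justification of the ``lift closed $\Leftrightarrow$ null-homotopic'' dichotomy that the paper uses implicitly (having already declared $[\ ]:\bS\to\bS/\{\pm\Id\}$ to be the universal covering), but it does not change the structure of the argument.
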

\begin{proof} %Since the loop is untwisted, it remains a untwisted if we choose the initial/final point of the loop arbitrarily.
 Let $\gamma_{\cI^{\gamma}}: [a,b] \to \C \setminus \{0\}$,
 $
 \gamma_{\cI^{\gamma}}(t)=x(t)+iy(t),
$
be one of the shadows associated with $\mathfrak{I}^{\gamma}$.

If the loop $\gamma$ is untwisted, then any lift $\cI^{\gamma}$ of the companion $\mathfrak{I}^{\gamma}$ of $\gamma$ is a loop, and hence it has coinciding endpoints. Therefore, the path
  \[
  \gamma(t)=x(t)+\cI^{\gamma}(t)y(t)
  \]
being a loop, the continuous function $y: [a,b]\to \R$ is such that $y(a)=y(b)$. Hence the associated shadow  $
 \gamma_{\cI^{\gamma}}(t)=x(t)+iy(t)
$
is closed.

 On the other hand, suppose the associated shadow $\gamma_{\cI^{\gamma}}: [a,b] \to \C \setminus \{0\}$,
 $
 \gamma_{\cI^{\gamma}}(t)=x(t)+iy(t)
$,
is a loop  and assume that  $y(a)=y(b) \ne 0.$ Since  the path
  \[
  \gamma(t)=x(t)+\cI^{\gamma}(t)y(t)
  \]
 is a loop by assumption, we obtain $\cI^{\gamma}(a)=\cI^{\gamma}(b)$ and so the lift $\cI^{\gamma}$ of $\mathfrak{I}^{\gamma}$ is  a loop. In conclusion, $\gamma$ is untwisted.
\end{proof}

We are now ready to use the well established definition of winding number for complex loops in $\C\setminus\{0\}$ to define the winding number in the case of untwisted loops with companion in $\bK\setminus\{0\}$.

\begin{definition} \label{windingnew}  Let the loop $\gamma: [a,b] \ra \bK \setminus \{0\}$ with companion $\mathfrak{I}^{\gamma}$ be untwisted.
The {\em winding number} (with respect to zero) of the loop $(\gamma, \mathfrak{I}^{\gamma})$, denoted $\emph{wind}(\gamma, \mathfrak{I}^{\gamma})$, is defined as the absolute value of the winding number (with respect to zero), $\emph{wind}(\gamma_{\cI^{\gamma}})$, of a shadow $\gamma_{\cI^{\gamma}}$ associated with $\mathfrak{I}^{\gamma}$:
\[
\emph{wind}(\gamma, \mathfrak{I}^{\gamma})=|\emph{wind}(\gamma_{\cI^{\gamma}})|.
\]
In the case in which the loop $(\gamma, \mathfrak{I}^{\gamma})$ is tame, there is one and only one companion of $\gamma$, and hence we can simply denote the winding number of $\gamma$ by
$\emph{wind}(\gamma)$.
\end{definition}

Of course, we need to show that the given definition of winding number of an untwisted loop with companion $(\gamma, \mathfrak{I}^{\gamma})$ does not depend on the choice of the shadow associated with $\mathfrak{I}^{\gamma}$. Indeed, the two shadows associated with $\mathfrak{I}^{\gamma}$ are conjugate loops: as a consequence, their winding numbers are opposite. Therefore, Definition \ref{windingnew} is consistent.

One of the important features of the classical winding number (with
respect to zero) of loops of $\C\setminus\{0\}$ is its invariance with
respect to homotopy between such loops. The winding number of an
untwisted loop with companion (in $\bK\setminus\{0\}$) just defined
cannot be invariant with respect to standard homotopy in $\bK\setminus\{0\}$:
all such loops are homotopic to a constant loop since
$\bK\setminus\{0\}$ is simply connected, and a constant loop has
vanishing winding number.

A special notion of homotopy comes into the scenery in our setting. The next definition is useful to define such a notion.
%\marginpar{\color{blue} REVISITED FROM HERE}

\begin{definition}
Let  $[a,b]\times [c,d] \subset \bR^2$ and let $F : [a,b]\times [c,d]  \ra \bK \setminus \{0\}$ be a  continuous map.

A continuous map  $\mathfrak{I}^{F}:[a,b]\times [c,d]  \ra \bS/ \{\pm \Id\}$ such that $F(t,s) \in
  \C_{\mathfrak{I}^{F}(t,s)}$ for every $(t,s)\in[a,b]\times [c,d] $ is called a \emph{companion} of the map $F$.

  If a companion $\mathfrak{I}^{F}$ of the map $F$ exists,  then $F$  is called a \emph{continuous map with companion} $\mathfrak{I}^{F}$, and $(F, \mathfrak{I}^{F})$ is called a \emph{continuous map with companion.}

  If the map $F$ has a unique companion $\mathfrak{I}^{F}$, then it is called a \emph{tame} map.
  \end{definition}

\begin{proposition}\label{realization2}
Let $F: [a, b]\times [c, d] \to \mathbb K\setminus \{0\}$ be a continuous map with companion $\mathfrak{I}^{F}$. If $\cI^{F}, - \cI^{F}: [a, b]\times [c, d]  \to \bS$ are the two lifts of $\mathfrak{I}^{F}$, then there exist continuous functions $x,y : [a, b]\times [c, d] \to \mathbb R$ such that, for all $(t, s)\in [a, b]\times [c, d]$,
\[
F(t)=x(t,s)+\cI^{F}(t, s)y(t, s)=x(t,s)+(-\cI^{F}(t,s))(-y(t,s)).
\]
These last expressions are called \emph{canonical forms of} $(F, \mathfrak{I}^{F})$.
\end{proposition}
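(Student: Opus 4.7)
The plan is to mimic, essentially verbatim, the proof of Proposition \ref{realization}, simply replacing the single parameter $t$ by the pair $(t,s)$ and invoking joint continuity. The algebraic identities that make the one-variable version work are pointwise, so they carry over without modification; the only thing to check is that the construction produces jointly continuous functions on the rectangle $[a,b]\times[c,d]$.

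Concretely, I would define
\[
x(t,s) := \Re(F(t,s)), \qquad y(t,s) := -\cI^{F}(t,s)\,\Im(F(t,s)),
\]
where the product on the right is taken in $\bK$. Since $F$ is continuous on $[a,b]\times[c,d]$, both $\Re F$ and $\Im F$ are jointly continuous, and $\cI^F$ is jointly continuous by hypothesis (it is one of the two continuous lifts of the companion $\mathfrak{I}^F$ to $\bS$, whose existence follows from $\bS \to \bS/\{\pm\Id\}$ being a covering and the simple connectedness arguments used to construct the lifts in the path case). Joint continuity of $y$ then follows from continuity of multiplication in $\bK$.

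The key pointwise check is that $y(t,s)$ is in fact real-valued. This uses the defining property of the companion: for each $(t,s)$ we have $F(t,s)\in \C_{\mathfrak{I}^F(t,s)}$, so $\Im(F(t,s))$ is a real multiple of $\cI^F(t,s)$, say $\Im(F(t,s))=r(t,s)\,\cI^F(t,s)$. Then
\[
y(t,s) = -\cI^F(t,s)\cdot r(t,s)\,\cI^F(t,s) = -r(t,s)\,(\cI^F(t,s))^{2} = r(t,s)\in\R,
\]
and moreover $\cI^F(t,s)\,y(t,s)=\Im(F(t,s))$, whence the first canonical form $F(t,s)=x(t,s)+\cI^F(t,s)\,y(t,s)$ holds. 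The second canonical form is obtained automatically by replacing $\cI^F$ with $-\cI^F$ and $y$ with $-y$, since $(-\cI^F)(-y)=\cI^F y$.

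Since every step is a direct two-variable analogue of the one-variable argument, I do not expect a real obstacle; the only conceptual point worth underlining is the availability of continuous lifts $\pm\cI^F$ of $\mathfrak{I}^F$ on the rectangle, which is guaranteed because $[a,b]\times[c,d]$ is simply connected and $\bS\to\bS/\{\pm\Id\}$ is a covering. Once those lifts are fixed, the proof reduces to the three lines above.
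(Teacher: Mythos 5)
Your proof is correct and takes essentially the same approach as the paper, which defines $x:=\Re(F)$ and $y:=-\cI^{F}\Im(F)$ and notes joint continuity; the paper's own proof is just the pointer ``see the proof of Proposition \ref{realization},'' whose one-variable argument you have reproduced verbatim in two variables. The extra details you spell out (that $y$ is real-valued by the companion property, and that the continuous lifts $\pm\cI^{F}$ exist because the rectangle is simply connected and $\bS\to\bS/\{\pm\Id\}$ is a covering) are left implicit in the paper.
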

\begin{proof}
See the proof of Proposition \ref{realization}
\end{proof}

As announced, the idea is now to define a special type of homotopy between paths, each having a companion and sharing the same endpoints. As customary, also in this paper homotopy between paths will always be meant with fixed endpoints.

\begin{definition} \label{c-homotopy}
Let $\gamma_1, \gamma_2 : [a,b]  \ra \bK \setminus \{0\}$ be two paths with the same endpoints $\gamma_1(a)=\gamma_2(a)=p$
and $\gamma_1(b)=\gamma_2(b)=q$. Let  $\mathfrak{I}^{\gamma_1}$ and $\mathfrak{I}^{\gamma_2}$ be companions of  $\gamma_1$ and $ \gamma_2$ respectively%, and let  $\cI^{\gamma_1}$ and $\cI^{\gamma_2}$ be lifts of the companions
.
If there exists a continuous map %homotopy
$F : [a,b]\times [0,1]  \ra \bK \setminus \{0\}$  with companion $ \mathfrak{I}^F$ such that:
\begin{enumerate}
\item $\mathfrak{I}^F(t,0)=\mathfrak{I}^{\gamma_1}(t)$ and $\mathfrak{I}^F(t,1)=\mathfrak{I}^{\gamma_2}(t)$, for all $t\in [a,b]$;
\item $F(t,0)=\gamma_1(t)$ and $F(t, 1)=\gamma_2(t)$, for all $t\in [a,b]$;
\item $F(0,s)=p$ and $F(1,s)=q$, for all $s\in [0,1]$;
\end{enumerate}
then we will say that $(\gamma_1, \mathfrak{I}^{\gamma_1}), (\gamma_2, \mathfrak{I}^{\gamma_2})$ are \emph{companion homotopic (or c-homotopic)} and that $(F,\mathfrak{I}^F)$ is a \emph{ c-homotopy} between $(\gamma_1, \mathfrak{I}^{\gamma_1})$ and $(\gamma_2, \mathfrak{I}^{\gamma_2})$.

Let $\gamma_1, \gamma_2 : [a,b]  \ra \bK \setminus \{0\}$ be two paths with the same endpoints. If there exist a companion
$\mathfrak{I}^{\gamma_1}$ of $\gamma_1$ and a companion $\mathfrak{I}^{\gamma_2}$ of $\gamma_2$ such
that $(\gamma_1, \mathfrak{I}^{\gamma_1}), (\gamma_2, \mathfrak{I}^{\gamma_2})$ are c-homotopic, then we say that
 $\gamma_1$ and  $\gamma_2$ are \emph{weakly c-homotopic}.
\end{definition}

The following simple result will be helpful in the sequel.

\begin{proposition}\label{explanationhomotopy}
Let the continuous map $F: [a, b]\times [c, d] \to \mathbb K\setminus \{0\}$ with companion $\mathfrak{I}^{F}$ be a c-homotopy between $(\gamma_1, \mathfrak{I}^{\gamma_1})$ and $(\gamma_2, \mathfrak{I}^{\gamma_2})$.
Then:
\begin{enumerate}
\item[(i)] the map $\mathfrak{I}^{F}$ is a homotopy between $\mathfrak{I}^{\gamma_1}$ and $\mathfrak{I}^{\gamma_2}$;
\item[(ii)] the homotopy $\mathfrak{I}^{F}$ can be lifted to a homotopy $\cI^{F}$ between a lift $\cI^{\gamma_1}$ of $\mathfrak{I}^{\gamma_1}$ and a lift $\cI^{\gamma_2}$ of $\mathfrak{I}^{\gamma_2}$ in such a way that the canonical form of $F$
\begin{equation}\label{canonical form F}
F(t,s)=x(t,s)+\cI^{F}(t, s)y(t, s)
\end{equation}
is a homotopy between the canonical forms
\[
\gamma_1(t)=x_1(t)+\cI^{\gamma_1}(t)y_1(t)
\]
and
\[
\gamma_2(t)=x_2(t)+\cI^{\gamma_2}(t)y_2(t)
\]
of $\gamma_1$ and $\gamma_2$, respectively;
\item[(iii)] the shadows
\[
x_1(t)+iy_1(t), \qquad x_2+iy_2(t)
\]
of $(\gamma_1, \mathfrak{I}^{\gamma_1})$ and $(\gamma_2, \mathfrak{I}^{\gamma_2})$, respectively, are homotopic in $\mathbb C \setminus\{0\}$.
\end{enumerate}
\end{proposition}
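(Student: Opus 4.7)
\medskip

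\noindent\emph{Proof plan.} Part (i) should be essentially a tautology: condition (1) in Definition \ref{c-homotopy} asserts exactly that $\mathfrak{I}^F(\cdot,0)=\mathfrak{I}^{\gamma_1}$ and $\mathfrak{I}^F(\cdot,1)=\mathfrak{I}^{\gamma_2}$, while condition (3) combined with continuity of $\mathfrak{I}^F$ on the compact segments $\{a\}\times[0,1]$ and $\{b\}\times[0,1]$ ensures that $\mathfrak{I}^F(a,s)$ and $\mathfrak{I}^F(b,s)$ behave compatibly with the endpoint constraint (indeed they are forced to be constant whenever $p$ or $q$ is non-real, and otherwise free to vary on the real axis without violating the companion condition). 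So $\mathfrak{I}^F$ is itself the required homotopy.

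For part (ii), I would invoke the fact recalled in the paper that the projection $[\ ]:\bS\to\bS/\{\pm\Id\}$ is a $2{:}1$ universal covering map. Since the parameter rectangle $[a,b]\times[0,1]$ is simply connected, standard covering-space theory provides exactly two continuous lifts $\cI^F$ and $-\cI^F$ of $\mathfrak{I}^F$ to $\bS$, each determined by its value at a single base point. I would pick $\cI^F$ so that $\cI^F(\cdot,0)$ agrees with the already chosen lift $\cI^{\gamma_1}$ of $\mathfrak{I}^{\gamma_1}$; this is possible because the restriction of $\cI^F$ to $[a,b]\times\{0\}$ is forced by uniqueness of path lifting to be one of the two lifts of $\mathfrak{I}^{\gamma_1}$. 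The restriction $\cI^F(\cdot,1)$ is then a continuous lift of $\mathfrak{I}^{\gamma_2}$, which I adopt as $\cI^{\gamma_2}$ (exchanging with $-\cI^{\gamma_2}$ and the corresponding $-y_2$ if needed, exactly as in the proof of Proposition \ref{TameLift1}). Applying Proposition \ref{realization2} to $F$ with this chosen $\cI^F$ yields the canonical form \eqref{canonical form F} with $x(t,s)=\Re F(t,s)$ and $y(t,s)=-\cI^F(t,s)\,\Im F(t,s)$, and specializing to $s=0,1$ gives canonical forms of $\gamma_1$ and $\gamma_2$ that are restrictions of the canonical form of $F$.

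For part (iii), I would set $G(t,s):=x(t,s)+iy(t,s)\in\mathbb C$. Continuity is inherited from $x,y$. To see that $G$ avoids $0$, note that
\[
|G(t,s)|^2=x(t,s)^2+y(t,s)^2=|F(t,s)|^2>0,
\]
since $F$ takes values in $\bK\setminus\{0\}$. Evaluating at $s=0$ and $s=1$ recovers the shadows $x_1(t)+iy_1(t)$ and $x_2(t)+iy_2(t)$ by construction. The fixed-endpoint condition follows from $F(a,s)=p$, $F(b,s)=q$: we have $x(a,s)=\Re p$ and $x(b,s)=\Re q$ constant in $s$, while $y(a,s)$ is continuous and satisfies $|y(a,s)|=|\Im p|$, which forces $y(a,s)$ to be either identically $0$ (if $p\in\R$) or to have constant sign and hence constant value (if $p\notin\R$); the same argument applies at $t=b$. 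So $G(a,s),G(b,s)$ are independent of $s$, and $G$ is the desired homotopy in $\mathbb C\setminus\{0\}$.

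The main (really the only) nontrivial step is the lifting in part (ii): all the work is carried by simple connectedness of the parameter square together with the covering-space property of $\bS\to\bS/\{\pm\Id\}$, and by the observation that one may have to flip the sign of the chosen lift of $\mathfrak{I}^{\gamma_2}$ to make the endpoint of the $s=1$ slice agree with the canonical form of $\gamma_2$. Everything in (i) and (iii) then follows by unwinding the definitions and invoking Proposition \ref{realization2}.
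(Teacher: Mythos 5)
Your proof is correct and follows essentially the same route as the paper's. The paper dismisses (i) and (ii) as "a straightforward consequence of Definition 3.18 and Propositions 3.8 and 3.19" and concentrates on (iii), where it defines $\mathcal{L}_1(t,s)=\Re(F(t,s))$, $\mathcal{L}_2(t,s)=-\cI^F(t,s)\Im(F(t,s))$ and checks the four boundary conditions — exactly your map $G=\mathcal{L}_1+i\mathcal{L}_2$. You simply spell out the covering-space argument for (ii) (simple connectedness of the parameter square plus the $2{:}1$ covering $\bS\to\bS/\{\pm\Id\}$, with the sign-normalization of the $s=1$ slice) and the small continuity argument forcing $y(a,\cdot)$ and $y(b,\cdot)$ to be constant, both of which the paper leaves implicit; nothing in substance differs.
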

\begin{proof}
The proofs of \emph{(i)} and \emph{(ii)} are a straightforward consequence of Definition \ref{c-homotopy} and of what is stated in Propositions \ref{realization} and \ref{realization2}. Let us prove \emph{(iii)}. To this aim, consider the canonical form of $F$ that appears in (\ref{canonical form F}) and the following continuous maps, for $(t,s)\in [a,b]\times[0,1]$:
\[\begin{array}{l}
\mathcal    {L}_1(t,s)=\Re(F(t,s)),\\
\mathcal{L}_2(t,s)=-\cI^{F}(t, s)\Im(F(t,s))
\end{array}
\]
%By construction we have that, in $[a,b]\times[0,1]$,
%\begin{equation}\label{rappresentazione}
%F(t,s)=L_1(t,s)+G(t,s)L_2(t,s).
%\end{equation}
%
We will prove that $\mathcal{L}=(\mathcal{L}_1, \mathcal{L}_2): [a,b]\times[0,1] \to \R^2\setminus\{(0,0)\}$ is a homotopy between the two given  shadows of $\gamma_1$ and $\gamma_2$.  Indeed, using directly formula \eqref{canonical form F} for the canonical form of $F$,  it is easy to check that on $[a,b]\times[0,1]$,
\[
\begin{array}{l}
\mathcal{L}(t,0)=(\mathcal{L}_1(t,0), \mathcal{L}_2(t,0))=(x_1(t), y_1(t)),\\
\mathcal{L}(t,1)=(\mathcal{L}_1(t,1), \mathcal{L}_2(t,1))=(x_2(t), y_2(t)),\\
\mathcal{L}(a,s)=(\mathcal{L}_1(a,s), \mathcal{L}_2(a,s))=(x_1(a), y_1(a))=(x_2(a), y_2(a)),\\
\mathcal{L}(b,s)=(\mathcal{L}_1(b,s), \mathcal{L}_2(b,s))=(x_1(b), y_1(b)=(x_2(b), y_2(b)).\\
\end{array}
\]
%and finally
%\[
%G(t,0)=\cI^{\gamma_1}(t), \ \ \ G(t,1)=\cI^{\gamma_2}(t)
%\]
%\[
%G(a,s)=\cI^{\gamma_1}(a)=\cI^{\gamma_2}(a), \ \ \ G(b,s)=\cI^{\gamma_1}(b)=\cI^{\gamma_2}(b).
%\]
The proof is now complete.

\end{proof}

\begin{example}
To better illustrate the major difference between complex and quaternionic cases, consider the complex curve, defined by
$$
\begin{array}{ll}
  \gamma(t) = 3e^{it}, \, t \in [0,\pi],  &\gamma(t)= -4 + \dfrac{t}{\pi},\, t \in [\pi,3\pi],\\\\
  \gamma(t) = e^{-it}, \, t \in [3\pi,4\pi],& \gamma(t)= - 3+\dfrac{t}{\pi},\, t \in [4\pi,6\pi].
\end{array}
$$
As a complex curve, i.e. with the constant companion $i,$ the curve $\gamma$ has  winding number $0$ and coincides with its own shadow.
As a quaternionic curve, $\gamma$ has a large family of companions $\mathfrak{I}^{\gamma}$;  for example one can consider-
$$
\begin{array}{ll}
  \mathcal{I}^{\gamma}(t) = i, \, t \in [0,\pi], & \mathcal{I}^{\gamma}(t)=  \mathcal{J}(t), \, t \in [\pi,3\pi],\\
  \mathcal{I}^{\gamma}(t) =  -i , \, t \in [3\pi,4\pi], & \mathcal{I}^{\gamma}(t)=  - \mathcal{J}(t),\, t \in [4\pi,6\pi];
\end{array}
$$
and $\mathfrak{I}^{\gamma}(t) = [\mathcal{I}^{\gamma}(t)]$,  where $\mathcal{J}: [\pi,3\pi] \ra \bS$ is an arbitrary continuous
curve with $\mathcal{J}(\pi) = i$ and $\mathcal{J}(3\pi) = -i.$
Correspondingly, the shadow of $(\gamma, \mathfrak{I}^{\gamma}) $ is
$$
\begin{array}{ll}
  \gamma_{\mathfrak{I}^{\gamma}}(t) = 3e^{it}, \, t \in [0,\pi],  & \gamma_{\mathfrak{I}^{\gamma}}(t)= -4 +\dfrac{t}{\pi},\, t \in [\pi,3\pi],\\\\
  \gamma_{\mathfrak{I}^{\gamma}}(t) = e^{it}, \, t \in [3\pi,4\pi], & \gamma_{\mathfrak{I}^{\gamma}}(t)= -3+\dfrac{t}{\pi},\, t \in [4\pi,6\pi]
\end{array}
$$
and so the winding number of $(\gamma, \mathfrak{I}^{\gamma})$ is $1.$ The pairs $(\gamma, i)$ and $(\gamma, \mathfrak{I}^{\gamma})$ are not c-homotopic.
\end{example}

%
%$\mathfrak{I}^{\gamma}: [a,b] \to \bS/\{\pm \Id \}$. Let $\cI^{\gamma}: [a,b]\to  \bS$ be a lift of $\mathfrak{I}^{\gamma}$ with initial point $\cI^{\gamma}(a)$ such that $[\cI^{\gamma}(a)]=\mathfrak{I}^{\gamma}(a)$. The loop $\cI^{\gamma}$ is homotopic (relatively to $\cI^{\gamma}(a)=\cI^{\gamma}(b)$) to the constant loop $\cI^{\gamma}(a)$. Therefore the loop
%  \[
%  \gamma(t)=x(t)+\cI^{\gamma}(t)y(t)
%  \]
%  is homotopic to the loop
%   \[
%  \gamma(t)=x(t)+\cI^{\gamma}(a)y(t)
%  \]
% i.e., to the associated complex loop $\gamma_{\cI^{\gamma}}$, in such a way that the homotopy acts only on the curve of the imaginary units and keeps, for all $t\in [a,b]$, the  real and imaginary components $x(t)$ and $y(t)$ of the loop $\gamma(t)$ fixed.
%Let  $[a,b]\times [c,d] \subset \bR^2$ and let $F : [a,b]\times [c,d]  \ra \bK \setminus \{0\}$ be a  continuous map.
%
%A continuous map  $\mathfrak{I}^{F}:[a,b]\times [c,d]  \ra \bS/ \{\pm \Id\}$ such that $F(t,s) \in
%  \C_{\mathfrak{I}^{F}(t,s)}$ for every $(t,s)\in[a,b]\times [c,d] $ is called a \emph{companion} of the map $F$.
%
%  If a companion $\mathfrak{I}^{F}$ of the map $F$ exists, then  $F$ is called a \emph{map with a companion}.
%
%  If the map $F$ has a unique companion $\mathfrak{I}^{F}$, then it is called a \emph{tame} map.

The notion of c-homotopy is particularly useful in this %  quaternionic and octonionic
 setting, because of the following result.
\begin{proposition} \label{c-homotopy separate}
Let $(\gamma_1, \mathfrak{I}^{\gamma_1}), (\gamma_2, \mathfrak{I}^{\gamma_2}) : [a,b]  \ra \bK \setminus \{0\}$ be two paths with companions and with the same endpoints $\gamma_1(a)=\gamma_2(a)=p$ and $\gamma_1(b)=\gamma_2(b)=q$. %Let $\cI^{\gamma_1}, \cI^{\gamma_2}: [a,b]  \ra \bS$ be lifts of $\mathfrak{I}^{\gamma_1}, \mathfrak{I}^{\gamma_2}$, with the same initial point.
%\[
% \gamma_1(t)=x_1(t)+\cI^{\gamma_1}(t)y_1(t)
%\]
%and
%\[
% \gamma_2(t)=x_2(t)+\cI^{\gamma_2}(t)y_2(t)
%\]
%Then $\gamma$ is untwisted if, and only if, it is c-homotopic to its shadow in $\mathbb C_{\mathfrak{I}^{\gamma}(a)}$.
Then the following statements are equivalent :
\begin{enumerate}
\item $(\gamma_1, \mathfrak{I}^{\gamma_1}), (\gamma_2, \mathfrak{I}^{\gamma_2})$ are c-homotopic;
\item  $\mathfrak{I}^{\gamma_1}$ and $\mathfrak{I}^{\gamma_2}$ are homotopic in $\bS/\left\{\pm Id\right\}$, and, in addition, for each of the shadows of $(\gamma_1, \mathfrak{I}^{\gamma_1})$ there is a shadow of $(\gamma_2, \mathfrak{I}^{\gamma_2})$ so that these two  shadows are homotopic   in $\C\setminus \{0\}$.
\end{enumerate}

\end{proposition}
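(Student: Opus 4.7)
The plan is to prove the two implications separately. For the easy direction $(1) \Rightarrow (2)$, I would invoke Proposition \ref{explanationhomotopy} directly: a c-homotopy $(F, \mathfrak{I}^F)$ is, by part (i), a homotopy of the companions in $\bS/\{\pm \Id\}$, and, by part (iii), its canonical form induces a homotopy in $\C \setminus \{0\}$ between one matched pair of shadows of $(\gamma_1, \mathfrak{I}^{\gamma_1})$ and $(\gamma_2, \mathfrak{I}^{\gamma_2})$. The remaining pair of conjugate shadows is handled by taking the complex conjugate of that very homotopy, since conjugation is a self-homeomorphism of $\C \setminus \{0\}$ that preserves homotopies with fixed endpoints.

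For the nontrivial direction $(2) \Rightarrow (1)$, I would reconstruct a c-homotopy $(F, \mathfrak{I}^F)$ explicitly from the two homotopies provided by (2). Let $G \colon [a,b]\times [0,1] \to \bS/\{\pm \Id\}$ be the companion homotopy with fixed endpoints. Since the square is simply connected and $\bS \to \bS/\{\pm \Id\}$ is a two-fold covering, $G$ lifts continuously to $\tilde G \colon [a,b]\times [0,1] \to \bS$; I choose the lift so that $\tilde G(t, 0) = \cI^{\gamma_1}(t)$, and then set $\cI^{\gamma_2}(t) := \tilde G(t, 1)$, which selects a specific continuous lift of $\mathfrak{I}^{\gamma_2}$. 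Replacing, if necessary, the paired shadow of $\gamma_2$ by its conjugate, hypothesis (2) yields a homotopy $H(t, s) = u(t, s) + i v(t, s)$ in $\C \setminus \{0\}$, with fixed endpoints, between the shadow of $\gamma_1$ corresponding to $\cI^{\gamma_1}$ and the shadow of $\gamma_2$ corresponding to $\cI^{\gamma_2}$. The candidate c-homotopy is then
\[
F(t,s) := u(t,s) + \tilde G(t,s)\, v(t,s), \qquad \mathfrak{I}^F(t,s) := [\tilde G(t,s)].
\]

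The verification that $(F, \mathfrak{I}^F)$ satisfies all conditions of Definition \ref{c-homotopy} reduces to a direct calculation based on the canonical forms of Proposition \ref{realization}. The identity $|F(t,s)|^2 = u(t,s)^2 + v(t,s)^2 = |H(t,s)|^2 > 0$ keeps $F$ inside $\bK \setminus \{0\}$; substituting $s = 0$ and $s = 1$ recovers the canonical forms of $\gamma_1$ and $\gamma_2$; and the constancies $F(a,s) = p$, $F(b, s) = q$ follow from the following remark. When $p \notin \R$, the fiber of $\bS \to \bS/\{\pm \Id\}$ above $\mathfrak{I}^{\gamma_1}(a) = \mathfrak{I}^{\gamma_2}(a)$ is the discrete set $\{\pm I_p\}$, so $s \mapsto \tilde G(a, s)$ is forced to be the constant $\cI^{\gamma_1}(a)$; combined with $H(a, s)$ being the fixed shadow endpoint of $\gamma_1$ at $a$, this gives $F(a, s) = p$. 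When $p \in \R$, the canonical form forces $v(a, s) = 0$, so $F(a, s) = \Re p = p$ regardless of $\tilde G(a, s)$; the analogous argument works at $t = b$. The main obstacle I anticipate is exactly this endpoint bookkeeping: the tight interplay between the discreteness of the covering fiber and the freedom to swap each shadow with its conjugate at the outset is what makes the matching consistent and the construction uniform across all cases, without having to split into subcases.
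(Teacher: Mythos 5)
Your proof takes essentially the same route as the paper. For $(2)\Rightarrow(1)$ the paper also lifts the companion homotopy to $\cI^{F}\colon[a,b]\times[0,1]\to\bS$ and assembles the c-homotopy through the very formula you wrote, $F=L_1+\cI^{F}L_2$ (in your notation $F=u+\widetilde G\,v$), with $\mathfrak I^{F}=[\cI^{F}]$; the verification of the four identities on $\partial([a,b]\times[0,1])$ is the same direct substitution. For $(1)\Rightarrow(2)$ the paper likewise just invokes Proposition \ref{explanationhomotopy}. Your additions — spelling out the lifting criterion for the two-fold covering $\bS\to\bS/\{\pm\Id\}$, the norm identity $|F|^2=u^2+v^2$ keeping $F$ off zero, and the endpoint discussion distinguishing $p\notin\R$ from $p\in\R$ — are more explicit than what the paper records, but they do not change the structure of the argument. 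One small caution: the phrase ``replacing, if necessary, the paired shadow of $\gamma_2$ by its conjugate'' does not by itself transport a homotopy; the correct remark (which you essentially make afterwards, via the endpoint constraints) is that when an endpoint of $\gamma_1=\gamma_2$ is non-real, the fixed-endpoint requirement in $\C\setminus\{0\}$ automatically pins the matching of lifts and shadows to agree with the one produced by $\widetilde G$, so no replacement is actually needed. The paper's proof makes the same implicit identification without comment.
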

\begin{proof}
Suppose first that \emph{(2)} holds. Then there exist:
\begin{itemize}
\item a homotopy $\bold{G}:[a,b]\times [0,1] \to \bS/\left\{\pm Id\right\}$ between $\mathfrak{I}^{\gamma_1}$ and $\mathfrak{I}^{\gamma_2}$;
\item a lift of  $\bold{G} $, i.e. a homotopy $G:[a,b]\times [0,1] \to \bS$  between a lift $\cI^{\gamma_1}$ of $\mathfrak{I}^{\gamma_1}$ and a lift $\cI^{\gamma_2}$ of $\mathfrak{I}^{\gamma_2}$;
\item  a homotopy $L=(L_1, L_2):[a,b]\times [0,1] \to \R^2\setminus\{(0,0)\}$  between a shadow of $\gamma_1$ and a shadow  of $\gamma_2$ (its ``conjugate" being a homotopy between the corresponding conjugate shadows).
\end{itemize}
In this situation, the map $F:[a,b]\times[0,1] \to \bK \setminus \{0\}$ defined by
\[
F(t,s)=L_1(t, s) +G(t,s)L_2(t,s)
\]
is a  homotopy between $\gamma_1$ and $\gamma_2$. Indeed, $F$ is obviously continuous, and such that, for all $t\in [a,b]$ and all $s\in [0,1]$,
\[ \begin{array}{l}
F(t,0)=L_1(t, 0) +G(t,0)L_2(t,0)=x_1(t)+\cI^{\gamma_1}(t)y_1(t)=\gamma_1(t);\\
F(t,1)=L_1(t, 1) +G(t,1)L_2(t,1)=x_2(t)+\cI^{\gamma_2}(t)y_2(t)=\gamma_2(t);\\
F(a,s)=L_1(a,s) +G(a,s)L_2(a,s)=x_1(a)+\cI^{\gamma_1}(a)y_1(a)=\gamma_1(a)=\gamma_2(a);\\
F(b,s)=L_1(b,s) +G(b,s)L_2(b,s)=x_1(b)+\cI^{\gamma_1}(b)y_1(b)=\gamma_1(b)=\gamma_2(b).
\end{array}
\]
Moreover, the continuous map $G:[a,b]\times[0,1] \to \bS$ defines, by construction, a companion of $F$ given by
\[
\mathfrak{I}^{F}(t,s)=[G(t,s)] = \bold{G}(t,s)
\]
for all $(t,s)\in [a,b]\times[0,1]$. As a consequence, $(F, \mathfrak{I}^{F})$ is a c-homotopy between $(\gamma_1, \mathfrak{I}^{\gamma_1}), (\gamma_2, \mathfrak{I}^{\gamma_2})$.

Let us now suppose that \emph{(1)} holds, i.e. that $(\gamma_1, \mathfrak{I}^{\gamma_1})$ and $(\gamma_2, \mathfrak{I}^{\gamma_2})$ are c-homotopic.
In this case $ \mathfrak{I}^{\gamma_1}$ and $\mathfrak{I}^{\gamma_2}$ are homotopic by definition, and the rest of the assertion follows from Proposition \ref{explanationhomotopy}.

%First of all, $\mathfrak{I}^{\gamma_1}$ and $\mathfrak{I}^{\gamma_2}$ are homotopic (see Definition \ref{c-homotopy}). Consider now the following continuous maps, for $(t,s)\in [a,b]\times[0,1]$:
%\[
%L_1(t,s)=\Re(F(t,s))
%\]
%\[
%L_2(t,s)=-\Re(G(t,s)F(t,s))
%\]
%By construction we have that, in $[a,b]\times[0,1]$,
%\begin{equation}\label{rappresentazione}
%F(t,s)=L_1(t,s)+G(t,s)L_2(t,s).
%\end{equation}
%We will conclude the proof by showing that $L=(L_1, L_2): [a,b]\times[0,1] \to \R^2\setminus\{(0,0)\}$ is a homotopy between a shadow of $\gamma_1$ and a shadow of $\gamma_2$ and that $G: [a,b]\times[0,1] \to \bS$ is a homotopy between $\cI^{\gamma_1}$ and $\cI^{\gamma_2}$. Indeed, using directly formula \eqref{rappresentazione} to represent the homotopy $F$, it is easy to check that on $[a,b]\times[0,1]$,
%\[
%L(t,0)=(L_1(t,0), L_2(t,0))=(x_1(t), y_1(t))
%\]
%\[
%L(t,1)=(L_1(t,1), L_2(t,1))=(x_2(t), y_2(t))
%\]
%\[
%L(a,s)=(L_1(a,s), L_2(a,s))=(x_1(a), y_1(a))=(x_2(a), y_2(a))
%\]
%\[
%L(b,s)=(L_1(b,s), L_2(b,s))=(x_1(b), y_1(b)=(x_2(b), y_2(b))
%\]
%and finally
%\[
%G(t,0)=\cI^{\gamma_1}(t), \ \ \ G(t,1)=\cI^{\gamma_2}(t)
%\]
%\[
%G(a,s)=\cI^{\gamma_1}(a)=\cI^{\gamma_2}(a), \ \ \ G(b,s)=\cI^{\gamma_1}(b)=\cI^{\gamma_2}(b).
%\]
\end{proof}

\begin{proposition}
Let $\gamma : [a,b]  \ra \bK \setminus \{0\}$ be a loop with   companion $\mathfrak{I}^{\gamma}$. Then $(\gamma, \mathfrak{I}^{\gamma})$ is untwisted if, and only if, it is c-homotopic to one of its (closed and conjugate) shadows in $\mathbb C_{\mathfrak{I}^{\gamma}(a)}$.
\end{proposition}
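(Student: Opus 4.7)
My plan is to reduce the equivalence to Proposition \ref{c-homotopy separate} applied to $(\gamma,\mathfrak{I}^{\gamma})$ and a distinguished ``realization'' of one of its shadows as a loop inside the slice $\C_{\mathfrak{I}^{\gamma}(a)}$. Fix a continuous lift $\cI^{\gamma}:[a,b]\to\bS$ of $\mathfrak{I}^{\gamma}$, set $I_0:=\cI^{\gamma}(a)$, and use the canonical form
\[
\gamma(t)=x(t)+\cI^{\gamma}(t)\,y(t),\qquad \gamma_{\cI^{\gamma}}(t)=x(t)+iy(t).
\]
I would realize $\gamma_{\cI^{\gamma}}$ as a curve in $\C_{I_0}$ by
\[
\tilde{\gamma}(t):=x(t)+I_0\,y(t),
\]
equipped with the constant companion $\mathfrak{I}^{\tilde{\gamma}}\equiv[I_0]$. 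A direct inspection of the canonical forms shows that the two shadows associated with $(\tilde{\gamma},[I_0])$ are exactly $x(t)\pm i y(t)$, i.e.\ they literally coincide with the two shadows of $(\gamma,\mathfrak{I}^{\gamma})$. Consequently the shadow-homotopy hypothesis in Proposition \ref{c-homotopy separate}(2), relating $(\gamma,\mathfrak{I}^{\gamma})$ and $(\tilde{\gamma},[I_0])$, is satisfied trivially by the constant homotopy in $\C\setminus\{0\}$.

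With this reduction both directions become statements about the companion alone. For the ``only if'' direction, I assume $(\gamma,\mathfrak{I}^{\gamma})$ is untwisted. Then $\mathfrak{I}^{\gamma}$ is homotopic to a constant in $\bS/\{\pm\Id\}$, hence homotopic to the constant loop $[I_0]$; moreover the lift $\cI^{\gamma}$ is itself a loop, from which $y(a)=y(b)$ and $\tilde{\gamma}$ is a genuine loop in $\C_{I_0}$ based at $\gamma(a)=\gamma(b)$. Both conditions in Proposition \ref{c-homotopy separate}(2) are fulfilled, yielding the desired c-homotopy between $(\gamma,\mathfrak{I}^{\gamma})$ and one of its closed shadows.

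For the ``if'' direction, I would start with a c-homotopy $(F,\mathfrak{I}^{F})$ between $(\gamma,\mathfrak{I}^{\gamma})$ and the shadow embedded in $\C_{I_0}$ with its constant companion $[I_0]$. By Proposition \ref{explanationhomotopy}(i), $\mathfrak{I}^{F}$ is a homotopy in $\bS/\{\pm\Id\}$ between $\mathfrak{I}^{\gamma}$ and the constant loop $[I_0]$. Thus $\mathfrak{I}^{\gamma}$ is null-homotopic and $(\gamma,\mathfrak{I}^{\gamma})$ is untwisted by definition.

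The main obstacle I expect is the clean verification that the realization $(\tilde{\gamma},[I_0])$ is truly a loop with companion whose two shadows match those of the original pair; this is where Proposition \ref{untwisted loop} enters to certify closure of $\tilde{\gamma}$ from the untwisted hypothesis, and Proposition \ref{realization} supplies the canonical-form bookkeeping needed when $\gamma$ meets $\R$ (where $y$ vanishes and the choice of lift of $\mathfrak{I}^{\gamma}$ has no effect on the decomposition). Once this matching is secured, the rest of the argument is a formal application of Propositions \ref{c-homotopy separate} and \ref{explanationhomotopy}.
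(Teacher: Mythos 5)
Your proof is correct and follows essentially the same route as the paper's: both directions hinge on realizing a shadow as the concrete loop $\tilde{\gamma}(t)=x(t)+I_0 y(t)$ in the slice $\C_{I_0}$ with constant companion $[I_0]$, noting that this pair has literally the same two shadows as $(\gamma,\mathfrak{I}^{\gamma})$, and then reading off the equivalence from the c-homotopy machinery. The only difference is one of explicitness: the paper compresses the argument into a reference to Proposition \ref{untwisted loop} and the simple connectivity of $\bS$, whereas you explicitly route through Proposition \ref{c-homotopy separate} (for the forward direction) and Proposition \ref{explanationhomotopy}(i) (for the converse), which makes the bookkeeping — endpoints matching, shadows coinciding, null-homotopy of $\mathfrak{I}^{\gamma}$ rel basepoint — more transparent than in the paper's terse version.
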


%\marginpar{loop with companion?}
\begin{proof}
%Let $\mathfrak{I}^{\gamma}$ be the companion of $\gamma$, and let $\cI^{\gamma}: [a,b]\to  \bS$ be a lift of $\mathfrak{I}^{\gamma}$ with initial point $\cI^{\gamma}(a)$ such that $[\cI^{\gamma}(a)]=\mathfrak{I}^{\gamma}(a)$.
%
%Suppose $\gamma$ is untwisted. Then there exists a homotopy (relatively to $\cI^{\gamma}(a)=\cI^{\gamma}(b)$)
%\[
%G: [a,b] \times [0,1] \to \bS
%\]
%between $\cI^{\gamma}$ and the constant loop $\cI^{\gamma}(a)$. Therefore the map
%  \[
%  F(t,s)=x(t)+G(t,s)y(t)
%  \]
%  is a c-homotopy between the tame loop
%  \[
%  \gamma(t)=F(t,0)=x(t)+G(t,0)y(t)=x(t)+\cI^{\gamma}(t)y(t)
%  \]
%  and its planar shadow
%   \[
%  \gamma(t)=x(t)+\cI^{\gamma}(a)y(t).
%  \]
%Notice that the homotopy  $F$ acts only on the curve of the imaginary units and keeps, for all $t\in [a,b]$, the  real and imaginary components $x(t)$ and $y(t)$ of the loop $\gamma(t)$ fixed.
%
%Suppose now that there exists a c-homotopy  $F : [a,b]\times [0,1]  \ra \bK \setminus \{0\}$  between $\gamma$ and its shadow, such that:
%\begin{enumerate}
%\item $F(0,s)=p$ and $F(1,s)=q$ for all $s\in [0,1]$;
%\item $F$ has a companion;
%\end{enumerate}

%It is a direct consequence of Proposition \ref{c-homotopy separate}.
If $(\gamma, \mathfrak{I}^{\gamma})$ is untwisted, then any lift $\cI^{\gamma}$ of the companion $\mathfrak{I}^{\gamma}$ with initial point $\cI^{\gamma}(a)$ is homotopic in $\bS$ to the constant loop $\cI^{\gamma}(a)$, and therefore %, by Proposition \ref{c-homotopy separate},
the loop $\gamma$ is c-homotopic to its (closed) shadow in $\C_{ \cI^{\gamma}(a)}$ (see Proposition \ref{untwisted loop}). On the other hand, if the loop with companion $(\gamma, \mathfrak{I}^{\gamma})$ is c-homotopic  to its shadow, then %by Proposition \ref{c-homotopy separate},
the lift of its companion $\mathfrak{I}^{\gamma}$ with initial point $\cI^{\gamma}(a)$ has to be homotopic in $\bS$ to the constant loop $\cI^{\gamma}(a)$. As a consequence the loop $(\gamma, \mathfrak{I}^{\gamma})$ is untwisted by definition.
\end{proof}

%
%It is now meaningful to give the next definition.
%
%\begin{definition}\label{strong homotopy}  Let $\gamma, \delta: [a,b]  \ra \bK \setminus \{0\}$ be two untwisted, tame loops. %with companions $\mathfrak{I}^{\gamma}, \mathfrak{I}^{\delta}:[a,b] \ra \bS/ \{\pm \Id\}$.
%%Let  $\gamma_{\cI^{\gamma}}, \gamma_{-\cI^{\gamma}}: [a,b] \to \C \setminus \{0\}$ be the complex loops associated to $\gamma$ and  $\delta_{\cI^{\delta}}, \delta_{-\cI^{\delta}}: [a,b] \to \C \setminus \{0\}$ those associated to $\delta$.
%%If $\mathfrak{I}^{\gamma}$ and $\mathfrak{I}^{\delta}$ are homotopic and
%If a complex loop associated to $\gamma$ is homotopic (in $\C\setminus\{0\}$) to a complex loop associated to $\delta$, then we say that $\gamma$ and $\delta$ are \emph{strongly homotopic}, $\gamma\cong\delta$.
%\end{definition}

The notion of c-homotopy is suitable to comply with the meaning of the winding number of loops in the setting of $\bK\setminus\{0\}$.
In this panorama, all untwisted tame loops play a special role: any such a loop has an ``intrinsically defined" winding number that depends only on its geometric properties. Indeed, we can state the following result.

\begin{theo}\label{c-omotopia}
Let $\gamma, \delta: [a,b]  \ra \bK \setminus \{0\}$ be two untwisted, tame loops. Then $\gamma$ and $\delta$ are  c-homotopic if, and only if, $\emph{wind}(\gamma)=\emph{wind}(\delta)$.

\end{theo}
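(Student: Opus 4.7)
The plan is to invoke Proposition \ref{c-homotopy separate}, which characterises c-homotopy in terms of (i) a homotopy of companions in $\bS/\{\pm\Id\}$ and (ii) a homotopy of paired shadows in $\C\setminus\{0\}$. Both invariants will reduce, for untwisted tame loops, to the single non-negative integer $\emph{wind}(\gamma)$, so the theorem is essentially a dictionary between c-homotopy and the two classical homotopy classifications on $\bR\bP^{2^s-2}$ and on $\C\setminus\{0\}$.

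For the direction ``$\gamma\simeq_c\delta\Rightarrow\emph{wind}(\gamma)=\emph{wind}(\delta)$'', I would start from a c-homotopy $(F,\mathfrak{I}^F)$ and apply Proposition \ref{explanationhomotopy}(iii) to extract a homotopy in $\C\setminus\{0\}$ between a shadow $\gamma_{\cI^\gamma}$ of $\gamma$ and a shadow $\delta_{\cI^\delta}$ of $\delta$. Homotopy invariance of the classical complex winding number then gives $\emph{wind}(\gamma_{\cI^\gamma})=\emph{wind}(\delta_{\cI^\delta})$; taking absolute values yields $\emph{wind}(\gamma)=\emph{wind}(\delta)$ by Definition \ref{windingnew}.

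For the converse, assume $\emph{wind}(\gamma)=\emph{wind}(\delta)=n$. I verify the two clauses of Proposition \ref{c-homotopy separate}. For (i), since both loops are untwisted, the companions $\mathfrak{I}^\gamma$ and $\mathfrak{I}^\delta$ are null-homotopic loops in $\bS/\{\pm\Id\}$; because $\pi_1(\bS/\{\pm\Id\})=\Z/2$ is abelian, all null-homotopic loops lie in one (free) homotopy class, so $\mathfrak{I}^\gamma\simeq\mathfrak{I}^\delta$. For (ii), of the two conjugate shadows of $\delta$ I pick the one, call it $\delta_{\cI^\delta}$, whose signed complex winding number equals that of a chosen shadow $\gamma_{\cI^\gamma}$ of $\gamma$ (both are $\pm n$); the classical identification $\pi_1(\C\setminus\{0\})=\Z$ via the winding number then provides a free homotopy between them in $\C\setminus\{0\}$. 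Proposition \ref{c-homotopy separate} (whose proof gives an explicit formula $F(t,s)=L_1(t,s)+G(t,s)L_2(t,s)$) then produces the required c-homotopy.

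The main obstacle I anticipate is basepoint bookkeeping: Definition \ref{c-homotopy} requires a c-homotopy to fix endpoints, so the three freely-homotopic pairs produced above (the companions in $\bS/\{\pm\Id\}$, the selected shadows in $\C\setminus\{0\}$, and the loops themselves in $\bK\setminus\{0\}$) must be upgraded to basepoint-preserving homotopies whose boundary data are mutually compatible. This is routine because $\bK\setminus\{0\}\simeq S^{2^s-1}$ is simply connected for $s=2,3$, while $\bS/\{\pm\Id\}$ and $\C\setminus\{0\}$ are path-connected: one conjugates each homotopy by suitable paths joining the relevant basepoints, and then checks that the resulting data plug into the formula $F(t,s)=L_1(t,s)+G(t,s)L_2(t,s)$ to yield a genuine c-homotopy from $(\gamma,\mathfrak{I}^\gamma)$ to $(\delta,\mathfrak{I}^\delta)$.
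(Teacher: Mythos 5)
Your structure exactly matches the paper's: reduce via Proposition \ref{c-homotopy separate}, dispose of the companion clause using the untwisted hypothesis, and invoke $\pi_1(\C\setminus\{0\})\cong\Z$ for the shadow clause; your forward implication is handled correctly.

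The gap is in the step you label ``routine basepoint bookkeeping.'' To apply Proposition \ref{c-homotopy separate}, the homotopy between a shadow of $(\gamma,\mathfrak{I}^\gamma)$ and a shadow of $(\delta,\mathfrak{I}^\delta)$ must be an \emph{endpoint-fixing} homotopy, because the reconstruction formula $F(t,s)=L_1(t,s)+G(t,s)L_2(t,s)$ has to return $\gamma$ at $s=0$, $\delta$ at $s=1$, and be constant at $t=a$ and $t=b$. Conjugating a shadow by a path in $\C\setminus\{0\}$ to relocate its basepoint produces a loop which is no longer a shadow of $\delta$, so it cannot be plugged into that formula; the conjugation device you invoke is therefore unavailable. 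Now if the common endpoint $p=\gamma(a)=\delta(a)$ is not real, the two shadows of $\gamma$ are based at distinct points $z_0$ and $\bar z_0$ in $\C$, and likewise for $\delta$. Hence there is exactly one shadow of $\delta$ sharing a basepoint with a fixed shadow of $\gamma$, and its \emph{signed} winding number may disagree with that of the $\gamma$-shadow even when the absolute values coincide. Concretely, $\gamma(t)=ie^{it}$ and $\delta(t)=ie^{-it}$, $t\in[0,2\pi]$, viewed in $\C_i\subset\H$, are untwisted tame loops based at $i$ with $\emph{wind}(\gamma)=\emph{wind}(\delta)=1$, yet their $i$-based shadows have signed winding numbers $+1$ and $-1$ and are not homotopic rel endpoints, so these loops are not c-homotopic. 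The paper's own proof, which appeals to $\pi_1(\C\setminus\{0\})\cong\Z$ without confronting this sign--basepoint coupling, is equally terse on the point, so you have reproduced its reasoning faithfully; but the difficulty you dismissed as bookkeeping is a genuine obstruction, not a technicality, and it is not closed by the argument as written.
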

\begin{proof}
By Proposition \ref{c-homotopy separate}, $\gamma$ and $\delta$ are  c-homotopic if, and only if, the unique companions $ \mathfrak{I}^{\gamma}$ and $ \mathfrak{I}^{\delta}$ are homotopic and a shadow of $(\gamma, \mathfrak{I}^{\gamma})$  is homotopic to a shadow of $(\delta, \mathfrak{I}^{\delta})$, in $\C\setminus\{0\}$.
According to Definition \ref{windingnew}, the winding number of $(\gamma, \mathfrak{I}^{\gamma})$ (or $(\delta, \mathfrak{I}^{\delta})$)  is defined as the absolute value of the winding number of one of the two (closed) shadows of $(\gamma, \mathfrak{I}^{\gamma})$ (or  $(\delta, \mathfrak{I}^{\delta})$). Therefore the proof is a straightforward consequence of the properties of the fundamental group $\Pi_1(\C\setminus\{0\})\equiv \mathbb Z$, where the class of each loop is determined by its winding number (with respect to zero).
\end{proof}

%An easy consequence of this last result has an interesting geometrical meaning: all loops with companion, constructed on a same ``support"  loop $\gamma$, and untwisted, share the same winding number.
%
%\begin{corollary}
%Let $\gamma: [a,b]  \ra \bK \setminus \{0\}$ be a loop, and let  $\mathfrak{I}_1^{\gamma}, \mathfrak{I}_2^{\gamma}$ be two companions of $\gamma$. If  both loops $(\gamma, \mathfrak{I}_1^{\gamma})$ and $(\gamma, \mathfrak{I}_2^{\gamma})$ are untwisted, then
%$$\emph{wind}(\gamma, \mathfrak{I}_1^{\gamma})=\emph{wind}(\gamma, \mathfrak{I}_2^{\gamma}),$$
%i.e., the two loops share the same winding number.
%\end{corollary}
%\begin{proof} Since any lift of $\mathfrak{I}_1^{\gamma}$ and any lift of $\mathfrak{I}_2^{\gamma}$ having the same initial point are homotopic by hypothesis, Proposition \ref{c-homotopy separate} guarantees that $(\gamma, \mathfrak{I}_1^{\gamma})$ and $(\gamma, \mathfrak{I}_2^{\gamma})$ are c-homotopic. Now in turn Theorem \ref{c-omotopia} states that wind$(\gamma, \mathfrak{I}_1^{\gamma})= $\ wind$(\gamma, \mathfrak{I}_2^{\gamma})$.
%\end{proof}
%

%\marginpar{\color{blue} REVISITED UNTIL HERE}

The given definition of winding number, which has particularly transparent geometrical meanings, cannot be  adopted as it is in the twisted case, due to the two following results.

\begin{proposition}\label{conjugate endpoints}
Let $\gamma : [a,b]  \ra \bK \setminus \{0\}$ be a loop with a companion $\mathfrak{I}^{\gamma}$. Then $(\gamma, \mathfrak{I}^{\gamma})$ is twisted if, and only if, for any chosen non real initial point of $\gamma$, any shadow associated with $(\gamma, \mathfrak{I}^{\gamma})$ has conjugate endpoints.
\end{proposition}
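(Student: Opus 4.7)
The plan is to exploit the canonical form of $(\gamma, \mathfrak{I}^{\gamma})$ together with the fundamental observation about the two-sheeted covering $\bS \to \bS/\{\pm\Id\}$ by the simply connected sphere $\bS$. In fact the proof is essentially the twisted counterpart of Proposition \ref{untwisted loop}, and runs parallel to it.

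First, I would pick a non-real initial point of $\gamma$ (by re-parametrizing cyclically if needed, which is harmless for loops) so that $\gamma(a)=\gamma(b)\notin\R$, and fix one of the two continuous lifts $\cI^{\gamma}:[a,b]\to\bS$ of $\mathfrak{I}^{\gamma}$ (the argument for the other lift is identical, since the two shadows are complex conjugates of each other and conjugate endpoints is preserved under conjugation). By Proposition \ref{realization} I can write the canonical form
\[
\gamma(t) = x(t) + \cI^{\gamma}(t)\,y(t),
\]
with $x,y:[a,b]\to\R$ continuous. The closure condition $\gamma(a)=\gamma(b)$, together with $x(a)=\Re\gamma(a)=\Re\gamma(b)=x(b)$, then yields
\[
\cI^{\gamma}(a)\,y(a) = \cI^{\gamma}(b)\,y(b).
\]

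Next, I would use the classification of loops in $\bS/\{\pm\Id\} \cong \R\P^{\dim_{\R}\bK-2}$: since $\bS$ is simply connected (it is $S^{\dim_{\R}\bK-2}$ with $\dim_{\R}\bK-2\geq 2$) and the projection $\bS\to\bS/\{\pm\Id\}$ is the universal double cover, the loop $\mathfrak{I}^{\gamma}$ is null-homotopic in $\bS/\{\pm\Id\}$ (i.e., untwisted) if and only if its lift closes, $\cI^{\gamma}(a)=\cI^{\gamma}(b)$; otherwise the lift has antipodal endpoints, $\cI^{\gamma}(a)=-\cI^{\gamma}(b)$, and $(\gamma,\mathfrak{I}^{\gamma})$ is twisted. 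Inserting these two alternatives into the displayed identity above gives respectively $y(a)=y(b)$ (untwisted case, equal shadow endpoints) and $y(a)=-y(b)$ (twisted case).

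Finally, in the twisted case the endpoints of the associated shadow $\gamma_{\cI^{\gamma}}(t)=x(t)+iy(t)$ become
\[
\gamma_{\cI^{\gamma}}(b) = x(b)+iy(b) = x(a)-iy(a) = \overline{\gamma_{\cI^{\gamma}}(a)},
\]
so the shadow has conjugate endpoints, and since $y(a)\neq 0$ (the initial point is non-real) these endpoints are genuinely distinct, not merely equal real numbers. Conversely, if some shadow has conjugate endpoints $\gamma_{\cI^{\gamma}}(b)=\overline{\gamma_{\cI^{\gamma}}(a)}$ with $y(a)\neq 0$, then $y(a)=-y(b)$, which combined with $\cI^{\gamma}(a)y(a)=\cI^{\gamma}(b)y(b)$ and $y(a)\neq 0$ forces $\cI^{\gamma}(a)=-\cI^{\gamma}(b)$, showing $\mathfrak{I}^{\gamma}$ does not lift to a loop in $\bS$ and is therefore not null-homotopic.

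The only subtle point—which is exactly why the hypothesis of a non-real initial point is needed—is that when $y(a)=0$ the two alternatives collapse: both equal endpoints and conjugate endpoints are automatically realized, and the dichotomy twisted/untwisted can no longer be read off from the shadow at that base point. Choosing a non-real base point removes this degeneracy and makes the equivalence sharp.
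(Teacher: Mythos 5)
Your proof is correct and follows essentially the same route as the paper's: extract the canonical form $\gamma(t)=x(t)+\cI^{\gamma}(t)y(t)$, derive $\cI^{\gamma}(a)y(a)=\cI^{\gamma}(b)y(b)$ from closure of $\gamma$, and translate ``twisted'' into ``the lift $\cI^{\gamma}$ has antipodal endpoints'' via the double cover $\bS\to\bS/\{\pm\Id\}$ (the paper invokes this implicitly while you make the covering-space justification explicit). The only cosmetic difference is that you spell out the degeneracy at $y(a)=0$ as a remark, whereas the paper addresses it by assuming nonreal endpoints only where needed in the converse direction.
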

\begin{proof}
%Since the loop is tame, it remains so if we choose the initial/final point of the loop arbitrarily.\marginpar{maybe add here that we choose a nonreal initial/final point.}
Let $\gamma_{\cI^{\gamma}}: [a,b] \to \C \setminus \{0\}$,
 $
 \gamma_{\cI^{\gamma}}(t)=x(t)+iy(t),
$
be one of the shadows associated with $\mathfrak{I}^{\gamma}$.

If the loop $(\gamma, \mathfrak{I}^{\gamma})$  is twisted, then the lift $\cI^{\gamma}$ is not closed, and hence it has opposite endpoints. Therefore, the path
  \[
  \gamma(t)=x(t)+\cI^{\gamma}(t)y(t)
  \]
being  closed, the continuous function $y: [a,b]\to \R$ is such that $y(a)=-y(b)$. Hence the associated shadow  $
 \gamma_{\cI^{\gamma}}(t)=x(t)+iy(t)
$
has conjugate endpoints.

On the other hand, suppose the associated shadow $\gamma_{\cI^{\gamma}}: [a,b] \to \C \setminus \{0\}$,
 $
 \gamma_{\cI^{\gamma}}(t)=x(t)+iy(t)
$,
has conjugate  nonreal endpoints. Then $y(a)=-y(b) \ne 0$ and, the path
  \[
  \gamma(t)=x(t)+\cI^{\gamma}(t)y(t)
  \]
being  closed by assumption, we obtain $\cI^{\gamma}(a)=-\cI^{\gamma}(b)$ and so the lift $\cI^{\gamma}$ of $\mathfrak{I}^{\gamma}$ is not closed. In conclusion, $(\gamma, \mathfrak{I}^{\gamma})$  is twisted.
\end{proof}

%The following consequence clarifies a difference between twisted and untwisted tame loops.

\begin{corollary} \label{realendpoints}
Let $\gamma: [a,b] \ra \bK \setminus \{0\}$ be a loop with companion $\mathfrak{I}^{\gamma}$. Then the two shadows associated with $\mathfrak{I}^{\gamma}$ are closed if, and only if, the endpoints of $\gamma$ are real.
\end{corollary}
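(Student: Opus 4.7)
The plan is to deduce the corollary directly from Proposition \ref{conjugate endpoints}, reducing it to the elementary observation that two complex conjugate numbers coincide if and only if both are real. Fix a continuous lift $\cI^{\gamma}:[a,b]\to\bS$ of $\mathfrak{I}^{\gamma}$ and work with the canonical form $\gamma(t) = x(t) + \cI^{\gamma}(t)\,y(t)$; the two shadows are then the complex conjugate paths $\gamma_{\pm\cI^{\gamma}}(t) = x(t)\pm i\,y(t)$.

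For the ``if'' implication I would argue directly from the canonical form. If $\gamma(a)=\gamma(b)\in\R$, then since $\cI^{\gamma}(a),\cI^{\gamma}(b)\in\bS$ are non-real imaginary units, the decompositions $\gamma(a)=x(a)+\cI^{\gamma}(a)y(a)\in\R$ and $\gamma(b)=x(b)+\cI^{\gamma}(b)y(b)\in\R$ force $y(a)=y(b)=0$, and hence $x(a)=\gamma(a)=\gamma(b)=x(b)$. Each shadow then satisfies
\[
\gamma_{\pm\cI^{\gamma}}(a) = x(a)\pm i\cdot 0 = x(b)\pm i\cdot 0 = \gamma_{\pm\cI^{\gamma}}(b),
\]
so both shadows are closed.

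For the ``only if'' implication I would invoke Proposition \ref{conjugate endpoints} to split into the twisted and untwisted subcases. In the twisted subcase, Proposition \ref{conjugate endpoints} tells us that the shadow endpoints are conjugate, i.e.\ $\gamma_{\cI^{\gamma}}(a)=z$ and $\gamma_{\cI^{\gamma}}(b)=\bar z$; closedness of the shadow then gives $z=\bar z$, so $\Im z=y(a)=0$, and therefore $\gamma(a)=x(a)\in\R$. In the untwisted subcase, Proposition \ref{untwisted loop} ensures that the shadows are automatically closed, and the loop condition $\gamma(a)=\gamma(b)$ combined with $\cI^{\gamma}(a)=\cI^{\gamma}(b)$ and $y(a)=y(b)$ reduces the equivalence to the trivial observation that a real value of $\gamma$ at the endpoint coincides with the corresponding real shadow endpoint.

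The main obstacle is cleanly organizing the twisted/untwisted dichotomy so as to apply Proposition \ref{conjugate endpoints} uniformly; once this dichotomy is in place, the substantive content is the twisted subcase, where the equality of two complex conjugate numbers forces reality. Everything else is bookkeeping with the canonical decomposition.
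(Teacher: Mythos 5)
Your ``if'' direction and your twisted subcase of the ``only if'' direction are correct and follow the paper's route (the paper's entire proof is a citation of Proposition~\ref{conjugate endpoints}). The problem is the untwisted paragraph: after correctly noting that Proposition~\ref{untwisted loop} makes the shadows \emph{automatically} closed for an untwisted loop, you conclude that the equivalence ``reduces to a trivial observation,'' but in fact it does not reduce to anything---it simply fails. An untwisted loop with non-real endpoints has closed shadows (you just said so), yet its endpoints are not real, so the ``only if'' implication is false in that case. A concrete counterexample is $\gamma(t)=\cos t+i\sin t$ on $[\pi/2,5\pi/2]$ in the slice $\C_i$: it is untwisted with constant companion $[i]$, both shadows are the unit circle and hence closed, but $\gamma(\pi/2)=\gamma(5\pi/2)=i\notin\R$.

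What is going on is that Corollary~\ref{realendpoints} is implicitly a statement about \emph{twisted} loops: the preceding paragraph says the two results address ``the twisted case,'' and the paper's one-line proof invokes only Proposition~\ref{conjugate endpoints}, which is a characterization of twistedness. Under the added hypothesis that $(\gamma,\mathfrak{I}^\gamma)$ is twisted, the argument is exactly what you wrote for that subcase: $y(a)=-y(b)$ forces $y(a)=y(b)=0$ as soon as the shadows close, hence $\gamma(a)\in\R$; conversely real endpoints give $y(a)=y(b)=0$ and $x(a)=x(b)$, so both shadows close. You should either add the twisted hypothesis or, if you want to keep the statement general, explicitly flag that the untwisted case only gives the (trivial) implication ``real endpoints $\Rightarrow$ closed shadows,'' not the converse.
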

\begin{proof}
The proof follows immediately from Proposition \ref{conjugate endpoints}.
\end{proof}

We might be encouraged to think that,  in the case of a loop with companion which is twisted, we should first parameterise the loop in such a way that it has real endpoints (see Proposition \ref{real axis}), and then use Definition \ref{windingnew}. Indeed, this approach gives a weird result, if tested, for instance, in the case of the twisted, tame loop $\lambda$ presented in the next example.

\begin{example}\emph{
%We present an example of a twisted tame loop.
Consider the loop $\lambda$  in the hyperplane
        of $\H$  generated by the orthogonal units $\{1,i,j\}$. The path consists of
  several arcs: the arc of  parabola $ t + 1 + t^2(i + j), t \in [-1,1],$
  the segments from $(2,1,0)$ to $(2,1,1)$, from $(2,1,0)$ to $(0,1,0)$
  and  from $(0,0,1)$ to $(0,1,1),$ the halfcircle $\cos t + i
  \sin t, t \in [\pi/2,3\pi/2]$ and the quarter of circle $i\cos t + j
  \sin t, t \in [\pi/2,\pi].$ Let the orientation be such that it
  coincides with the positive orientation of the halfcircle part in
  the plane containing $1,i$. The path intersects the real axis at points $z = 1$ and  $z =
  -1.$
  }
\end{example}

%\begin{figure}[H]
%\centering
% \includegraphics[scale=0.4]{lambda.pdf}
% \begin{center}\caption{\footnotesize The twisted, tame loop  $\lambda$  }
% \label{twistedfigure}
% \end{center}
%\end{figure}
\begin{figure}[H]
$\begin{array}{ccc}
    \includegraphics[width=0.3\textwidth]{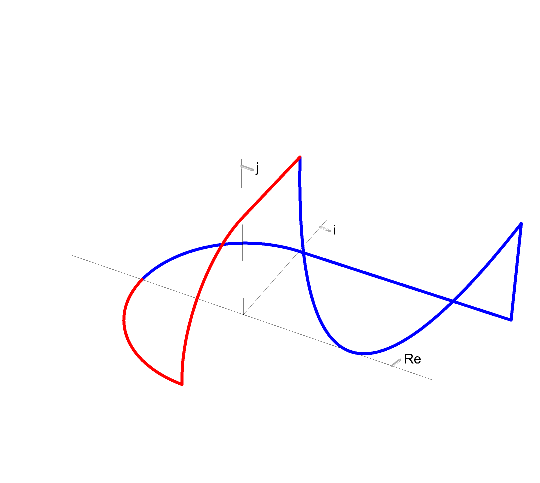}   &
\includegraphics[width=0.30\textwidth]{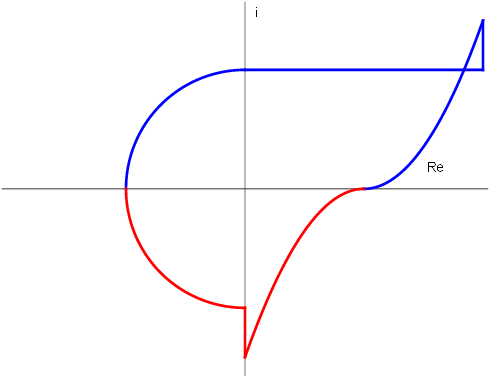} &
\includegraphics[width=0.3\textwidth]{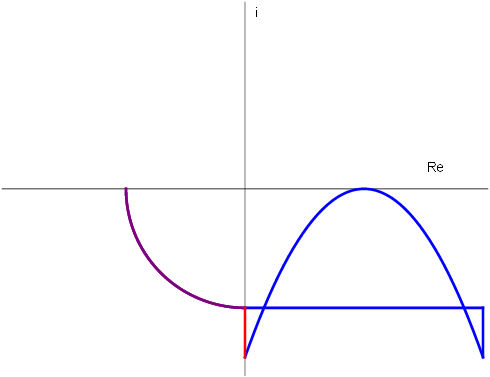}\\
\mbox{\footnotesize(a)}&\mbox{\footnotesize(b)}&\mbox{\footnotesize(c)}\\
& &
\end{array}$
%\begin{left}
\caption{\footnotesize{From left to right: (a) the path $\gamma$  and two of its shadows (b), (c) \phantom{aaaaaaaaaaaaaaaaa}}}
\label{twistedfigure2}
%\end{left}
\end{figure}
%%
%%\begin{figure}
%%    \centering
%%    \begin{subfigure}[b]{0.3\textwidth}
%%        \includegraphics[width=\textwidth]{T1.eps}
%%        \caption{A gull}
%%        \label{fig:gull}
%%    \end{subfigure}
%%    \begin{subfigure}[b]{0.3\textwidth}
%%        \includegraphics[width=\textwidth]{T2.eps}
%%        \caption{A tiger}
%%        \label{fig:tiger}
%%    \end{subfigure}
%%    \begin{subfigure}[b]{0.3\textwidth}
%%        \includegraphics[width=\textwidth]{T3.eps}
%%        \caption{A mouse}
%%        \label{fig:mouse}
%%    \end{subfigure}
%%    \caption{Pictures of animals}\label{fig:animals}
%%\end{figure}

In the previous example, the proposed winding number of the twisted,
tame loop $\lambda$ would be $1$ if the loop is parameterised with
real endpoints equal to $1\in \R$ (see Figure \ref{twistedfigure2} (b)). On the other hand, the same
loop
$\lambda$ parameterised with endpoints equal to $-1\in \R$ would have
winding number $0$ (see Figure \ref{twistedfigure2} (c)). What we just
illustrated clarifies that a notion of winding number for twisted,
tame loops in $\bK \setminus \{0\}$ (if it exists) has to be given by
following a different approach.

 In the spirit of the above example and Proposition
  \ref{conjugate endpoints} the definition of the winding number for a
  closed tame twisted loop $\gamma: [a,b] \ra \bK \setminus \{0\}$
  cannot be given by considering
  the change of the argument since
  this  depends on the choice of the initial point.

  Assume that $\gamma$ is a twisted loop in $\bK\setminus\{0\}$ which
  intersects both the positive and the negative real axis;
 % \marginpar{intersects the positive and negative real axes}
let $\gamma_{\cI^{\gamma}}: [a,b] \to \C \setminus \{0\}$, $
\gamma_{\cI^{\gamma}}(t)=x(t)+iy(t), $ be one of the shadows
associated with $\gamma$. Let $\arg^{\gamma}(t), t \in [a,b],$ be the
corresponding argument and choose the initial argument so that
%\marginpar{choose?}
$\arg^{\gamma}(a) \in [0, \pi].$
The set $\Delta = \{\arg^{\gamma}(a)
\mbox{ for all possible initial points}\}$ is an interval contained in
$[0, \pi].$ Because the
loop $\gamma$
is twisted, the argument at $b$ is
$\arg^{\gamma}(b) = 2 n \pi - \arg^{\gamma}(a)$ and hence
$$\arg^{\gamma}(b)-\arg^{\gamma}(a)=2 n \pi - 2\arg^{\gamma}(a)$$ and
is not an integer multiple of $2\pi$ unless $\arg^{\gamma}(a) =
0,\pi.$ Even if we set the initial point to be real, so that the
change of argument is $2n\pi,$ the number $n$ can have more than one
value as shown in the following example.

\begin{example}{\em
  Let $\gamma_1$ be the positively oriented unit circle with initial
  point $-1$ and companion $i$ and define $\gamma_2(t):=\cos (t)+i\sin
  (t)+j(\cos (t)+1), t \in [-\pi, \pi]$.  Choose the lift of the
  companion $\mathcal I_2$ of $\gamma_2$ so that $\mathcal I_2(-\pi) =
  i$ and $\mathcal I_2(-\pi) = -i.$ Let
  $\gamma = \gamma_1^m \cdot \gamma_2$  denote
  the loop composed first of $m$ copies of
  $\gamma_1$ followed by a copy of $\gamma_2.$ If
  the initial point is assumed to be
  the point $-1$ on the first copy of $\gamma_1$, then the change of
  the argument is $2\pi m.$ If the initial point is the point $-1$ on
  the second copy of $\gamma_1$,
  then the $m-1$ copies of $\gamma_1$
  before $\gamma_2$ give the
  winding number $m-1$, but then the curve $\gamma_2$ reverses the
  orientation so the last copy of $\gamma_1$ has negative orientation
  with respect to the unit $-i,$ hence the winding number is $m-2.$
  Starting at $-1$ on the third copy
 of $\gamma_1$,
  would therefore result in the
  winding number $m-4$ and so forth.
  }
\end{example}

A few words seem now appropriate, to present a suggestive geometrical explanation of the reason why the notion of winding number as given in the case of untwisted loops does not work for the case of twisted loops. Indeed, consider an untwisted loop $\gamma: [a,b] \ra \bK \setminus \{0\}$
\[
  \gamma(t)=x(t)+\cI^{\gamma}(t)y(t)
  \]
If we regard all points $\{x(t)\} _{t\in [a,b]}$ as distinct points except the endpoints, such a $\gamma$ has values in the surface $S_\gamma=\{x(t)+\cI^{\gamma}(t)s : t \in [a,b], s\in \mathbb R\}$; since $\gamma$ is untwisted, then $\cI^{\gamma}: [a,b]\to  \bS$ is a loop, and hence it is homotopic to the constant loop $\cI^{\gamma}(0)=\cI^{\gamma}(1)$. As a consequence,  the surface $S_\gamma$ is homeomorphic to a  twodimensional  cylinder. Therefore there is a notion of $\gamma(t)$ being a point of this surface lying on one side or the other of the ``real axis''  formed by the points $\{x(t)\} _{t\in [a,b]}$, and hence a notion of winding number with respect to the origin becomes possible:  the situation reduces, naively speaking, to a  planar one. If instead $\gamma: [a,b] \ra \bK \setminus \{0\}$ is twisted, then the path $\cI^{\gamma}: [a,b]\to  \bS$ has antipodal endpoints, and the surface $S_\gamma$ turns out to be homeomorphic to a Moebius strip. In this last situation, the lack of orientability seems to exclude the possibility of defining coherently a winding number for the loop $\gamma$.

%\marginpar{\color{blue} TO HERE}

%\marginpar{\color{brown} ARRIVED HERE}

\section{Obstructions to the existence of lifts of a path}\label{sec4}
In this section we present  sufficient conditions for a path  to have a lift, a companion and to be tame.

As already mentioned, if the path $\gamma: [a,b] \ra \bK
\setminus\{0\}$ misses the real axis, then the lift to $\mathscr{E}_{\bK}^+$always exists. On
the other hand, if $\gamma([a,b])\subset \mathbb{R}$, then,
necessarily either $\gamma([a,b])\subset \mathbb{R}^- $ and we have
the lifts of the form $\Gamma(t) = \log|\gamma(t)| + I (2k+1)\pi$, or
$\gamma([a,b])\subset \mathbb{R}^+ $ and then we have the lifts of the
form $\Gamma(t) = \log|\gamma(t)| + I 2k\pi$ for any $I \in \bS.$ From
now on assume that $\gamma([a,b])$ is not entirely contained in the
real axis but it intersects it.

\begin{definition} For a  path $\gamma: [a,b] \ra \bK \setminus\{0\}$  we define the set $T :=\gamma^{-1}(\R)$ to be the
  {\em obstruction set} (for the lift of $\gamma$)  and its points as {\em obstruction parameters}.
\end{definition}

It is clear that the necessary assumption for a lift of $\gamma$ to
$\mathcal E_{\bK}^+$ to exist is the requirement that $\gamma$ has a
lift on a neighbourhood of every parameter $t,$ in particular, for
each $t \in T.$ It turns out that the existence of local lifts does
not necessarily imply the existence of a global lift; recall that
complex curves avoiding $0$ always have local and global lifts.\\

In what follows, we start establishing the conditions on the
behaviour of $\gamma$ locally near its obstruction parameters in order to guarantee the existence first of local lifts and local companions and then of a global lift and a global companion.

As these conditions depend on the structure of the obstruction set, we start  by considering paths with a finite obstruction set.

\begin{definition}\label{TameDefinition}
      Let the path $\gamma(t)=x(t)+Y(t): [a,b] \ra \H \setminus \{0\}$ be such that $T = \gamma^{-1}(\R) = \{a \leq t_1 <\ldots < t_p \leq b\}. $
      Consider the limits
     \begin{equation}\label{limits}
       \lim_{t\ra t_s^{\pm}}\frac{Y(t)}{|Y(t)|}.
     \end{equation}
     Let $t_s \in (a,b).$ Then
     \begin{trivlist}{}{}
        \item[1)] $\gamma$ is {\em tame} at $t_s$ if both limits
        are either equal or opposite. In particular,
        if these limits are opposite, then the parameter  $t_s$ is called a {\em flip}, whereas if they are the same it is called {\em a bounce};
        \item[2)] $\gamma$ is {\em semi-tame} at $t_s$ if it is not tame at $t_s$ but both limits in (\ref{limits}) exist;
        \item[3)] $\gamma$ is {\em not tame} at $t_s$ if at least one
          of the limits in (\ref{limits}) does not exist.
      \end{trivlist}
       If $t_s = a$ (resp. $t_s = b$) then the path is tame at $t_s$ from the right (left)
      if the right (left) limit in (\ref{limits}) exists and not tame in all other cases.

      If, in addition, the path $\gamma$ is closed, we adapt the definition of tameness at the endpoints in the natural way. In particular,
      $\gamma$ is semi-tame at $a \simeq b$ if it is tame at $a$  from the right and at $b$ from the left. If the limits are the same, then $a \simeq b$ is called a {\em bounce} and if they are opposite it is called a {\em flip}. In all other cases $\gamma$ is not tame at $a \simeq b.$
\end{definition}

\begin{remark}{\em
 A path $\gamma$ in the Example \ref{esempio1} (b) does not have the limit  (\ref{limits}) at $t =0.$}
\end{remark}

%%
%%\begin{remark}\label{SemiTameRemark} {\em If the curve $\gamma$ is semi-tame (but not tame) at $t_s \in (a,b),$
%%    then necessary conditions for a lift $\Gamma$ of $\gamma$ to exist in a neighborhood of $t_s$ are
%%     $\gamma(t_s) \in (0,\infty)$ and  $\Arg^{\gamma}(t_s) = 0$.
%%    In this case  a lift on $\mathcal{E}_\bK^+$
%%    of $\gamma$ restricted to $ [t_s,t_{s+1}]$ can be defined using
%%    $\Arg_k$ for $k = 0,-1$ and, similarly,
%%a lift of $\gamma$  can be defined in the interval
%%$[t_{s-1},t_s]$, though the choice of $k$  may differ
%%passing from one interval to the other.
%%}\end{remark}

\begin{remark}
{\em The definition of tameness of $\gamma$ at parameters $\{t_1,\ldots, t_p\}$ means precisely that the projectivized imaginary unit function $[{\mathcal J}(\gamma(t))]$ defined on
$[a,b] \setminus \{t_1,\ldots, t_p\}$ has a continuous extension to $\{t_1,\ldots, t_p\}$.}
\end{remark}

\noindent The proposition below gives a motivation for the previous definitions.

\begin{proposition}\label{lifts} Let $\gamma(t): [a,b] \ra \bK \setminus \{0\}$ be a path with finite obstruction set
$\gamma^{-1}(\R) = \{a \leq t_1 <\ldots < t_p \leq b\}. $ Then $\gamma$ is tame if and only if it is tame at each $t_s, s = 1,\ldots,p.$

If $\gamma$ is a loop, then it is a tame loop if and only if it is tame at each $t_s, s = 1,\ldots,p.$
\end{proposition}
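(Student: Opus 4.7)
The plan is to reduce the existence and uniqueness of a companion to a pointwise condition at each obstruction parameter, obtained by examining the canonical form of $\gamma$ and invoking the covering $\bS \to \bS/\{\pm\Id\}$.

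First, I would observe that on $[a,b]\setminus T$, any companion $\mathfrak{I}^\gamma$ is rigidly determined. Indeed, writing $\gamma(t)=x(t)+Y(t)$ with $Y(t)=\Im\gamma(t)$, the condition $\gamma(t)\in \C_{\mathfrak{I}^\gamma(t)}$ together with $Y(t)\ne 0$ forces
\[
\mathfrak{I}^\gamma(t) \;=\; \bigl[\,Y(t)/|Y(t)|\,\bigr] \qquad \text{for every } t\in [a,b]\setminus T.
\]
At the obstruction parameters $t_s$ the value $\gamma(t_s)\in\R$ lies in every slice $\C_J$, so $\mathfrak{I}^\gamma(t_s)$ is a priori free, constrained only by continuity. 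Hence the existence, and likewise the uniqueness, of a companion is equivalent to asking that the canonical map $J(t):=[Y(t)/|Y(t)|]\in\bS/\{\pm\Id\}$ extends continuously, and uniquely, to each $t_s$; this, in turn, holds precisely when both one-sided limits $\lim_{t\to t_s^\pm}J(t)$ exist in $\bS/\{\pm\Id\}$ and coincide (the common value being forced to be $\mathfrak{I}^\gamma(t_s)$).

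The key technical step is then to translate convergence in the quotient into convergence in $\bS$, thereby matching Definition \ref{TameDefinition}. I claim that the one-sided limits of $J$ in $\bS/\{\pm\Id\}$ exist and agree if and only if the corresponding one-sided limits of the lift $\widetilde J(t):=Y(t)/|Y(t)|\in\bS$ exist in $\bS$ and are either equal or opposite. The implication from $\bS$ to the quotient is immediate by continuity of $[\,\cdot\,]$. For the converse, suppose $[\widetilde J(t)]\to [I]$ as $t\to t_s^-$. Choose disjoint open neighborhoods $U^+\ni I$ and $U^-\ni -I$ in $\bS$, each mapped homeomorphically by $[\,\cdot\,]$ onto a common neighborhood of $[I]$. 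For $t$ in a sufficiently small left-interval of $t_s$, $\widetilde J(t)\in U^+\cup U^-$; the connectedness of this interval and continuity of $\widetilde J$ force $\widetilde J$ to remain in one sheet, so $\widetilde J(t)$ converges either to $I$ or to $-I$. The same argument applies from the right, yielding the desired alternative of equality or oppositeness. This is exactly the tameness condition at $t_s$.

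Combining, $\gamma$ admits a unique companion if and only if the above extension exists uniquely at each $t_s\in T$, if and only if $\gamma$ is tame at each $t_s$. The main difficulty to execute cleanly is the covering argument just described; everything else is unwinding definitions.

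For the loop case, the argument is identical except that one must also secure closure of the companion, $\mathfrak{I}^\gamma(a)=\mathfrak{I}^\gamma(b)$. If neither $a$ nor $b$ lies in $T$ this is automatic from the formula $\mathfrak{I}^\gamma(t)=[Y(t)/|Y(t)|]$ together with $\gamma(a)=\gamma(b)$. If $a,b\in T$ (necessarily with $\gamma(a)=\gamma(b)\in\R$), closure of $\mathfrak{I}^\gamma$ amounts to the right limit of $\widetilde J$ at $a$ and its left limit at $b$ existing in $\bS$ and agreeing up to sign, which is precisely the adapted tameness condition at $a\simeq b$ in Definition \ref{TameDefinition}. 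Uniqueness then follows exactly as before, completing the loop statement.
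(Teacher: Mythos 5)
Your proposal is correct and follows essentially the same route as the paper: the paper's one-line proof simply invokes the remark that tameness at the $t_s$ is equivalent to the continuous extendability of the projectivized imaginary unit function. You are filling in exactly the details that the paper's remark leaves implicit, in particular the covering-space/connectedness argument relating convergence in $\bS/\{\pm\Id\}$ to convergence up to sign in $\bS$, and you handle the loop case the same way the paper intends via the adapted tameness condition at $a\simeq b$.
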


%\noindent Notice that the curve in the Example $(f)$ has one flip and one bounce.

\begin{proof}
By assumption, the projectivized imaginary unit function $[{\mathcal J}(\gamma(t))]$ defined on
$[a,b] \setminus \{t_1,\ldots, t_p\}$ has a continuous extension to $[a,b].$
\end{proof}

If $\gamma$ is a tame loop, the lift to $\mathcal E_{\bK}^+$ exists by Proposition \ref{TameLift1}.
However, we want to present also a constructive proof, because we will
use the same techniques to obtain lifts of non tame paths and to explain the definition of winding number through local data on the obstruction set.

Without loss of generality we assume that $a \ne t_1, b \ne t_p.$  Consider the intervals $I_0=[a=:t_0,t_1], I_1=
[t_1,t_2],\ldots,I_p=[t_p, t_{p+1}=:b]$  and denote the
restrictions of $\gamma$ on $I_s$ by $\gamma^s := \gamma|_{I_s}.$ The existence
of limits (\ref{limits}) provides, for any $s = 0,\ldots,p,$
continuous extensions of all functions
$\arg_k(\gamma^s)(t),\cI_k(\gamma^s)(t)$ to the endpoints of $I_s.$

Choose an arbitrary $k_0 \in \Z.$ Setting $\arg_{k_0} (\gamma^0(t))=:\arg^{\gamma}(t)$ and $\cI_{k_0}(\gamma^0(t))=:\cI^{\gamma}(t)$ we define continuous functions $\arg^{\gamma}$ and  $\cI^{\gamma}  $ on
       $[a,t_1].$  We set $\Arg^{\gamma}(t):=\arg^{\gamma}(t)\cI^{\gamma}(t)$ and define the lift
        $\Gamma^0 :=(\gamma^0, \Arg^{\gamma^0}).$
 %  Choose any $k_0$ and define $\Gamma^0$ as in the example $(c)$ and
   Consider the endpoint $\gamma(t_1).$ If it is a flip, then we
   define %proceed as in $(c)$ by defining
   \begin{equation*}
 k_1 :=
\left\{
    \begin{array}{ll}
        k_0 + 1, & \text{if }  \arg_{k_0}(\gamma^0(t_1)) = (k_0+1)\pi,\\
        k_0 -1, & \text{if }  \arg_{k_0}(\gamma^0(t_1)) = k_0\pi.
    \end{array}
\right.
\end{equation*}
%
%   \[k_1:= k_0 + 1, \mbox{ if }   \arg_{k_0}(\gamma^0(t_1)) = (k_0+1)\pi\] or
%   \[k_1:= k_0 - 1 \mbox{ if } \arg_{k_0}(\gamma^0(t_1)) = k_0\pi.\]
   If it is a bounce then we set
   $k_1:= k_0.$ By setting $\arg^{\gamma}(t):= \arg_{k_1}
   (\gamma^1(t))$ and $\cI^{\gamma}(t):=\cI_{k_1}(\gamma^1(t))$ we
   extend the functions $\arg^{\gamma}$ and $\cI^{\gamma} $
   continuously to $[a,t_2].$ We extend the above functions to $[a,b]$ by repeating this process.

\begin{proposition}\label{flips}
  A tame loop $\gamma$ with $\gamma^{-1}(\R) = \{a \leq t_1 <\ldots < t_p \leq b\}$ has  an even number of flips if and only if $\gamma$ is untwisted.
\end{proposition}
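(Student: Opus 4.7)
The plan is to track, via the explicit branch construction carried out in the paragraphs immediately preceding the statement, how the continuous companion lift $\cI^\gamma:[a,b]\to\bS$ of $\mathfrak{I}^\gamma$ evolves across the obstruction set $T=\{t_1,\ldots,t_p\}$, and then read off its closing behavior as a sign parity.

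First I would reduce untwistedness to a boundary identity. After reparametrising so that the base point $\gamma(a)=\gamma(b)$ lies off the real axis (admissible since $T$ is finite and $\gamma$ is not real-valued), Propositions \ref{untwisted loop} and \ref{conjugate endpoints} yield the dichotomy
\[
\gamma \text{ is untwisted} \iff \cI^\gamma(a)=\cI^\gamma(b),\qquad \gamma \text{ is twisted} \iff \cI^\gamma(a)=-\cI^\gamma(b).
\]
So the proposition will follow from the single identity $\cI^\gamma(b)=(-1)^{\#\text{flips}}\,\cI^\gamma(a)$.

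Next I would read off the construction. By the recipe given above, on each subinterval $I_s=[t_s,t_{s+1}]$ (with $t_0:=a$, $t_{p+1}:=b$) the companion lift has the form $\cI^\gamma|_{I_s}=\cI_{k_s}\circ\gamma|_{I_s}$ for integers $k_s$, with transition rule $k_s=k_{s-1}$ at a bounce and $k_s=k_{s-1}\pm 1$ at a flip. Combined with the key algebraic identity $\cI_{k+1}=-\cI_k$ from Definition \ref{arg_2l arg_2l+1}, iteration gives, for every $q\in\bK\setminus\R$,
\[
\cI_{k_p}(q)=(-1)^{\,k_p-k_0}\,\cI_{k_0}(q),
\]
and evaluating at $q=\gamma(a)=\gamma(b)$ produces $\cI^\gamma(b)=(-1)^{\,k_p-k_0}\,\cI^\gamma(a)$.

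Finally I would count parity: the telescoping sum $k_p-k_0=\sum_{s=1}^{p}(k_s-k_{s-1})$ has $0$-contributions at bounces and $\pm 1$-contributions at flips, so $k_p-k_0$ and $\#\{\text{flips}\}$ share the same parity, giving $(-1)^{\,k_p-k_0}=(-1)^{\#\text{flips}}$. Combining with the first step finishes the equivalence. The only delicate point is the base-point reduction in the first step: moving the base point off $\R$ may turn an endpoint obstruction (possibly a flip or bounce under the extended loop definition) into an interior one, but both the total flip count modulo $2$ and the untwisted/twisted dichotomy are reparametrisation invariant, so the reduction is harmless. Everything else is a direct unwinding of the explicit construction, so I do not expect a serious obstacle beyond being careful about this base-point normalisation.
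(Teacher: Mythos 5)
Your argument is correct and rests on the same core idea as the paper's proof: the branch index $k_s$ of the companion lift $\cI^{\gamma}|_{I_s}=\cI_{k_s}\circ\gamma$ changes parity exactly at the flips, and $\gamma$ is untwisted precisely when $\cI^{\gamma}$ closes up. The only real difference is your choice to normalise the base point \emph{off} $\R$, which, via the identity $\cI_k(q)=(-1)^k\cI(q)$ from Definition~\ref{arg_2l arg_2l+1}, yields the clean closing formula $\cI^{\gamma}(b)=(-1)^{k_p-k_0}\cI^{\gamma}(a)$ and thereby sidesteps the paper's case split on whether the (real) base point is a bounce or a flip; your remark that the flip count mod $2$ and the (un)twisted property are both intrinsic to the loop correctly legitimises this reduction.
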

\begin{proof}
Assume that the loop does not have flips. Then $\cI^{\gamma}$ equals
%the initially chosen imaginary unit function
$\cI_{k_0}\circ\gamma$ for some $k \in \Z$
and so it is obviously a loop.

For the case of a loop with flips, we assume, without loss of generality, that $k_0 = 0,$
so we have started with the principal branch and moreover, we also assume that the
parameterization $\gamma: [a,b] \ra \H$ is such that $\gamma(a) \in
\R.$

As in the previous proof, all the functions
$\arg_0(\gamma^s)(t),\cI_0(\gamma^s)(t)$ have continuous extensions to
the endpoints of $I_s.$

If $a$ is a bounce, then the imaginary unit at $\gamma(b)$ is
the same as the one at $\gamma(a)$, i.e. $\cI_0(\gamma^0)(a)= \cI_0(\gamma^p)(b).$ The even
number of flips ensures that the sign of the imaginary unit at the
endpoint remains the same with respect to the one at the principal
branch.

If the initial point is a flip, then the imaginary unit function at
endpoint has the opposite sign with respect to the one at the initial
point, $\cI_0(\gamma^0)(a)= -\cI_0(\gamma^p)(b),$ and to end up with
the same sign there must be an odd number of additional flips
following the first one to ensure that the sign of the unit at the
endpoint remains the same with respect to the one at the principal
branch.
\end{proof}

The proofs of Propositions \ref{lifts} and \ref{flips} show that once
the lift near the initial point is chosen, only the flips are relevant
for the determination of the lift near the endpoint; bounces can be
discarded. This enables us to calculate the change of argument and the
winding number out of local data at the intersections of $\gamma$ with
the real axis. To determine the change of the argument we introduce a
notion of {\em signature}.

\begin{definition}
Let $\gamma:[a,b]  \ra \bK \setminus \{0\}$ be a given  path with $\gamma^{-1}(\R) = \{a \leq t_1 < \ldots < t_p \leq b\}$,  with  points of $\gamma^{-1}(\R) \cap (a,b)$ all  tame.  Let $a < \xi_1 <\ldots <\xi_m < b$  be those parameters
in $\gamma^{-1}(\R)$ which are flips. The signature $\sigma(\gamma)$ is defined by
$$
  \sigma(\gamma):= \sum_{l = 1}^m\sign(\gamma(\xi_l))(-1)^l.
$$
If there are no flips, then we define $\sigma(\gamma):=0.$
\end{definition}
\noindent
The connection between the signature and the change of argument is described  in the following

\begin{proposition}\label{signature}
Let $\gamma:[a,b] \ra \bK \setminus \{0\}$ be a tame  path  with
$\gamma^{-1}(\R)=\{a_1\leq t_1 < \ldots < t_p \leq b\}$ with all the parameters  $\gamma^{-1}(\R) \cap (a,b)$ tame. Assume that a lift $\Gamma$
of $\gamma|_{[a,t_1]}$ in $\mathcal{E}_\bK^+$
exists
and equals $\Gamma(t) = (\gamma(t), \Arg_{k_0}^{\gamma}(t)) \in \mathcal{E}^+_\bK, t \in [a,t_1]$
 for some $k_0 \in \Z$.
Then the lift of $\gamma|_{[t_p,b)}$ is given by $(\gamma(t),\Arg^{\gamma}_{k_0 + (-1)^{k_0}\sigma(\gamma)}(t)), t \in [t_p,b).$
\end{proposition}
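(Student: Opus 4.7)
The plan is to track the branch index carried by the lift along the successive pieces of $[a,b]$ cut by the obstruction set, exactly following the constructive procedure spelled out just before the statement, and to collect the net shift as a sum of contributions from the flips.

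First I would note that bounces leave the branch index unchanged (the construction sets $k_s = k_{s-1}$ at a bounce), so all the arithmetic happens at the flips $\xi_1 < \cdots < \xi_m$. Denote by $k_0, k_1, \dots, k_m$ the indices carried on the constant-index intervals between consecutive flips. By the rule recalled in the excerpt, $k_l - k_{l-1} = +1$ when $\arg_{k_{l-1}}(\gamma(\xi_l)) = (k_{l-1}+1)\pi$ and $k_l - k_{l-1} = -1$ when $\arg_{k_{l-1}}(\gamma(\xi_l)) = k_{l-1}\pi$.

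The crux is to link each of these two alternatives with $\sign(\gamma(\xi_l))$. From Definition \ref{arg_2l arg_2l+1} one reads off that the boundary value $(k+1)\pi$ of $\arg_k$ is attained on $\R^-$ when $k$ is even and on $\R^+$ when $k$ is odd, while the boundary value $k\pi$ is attained on $\R^+$ when $k$ is even and on $\R^-$ when $k$ is odd. A short four-case check then packages the branching rule into the single identity
\[
\Delta k_l := k_l - k_{l-1} = -(-1)^{k_{l-1}}\sign(\gamma(\xi_l)).
\]
This is where all of the hypercomplex bookkeeping is concentrated and, in my view, the only delicate step of the proof; the remainder is formal manipulation.

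With that identity in hand, I would finish by using the fact that every flip toggles the parity of the branch index while bounces preserve it, so $k_{l-1} \equiv k_0 + (l-1) \pmod 2$ and consequently $(-1)^{k_{l-1}} = -(-1)^{k_0+l}$. Telescoping,
\[
k_m - k_0 = \sum_{l=1}^m \Delta k_l = (-1)^{k_0}\sum_{l=1}^m (-1)^l \sign(\gamma(\xi_l)) = (-1)^{k_0}\,\sigma(\gamma),
\]
which is exactly the asserted branch index $k_0 + (-1)^{k_0}\sigma(\gamma)$ on $[t_p,b)$. The base case $m=0$ (no flips), for which $\sigma(\gamma)=0$ and the index is unchanged, is immediate from the construction.
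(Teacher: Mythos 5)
Your proof is correct, and it takes a genuinely cleaner route than the paper's argument for the same statement. The paper proceeds by a base-case-plus-perturbation argument: it first normalizes to $k_0=0$, verifies the formula directly in the two ``alternating'' cases $\sign(\gamma(\xi_l))=(-1)^l$ and $\sign(\gamma(\xi_l))=(-1)^{l-1}$ (where the argument is monotone), and then argues case-by-case what happens when a single entry is inserted into an alternating sign sequence, treating $k_0$ odd afterwards as ``the conjugate shadow'' with a reversed orientation. In contrast, you isolate the exact local increment at each flip, $\Delta k_l = -(-1)^{k_{l-1}}\sign(\gamma(\xi_l))$, by reading off from Definition \ref{arg_2l arg_2l+1} on which real half-axis each boundary value $k\pi$, $(k+1)\pi$ of $\arg_k$ is attained (the four-case check is correct: $(k+1)\pi$ lands on $\R^-$ for even $k$ and $\R^+$ for odd $k$, and $k\pi$ the other way around). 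Combined with the parity bookkeeping $k_{l-1}\equiv k_0+(l-1)\pmod 2$, which is forced because each flip changes $k$ by $\pm 1$ and bounces leave it fixed, this turns the sum into a clean telescope, and the even/odd $k_0$ cases are absorbed into the single prefactor $(-1)^{k_0}$. Your version is more systematic and avoids the informal ``insertion'' step of the paper, which does not make fully explicit why every admissible sign sequence is reachable by such insertions; the telescoping identity handles an arbitrary sequence of signs in one stroke. What you lose, relative to the paper, is only the explicit geometric narrative (increasing/decreasing argument, translation of the initial interval for $\Arg$), which the paper uses to connect the signature to the shadow's winding; but that is commentary rather than logical content.
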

\begin{remark}{\em  If $\gamma(b) \in \R$, then a lift of $\gamma$ on $[a,b)$  can be extended continuously to $b$ if and only if $b$ is tame from the left.}
\end{remark}
\begin{corollary} Let $\gamma$ be as in  Proposition \ref{signature} and let $\gamma(a) = \gamma(b) \not\in \R.$ Then $\sigma(\gamma)$ is even if and only if $\gamma$ is tame and untwisted.
 If this is the case, then
\[\omega(\gamma) = |\sigma(\gamma)|/2.\]
\end{corollary}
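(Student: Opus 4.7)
The plan is to read off both claims directly from Proposition \ref{signature} once the relation between the branch index and the sign of $\cI^\gamma$ is made explicit. First I would apply Proposition \ref{signature} to $\gamma$: since $\gamma(a)=\gamma(b)\notin\R$, the obstruction set $\gamma^{-1}(\R)=\{t_1<\dots<t_p\}$ is contained in $(a,b)$, $\gamma$ is tame at the endpoints trivially, and the lift $\Gamma$ produced by the proposition extends continuously to all of $[a,b]$. If $\Gamma|_{[a,t_1]}=(\gamma,\Arg^\gamma_{k_0})$, then $\Gamma|_{[t_p,b]}=(\gamma,\Arg^\gamma_{k_1})$ with
\[
k_1=k_0+(-1)^{k_0}\sigma(\gamma).
\]

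Second, I would translate ``untwisted'' into a statement about parities. By Definition \ref{arg_2l arg_2l+1}, $\cI_k(q)=(-1)^k\cI(q)$, so using $\cI(\gamma(a))=\cI(\gamma(b))$ (same quaternion) we have $\cI^\gamma(a)=(-1)^{k_0}\cI(\gamma(a))$ and $\cI^\gamma(b)=(-1)^{k_1}\cI(\gamma(a))$. Since $\bS\cong S^2$ or $S^6$ is simply connected, the companion $\mathfrak{I}^\gamma=[\cI^\gamma]$ is null-homotopic in $\bS/\{\pm\Id\}$ if and only if its lift $\cI^\gamma$ is a loop, i.e.\ $\cI^\gamma(a)=\cI^\gamma(b)$. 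This equality holds precisely when $(-1)^{k_1-k_0}=(-1)^{(-1)^{k_0}\sigma(\gamma)}=1$, equivalently when $\sigma(\gamma)$ is even. Since $\gamma$ is already tame by the hypothesis of Proposition \ref{signature}, this establishes the equivalence ``$\sigma(\gamma)$ even $\iff$ $\gamma$ tame and untwisted''.

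Third, assuming $\sigma(\gamma)$ even, I would compute the winding number from the shadow. From $\exp(\cI^\gamma(t)\arg^\gamma(t))=\cos\arg^\gamma(t)+\cI^\gamma(t)\sin\arg^\gamma(t)$ and the canonical form $\gamma(t)=x(t)+\cI^\gamma(t)y(t)$, one checks that the shadow equals
\[
\gamma_{\cI^\gamma}(t)=|\gamma(t)|\,e^{i\arg^\gamma(t)},
\]
so $\mathrm{wind}(\gamma_{\cI^\gamma})=(\arg^\gamma(b)-\arg^\gamma(a))/(2\pi)$. Splitting into the cases $k_0=2l$ and $k_0=2l+1$ and using the explicit formulas $\arg_{2l}(q)=\arg_\cI(q)+2l\pi$ and $\arg_{2l+1}(q)=2\pi-\arg_\cI(q)+2l\pi$ together with $\gamma(a)=\gamma(b)$ (so $\arg_\cI(\gamma(a))=\arg_\cI(\gamma(b))$), a short check shows $\arg^\gamma(b)-\arg^\gamma(a)=\pm\sigma(\gamma)\pi$, with the sign matching $(-1)^{k_0}$. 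Taking absolute values gives $\omega(\gamma)=|\sigma(\gamma)|/2$ by Definition \ref{windingnew}.

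The only delicate point is the third step: the sign of the argument increment depends on the parity of $k_0$ because the odd branches $\arg_{2l+1}$ are orientation-reversing with respect to $\arg_\cI$. I would therefore carry out both parity cases side by side and verify that the two opposite signs collapse to the same $|\sigma(\gamma)|/2$ after the absolute value in Definition \ref{windingnew}, which is precisely why the winding number is a well-defined geometric invariant even though individual branches depend on the initial choice $k_0$.
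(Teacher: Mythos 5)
The paper states this corollary without giving an explicit proof, so the comparison is against the implicit intent. Your argument is correct and fits the paper's framework naturally: you pull the relation $k_1=k_0+(-1)^{k_0}\sigma(\gamma)$ out of Proposition \ref{signature}, read off that $\cI^\gamma(a)=\cI^\gamma(b)$ holds exactly when $k_1-k_0$ is even (hence exactly when $\sigma(\gamma)$ is even), identify that condition with ``untwisted'' via the covering-space lifting criterion for $\bS\to\bS/\{\pm\Id\}$ with $\bS$ simply connected, and then compute the shadow's argument increment $\arg^\gamma(b)-\arg^\gamma(a)=(-1)^{k_0}\sigma(\gamma)\pi$ in the two parity cases to land on $\omega(\gamma)=|\sigma(\gamma)|/2$.

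Two small remarks on how your route relates to what is already in the text. First, the parity equivalence could be obtained more directly from Proposition \ref{flips} (even number of flips iff untwisted) together with the observation that $\sigma(\gamma)$ is a sum of $m$ terms each equal to $\pm1$, where $m$ is the number of flips, so $\sigma(\gamma)\equiv m\pmod 2$; your derivation through the branch-index formula is equivalent but rederives Proposition \ref{flips}'s content rather than quoting it. Second, the step ``$\cI^\gamma$ closed iff untwisted'' is exactly the content of Proposition \ref{untwisted loop} restricted to the lift of the companion, which you could cite instead of invoking the lifting criterion anew. Neither affects correctness, and your explicit case split on the parity of $k_0$ at the end is exactly the bookkeeping that makes the absolute value in Definition \ref{windingnew} collapse the two branch conventions to the same answer; that part is well done.
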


\noindent{\em Proof of Proposition \ref{signature}. } Assume that $k_0 = 0,$ so $\arg(\gamma(a)) \in (0,\pi)$ and let the sequence of signs of flips be alternating starting with $-1,$ i.e. $-1,1,-1,\ldots .$
Then $\arg^{\gamma}(\gamma(t))$ increases when the path $\gamma$ crosses the real axis,
so $k$ increases by $1$ at each flip (because the sign of the flip changes); altogether, this occurs
$\sum_{l=1}^m (-1)^l(-1)^l = \sum_{l = 1}^m\sign(\gamma(\xi_l))(-1)^l = \sigma(\gamma)$ times.  This coincides with winding around
the origin of the shadow in the positive
direction. If the sequence of signs starts with $1,$ then
$\arg^{\gamma}(\gamma(t))$ decreases when the path $\gamma$ crosses the positive real axis and
this results in the translation of the interval $[0,\pi]$ by $\pi \sum_{l=1}^m (-1)^{l-1}(-1)^l=\pi \sigma(\gamma).$

To prove the general assertion it suffices to show what happens if the sequence is not alternating at one position.

Assume that we insert in the alternating sequence $-1,1-1\ldots$ the
integer $1$ in the second position, so the sequence is no longer
alternating: $-1,1,1,-1,\ldots.$ This means that we have started from
the upper half-plane, crossed the negative real axis, then the
positive real axis with the arguments in $[2\pi, 3\pi].$ Then we have
crossed the positive real axis again, hence the choice of argument at
this intersection must be $2\pi.$ Because the point is a flip, this
means that the argument decreases and keeps decreasing till the end.
This is faithfully reflected in the sequence $s_l =(-1)^l
\sign(\gamma(\xi_l)),$ because it equals $1,1,-1,-1,...$ and so the sum $\sum_{l = 1}^m\sign(\gamma(\xi_l))(-1)^l = \sigma(\gamma)$
multiplied by $\pi$ corresponds with the total translation of the
initial interval for the $\Arg$.

Similarly, if we insert $-1$ on the second position, this means that
we have crossed the negative real axis and we have the argument in
$[\pi, 2\pi]$ but then we have returned to the negative real axis and
in order to have the argument continuous, at the second crossing the
argument $\pi$ must be chosen and because we have a flip, the argument
$\arg^{\gamma}(\gamma(t))$ decreases and keeps decreasing till the
end.  The corresponding sequence $s_1,s_2,\ldots$ now equals
$1,-1,-1,-1,\ldots$ and $\sum_{l = 1}^m s_l = \sigma(\gamma).$

The proof for $k_0$ even is the same.  If $k_0$ is odd, this coincides with considering the conjugate shadow and hence  reversed
orientation compared, to $k_0$ even,  so the signature has to be multiplied by $-\pi$ to get the total translation of the initial interval for the $\Arg$.
\hfill$\Box$\\

In practice this means that once the sequence of $\pm 1$-s is given,
we start by cancelling the pairs of the same numbers until we end
up with an alternating sequence. The number of elements multiplied by
minus the first element is the signature.

If the path $\gamma : [a,b] \ra \bK \setminus \{0\}$ is closed,
i.e. $\gamma(a)=\gamma(b)$, then we identify points $a$ and $b$ of $[a,b]$ and
consider the parameterization as $\gamma : S^1 \ra \bK \setminus
\{0\},$ so there is no distinguished initial point. Therefore, in this
case we require that for each $s \in S^1$ there exists a neighbourhood
of $U_s$ of $s$ in $S^1$ such that the lift of $\gamma$ exists on
$U_s.$

\begin{definition}\label{LiftsClosedCurves}
  Let $\gamma : S^1 \ra \bK \setminus \{0\}$ be a continuous loop. Then a continuous function
  $\Gamma: i\R \ra \mathcal{E}^+$ is {\em a lift of $\gamma$} if
  the following diagram commutes:
$$\xymatrix
   { i\R \ar[d]^{\exp}  \ar[rr]^{\Gamma} & & \Gamma(i\R) \ar[d]^{\pro\mathrm{}} \subset \mathcal{E}_\bK^+   \\
     S^1   \ar[rr]^{\gamma}  & & \gamma(S^1) \subset \bK \setminus\{0\}
   }
$$
\end{definition}

\begin{remark} {\em A loop with companion always has a (not necessarily closed) lift in the sense of Definition \ref{LiftsClosedCurves}.  The loop presented in  Figure \ref{meridians} does not have a lift in the sense of Definition \ref{LiftsClosedCurves}.
The curve is defined by $\gamma(t) = \cos(t) + i (\sin(t) - t/10)$ for
$t \in [0,\pi]$ and $\gamma(t) = \cos(2\pi-t) -i(2\pi-t)/10 + j
\sin(2\pi-t) $ for $t \in [\pi, 2\pi].$ However, starting from any
point with $\gamma(t) \in \R \times i\R^+$ and using the principal
branch one can obtain the local lift of $\gamma$ and prolong it to the
interval $[0,2\pi].$ }
\end{remark}

\begin{figure}[h!]
\centering
 \includegraphics[scale=0.6]{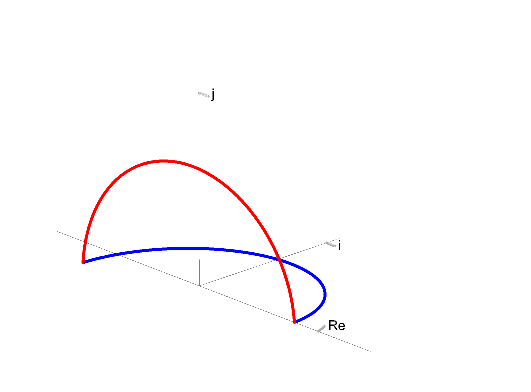}
 \begin{center}\caption{\footnotesize A loop without a lift in the sense of Definition \ref{LiftsClosedCurves}.  }
 \label{meridians}
 \end{center}
\end{figure}

\begin{corollary}\label{ClosedNonTame}
Let $\gamma:[a,b] \ra \bK \setminus \{0\}$ be a continuous loop with
$\gamma^{-1}(\R)$ nonempty and  assume it is  not tame at least  at one of the obstruction parameters.
Let $a = \xi_1 < \ldots <\xi_m = b \simeq a$ are all the
obstruction parameters where $\gamma$ is not tame and assume, moreover, that  $\gamma(\xi_k)>0.$  Then a lift of $\gamma$ in $\mathcal{E}^+_{\bK}$ exists if and only if
$\sigma(\gamma|_{[\xi_l,\xi_{l+1}]}) \in \{0,-1\}$ for each $l =
1,\ldots, m-1.$
If it exists, the lift is a loop.
\end{corollary}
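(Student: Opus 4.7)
The plan is to combine a local analysis of the fibre of $\pro$ over positive real numbers with the bookkeeping carried out in Proposition \ref{signature} on each subinterval $[\xi_l, \xi_{l+1}]$.

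First, since $E: \bK \to \mathscr E_\bK^+$ is a diffeomorphism, solving $E(x+Iy)=(r,v)$ with $r>0$ forces $\sin y = 0$ and $\cos y = 1$, hence $y = 2j\pi$ and $x = \log r$, yielding
\[
\pro^{-1}(r) \;=\; \{(r, 0)\} \,\cup\, \bigcup_{j \in \Z \setminus \{0\}} \{\, (r, 2j\pi I) : I \in \bS\,\}.
\]
The distinguished point $(r, 0)$ is the only fibre point that does not depend on a choice of imaginary unit. I would then argue that at each non-tame $\xi_l$ with $\gamma(\xi_l) = r > 0$, any continuous lift $\Gamma = (\gamma, \Arg^\gamma)$ must satisfy $\lim_{t \to \xi_l}\Arg^\gamma(t) = 0$: a limit value $2j\pi I_0$ with $j \neq 0$ would force $\cI(\gamma(t)) \to I_0$ from both sides of $\xi_l$, contradicting the non-tameness hypothesis on the one-sided limits of $Y(t)/|Y(t)|$.

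Next, on each $[\xi_l, \xi_{l+1}]$ all interior obstruction parameters are tame by hypothesis, so Proposition \ref{signature} applies. Normalising the initial branch of the lift so that $\arg^\gamma(t) \to 0^+$ as $t \to \xi_l^+$ (i.e.\ $k_0 = 0$), that proposition puts the lift on branch $\sigma_l := \sigma(\gamma|_{[\xi_l, \xi_{l+1}]})$ on a left neighbourhood of $\xi_{l+1}$. Inspecting Definition \ref{arg_2l arg_2l+1}, the limit of $\arg_{\sigma_l}$ at the positive real point $\gamma(\xi_{l+1})$ equals $0$ exactly when $\sigma_l = 0$ (branch with argument in $(0,\pi)$, limit $0^+$) or $\sigma_l = -1$ (branch with argument in $(-\pi, 0)$, limit $0^-$). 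Combined with the previous step, this yields the stated equivalence.

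When $\sigma_l \in \{0, -1\}$ holds for every $l = 1, \ldots, m-1$, the local lifts on the $(\xi_l, \xi_{l+1})$ extend continuously to each $\xi_l$ with common value $(\gamma(\xi_l), 0)$ and glue into a global continuous lift $\Gamma : [a,b] \to \mathscr E_\bK^+$. Since $\gamma$ is a loop with $\gamma(a)=\gamma(b)=\gamma(\xi_1)=\gamma(\xi_m)>0$ and $\Gamma(a) = (\gamma(a), 0) = \Gamma(b)$, the lift $\Gamma$ is itself a loop. The main obstacle I expect is the first step: rigorously justifying that the local topology of $\mathscr E_\bK^+$ near $(r, 0)$ forces any continuous lift through this distinguished fibre point under non-tameness, as opposed to some other fibre point $(r, 2j\pi I_0)$. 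Once that local fact is secured, Proposition \ref{signature} reduces the remainder to a finite branch-matching check.
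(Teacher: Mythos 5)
Your proposal is correct and follows essentially the same route as the paper's own argument: force the lift through the distinguished fibre point over a positive real at each non-tame parameter, then apply Proposition~\ref{signature} on each subinterval $[\xi_l,\xi_{l+1}]$ to see that the branch index $k$ must land back in $\{0,-1\}$, which is precisely $\sigma(\gamma|_{[\xi_l,\xi_{l+1}]})\in\{0,-1\}$. The paper simply \emph{asserts} that only $k_0=0$ or $k_0=-1$ are admissible at a non-tame positive real crossing and then runs the same bookkeeping (explicitly treating both $k_0=0$ and $k_0=-1$ and checking that the constraint on $\sigma$ is unchanged); you supply the missing justification for that assertion by computing $\pro^{-1}(r)$ and observing that a limit value $2j\pi I_0$ with $j\neq 0$ would force one-sided limits of $\cI(\gamma(t))$ to exist, contradicting non-tameness. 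Two small points worth noting: (i) you reduce to $k_0=0$ ``by normalisation'' where the paper carries the $k_0=-1$ case through explicitly; this is harmless because the resulting condition on $\sigma_l$ is the same for both choices, but it deserves the one-line check you omitted; and (ii) your final glueing step and the observation that $\Gamma(a)=(\gamma(a),0)=\Gamma(b)$ give the closedness of the lift, which the paper's proof leaves implicit. Overall this is a faithful, somewhat more detailed rendering of the paper's argument.
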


\begin{proof} Because $\gamma$ is not tame at $\xi_l$ we can only choose either $k_0 = 0$  or $k_0 = -1$
  and lift the curve in a neighborhood of the point $\gamma(\xi_l)$ to $\mathcal{E}^+_{\bK}$ using the principal branch of the logarithm.
  Assume that we have  chosen $k_0=0$. Then on $[\xi_{l+1} -\delta,
    \xi_{l+1})$ for some small $\delta>0$ we may only have $k = 0,-1,$
    hence the signature can be either $0$ or $-1$ in order to be able
    to extend the lift to $\xi_{l+1}.$ If we have $k_0 = -1,$ then,
    since we have to end up with $k= 0,-1$ near $\xi_{i+1},$ the
    condition is $(-1)^{k_0}\sigma(\gamma|_{[\xi_l,\xi_{l+1}]}) \in
    \{0,1\}$ hence $\sigma(\gamma|_{[\xi_l,\xi_{l+1}]}) \in \{0,-1\}.$
\end{proof}

When we do not have additional information about the
set $\gamma^{-1} (\mathbb{R})$,
%% the intersections as in the case of finite set of points or intervals,
we must assume that the continuous lift of $\gamma$
exists on a neighbourhood $U$ of $\gamma^{-1}(\R) \cap
(-\infty,0)$. This means that the path $\gamma|_U$  has a companion, since
% the argument is nonzero.
the restriction of the function $\arg^{\gamma}$  to $U$ is not vanishing.
Recall that, on a neighbourhood of
$\gamma^{-1}(\R) \cap (0,\infty)$, the principal branch of the logarithm
is well-defined
and hence a lift of $\gamma$ always exists. This does not imply that a global lift exists.
%%{\color{blue} (should be marginpar but too long) We may have one 'rocket' at $1$ (hence the principal branch) continue to $-1$ where we have a flip so the branch is then with arguments in $[\pi, 2\pi]$  and then we have another rocket at $2,$ so again the principal branch should be required, but our arguments approach $2\pi,$ so we cannot extend $\Arg$ continuously to the last point. This case cannot be made liftable with some reparametrizations. Maybe this example should be elsewhere or skipped.}\\

We now proceed with the detailed description of the possible situations
when $\gamma$  has a companion  on a neighborhood of real points and omit the (trivial) case $\gamma([a,b]) \subset \R$.
\begin{proposition} Let $\gamma:[a,b]\rightarrow \bK \setminus \{0\}$ be a  path.  Then $\gamma$ has a companion if and only if it has a companion on a neighbourhood of the obstruction set. The same holds for a loop $\gamma$ with $\gamma(a) \not \in \R$.
\end{proposition}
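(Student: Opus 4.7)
The forward implication is immediate: if $\mathfrak{I}^{\gamma}:[a,b]\to \bS/\{\pm\Id\}$ is a global companion of $\gamma$, then its restriction to any open neighbourhood $U$ of the obstruction set $T=\gamma^{-1}(\R)$ is a companion on $U$. So I would focus on the nontrivial direction: reconstructing a global companion from a local one.

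Suppose $U\subset [a,b]$ is an open neighbourhood of $T$ carrying a companion $\mathfrak{I}_U:U\to \bS/\{\pm\Id\}$ of $\gamma|_U$. On the complementary open set $V:=[a,b]\setminus T$, the path $\gamma$ avoids $\R$, so the imaginary unit function $\cI\circ\gamma:V\to\bS$ is well-defined and continuous, and hence $\mathfrak{I}_V(t):=[\cI(\gamma(t))]$ is a continuous map $V\to\bS/\{\pm\Id\}$ which is automatically a companion of $\gamma|_V$. I would now paste $\mathfrak{I}_U$ and $\mathfrak{I}_V$ into a continuous map on $[a,b]=U\cup V$. The key observation making the gluing work is uniqueness: for any $t\in U\cap V$, $\gamma(t)\notin\R$ lies in one and only one complex slice, namely $\C_{\cI(\gamma(t))}$, so the companion relation $\gamma(t)\in\C_{\mathfrak{I}_U(t)}$ forces $\mathfrak{I}_U(t)=[\cI(\gamma(t))]=\mathfrak{I}_V(t)$. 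The two partial companions therefore agree on the overlap, and the pasting lemma yields a continuous companion $\mathfrak{I}^{\gamma}:[a,b]\to \bS/\{\pm\Id\}$ of $\gamma$.

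For the loop case with $\gamma(a)\notin\R$, I apply the same construction. The point $a\simeq b$ lies in $V$, so near both endpoints the glued companion equals $[\cI\circ\gamma]$. Since $\gamma(a)=\gamma(b)\notin\R$, this gives $\mathfrak{I}^{\gamma}(a)=[\cI(\gamma(a))]=[\cI(\gamma(b))]=\mathfrak{I}^{\gamma}(b)$, so the companion is automatically closed.

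I do not expect a serious obstacle here: the only conceptual point is the rigidity of the companion off the real axis (unique slice through each non-real point), and everything else is a standard pasting argument. The hypothesis $\gamma(a)\notin\R$ in the loop case is precisely what removes the endpoint ambiguity and forces $\mathfrak{I}^{\gamma}(a)=\mathfrak{I}^{\gamma}(b)$; without it, the companion given on $U$ near a real endpoint could, a priori, fail to close up into a loop in $\bS/\{\pm\Id\}$.
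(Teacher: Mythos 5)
Your proof is correct, and it supplies an argument where the paper actually gives none (the proposition is stated without proof). The essential ingredients are all present and in the right place: $T=\gamma^{-1}(\R)$ is closed, so $V=[a,b]\setminus T$ is open and $\{U,V\}$ is an open cover; on $V$ the companion is rigidly forced to be $[\cI\circ\gamma]$ because a non-real point of $\bK$ lies in a unique slice; that same rigidity shows $\mathfrak{I}_U=\mathfrak{I}_V$ on the overlap $U\cap V$, so the pasting lemma applies; and for a loop with $\gamma(a)=\gamma(b)\notin\R$, both endpoints lie in $V$, so the glued companion automatically closes up, giving a loop with companion in the sense of the paper's definition. The only small point worth making explicit is that for the loop case the paper implicitly views $\gamma$ as defined on $S^1$ and the obstruction set and its neighbourhood as subsets of $S^1$; the hypothesis $\gamma(a)\notin\R$ guarantees $a\simeq b\notin T$, so a neighbourhood of $T$ in $S^1$ and one in $[a,b]$ coincide and your construction passes cleanly to the circle.
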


In the sequel we explain how to extend the notion of signature to paths with infinite obstruction set. Since $\gamma([a,b])$ is compact, there are only finitely many connected components
of $\gamma([a,b])\setminus \R$ with endpoints of opposite
sign.

\begin{definition} Let $L_1,\ldots, L_m$ be all the connected components of $\gamma
([a,b]) \setminus \R,$  $L_l(t) = \gamma(t), t \in (s_l,e_l) \subset
  [a,b]$ satisfying $\gamma(s_l)\gamma(e_l) < 0$ and $a\leq
  s_l<e_l\leq s_{l+1}< e_m\leq b$, $l=1,\ldots, m$.  We call the
  components {\em the big arcs} and the subdivision $a\leq s_l<e_l\leq
  s_{l+1}< e_m\leq b$, $l=1,\ldots, m$ {\em the induced
    subdivision}. The intervals $[e_l, s_{l+1}]$ are called {\em
    obstruction intervals}. If $\gamma$ is closed, then we identify
  $a$ and $b,$ $e_0:=e_m, s_{m+1}:=s_1$ and define also $[e_0,s_1]$ as
  the obstruction interval.
\end{definition}
Because  $\gamma([e_l,s_{l+1}])$ misses either the positive or the
negative real axis, we define the sign of the obstruction interval as follows.
\begin{definition}
  If $\gamma([e_l,s_{l+1}]) \cap (-\infty, 0) = \varnothing,$
  then $\sign([e_l,s_{l+1}]) = 1$;
otherwise, if $\gamma([e_l,s_{l+1}]) \cap (0,\infty) = \varnothing,$ then
$\sign([e_l,s_{l+1}]) = -1.$
\end{definition}

Extend the domains of definition of each $L_l$ to its
 %$L_j-s$ to their
 endpoints and let
$$\mathcal
I^l(t):= \mathcal J(\gamma(t)), t \in (s_l,e_l) \mbox{ and }
I_l^s:= \lim\limits_{t
  \ra s_l^-}\mathcal J^l(t), I_l^e:= \lim\limits_{t \ra e_l^+}\mathcal
J^l(t)$$
be the imaginary units of $L_l$ at its endpoints, if the
limits exist.

\begin{definition}
Let $\gamma:[a,b]\rightarrow \bK \setminus \{0\}$ be a  path  with companion $\mathfrak I$ with lifts $\pm \mathcal I.$ Let
 $a \leq s_1 < e_1 \leq s_2 < e_2 <\ldots \leq s_m < e_m \leq b$ be the induced subdivision and $L_l$ the big arcs with limits
 $I_l^e$ and $I_l^s,l = 1,\ldots, m$.

The interval $[e_l,s_{l+1}], 1 \leq l \leq m-1,$ is {\em a bounce with respect to $\mathfrak I$ } if $\mathcal I$ (or $-\mathcal I$)
 satisfies $\mathcal I(e_l) = \pm I_j^e, \mathcal I(s_{l+1})= \pm I_{l+1}^s $ and a
 {\em a flip with respect to $\mathfrak I$} if $\mathcal I$ (or $-\mathcal I$)  satisfies $\mathcal I(e_l) = \pm I_l^e, \mathcal I(s_{l+1})= \mp I_{l+1}^s.$
\end{definition}

\begin{remark}{\em
    If $\gamma$ has a companion and $\gamma([e_l,s_{l+1}]) \cap \R$ contains an open set then $\gamma$ always  has a companion that makes it  a bounce and a companion that makes it a flip. If the interval $[e_l, s_{l+1}]$
    reduces to a point, then the definition of tameness for intervals
    coincides with the definition of
    tameness for points.}
\end{remark}
We can now extend the definition of signature also to this general case.
\begin{definition}\label{DefSignature}
  Let $\gamma:[a,b]\rightarrow \bK \setminus \{0\}$ be a  path with the induced  subdivision $a \leq s_1 < e_1 \leq s_2 < e_2 <\ldots \leq s_m < e_m \leq b$ and a companion $\mathfrak I.$
 Let $1 \leq j_1 < \ldots < j_k \leq m$  be the indices for which the intervals $[e_{j_i}, s_{j_i + 1}]$
 are flips. The signature $\sigma(\gamma,{\mathfrak I})$ with respect to the companion $\mathfrak I$ is defined as
$$
  \sigma(\gamma,{\mathfrak I}):= \sum_{l = 1, j_k \ne m}^k\sign([e_{j_l}, s_{j_l+1}])(-1)^l.
$$
  If $\gamma$ is a  loop, then  we define the %{\color{red} \em closed? other word?}
 circular
  signature with respect to ${\mathfrak I}$ to be
$$
  \sigma^c(\gamma, {\mathfrak I}):= \sum_{l = 1}^k\sign([e_l,s_{l+1}]))(-1)^l.
$$
If there are no flips, then we define $\sigma(\gamma,{\mathfrak I}):=0, \sigma^c(\gamma,{\mathfrak I}):=0.$
\end{definition}

The following are straightforward generalizations of Proposition \ref{signature} and Corollary \ref{ClosedNonTame}
\begin{proposition}\label{signature2}
 Let $\gamma:[a,b]\rightarrow \bK \setminus \{0\}$ is a  path with the companion ${\mathfrak I}$ and the induced  subdivision $a=e_0 \leq s_1 < e_1 \leq s_2 < e_2 <\ldots \leq s_m < e_m \leq b=s_{m+1}.$
Assume that  a lift $\Gamma$ of $\gamma$ in $\mathcal{E}_\bK^+$
is given by $\log_{k_0},$   $k_0 \in \Z$ on  $[s_1-\delta, s_1]$ for some $\delta > 0.$
The lift on $[s_m,e_m]$ is given by $k:= k_0 + (-1)^{k_0}\sigma(\gamma).$
\end{proposition}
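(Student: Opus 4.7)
The plan is to mirror the inductive strategy used in Proposition \ref{signature}, replacing isolated obstruction parameters by the obstruction intervals $[e_l, s_{l+1}]$ of the induced subdivision, and using the companion $\mathfrak{I}$ to decide at each such interval whether the branch of the logarithm remains fixed (a bounce) or jumps by $\pm 1$ (a flip). Note that the formula in the conclusion must be read as $k = k_0 + (-1)^{k_0}\sigma(\gamma, \mathfrak{I})$, the signature being relative to the chosen companion.

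First, I would propagate the given lift from $[s_1-\delta, s_1]$ across the first big arc $L_1 = \gamma|_{(s_1,e_1)}$. Since $L_1$ misses the real axis, a continuous lift $\cI^{\gamma}$ of $\mathfrak{I}$ does not change sign on $(s_1, e_1)$, so by Proposition \ref{easylifts} the branch $\log_{k_0}$ extends uniquely and continuously across $L_1$. The existence of the one-sided limits $I_1^s$ and $I_1^e$ (guaranteed by $\mathfrak{I}$ being a companion of $\gamma$) ensures that this extension reaches the closed interval $[s_1, e_1]$ continuously, giving the value $k_0$ for the branch index associated with $L_1$.

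Next, I would perform the inductive step across each obstruction interval $[e_l, s_{l+1}]$ for $l = 1, \dots, m-1$. If this interval is a bounce with respect to $\mathfrak{I}$, the lift of $\mathfrak{I}$ has matching signs at $e_l$ and $s_{l+1}$, so the branch index is unchanged: $k_{l+1} = k_l$. If instead it is a flip, the sign of the lift of $\mathfrak{I}$ reverses across the interval, and continuity of $\Arg^{\gamma}$ on $\mathcal{E}_{\bK}^+$ forces the branch index to jump by $\pm 1$, the sign being determined by the parity of $k_l$ and by whether $\gamma([e_l, s_{l+1}])$ lies in $\R^+$ or $\R^-$, i.e., by $\sign([e_l, s_{l+1}])$. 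Propagating this all the way to $L_m$ should then yield $k_m = k_0 + (-1)^{k_0} \sigma(\gamma, \mathfrak{I})$.

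The main obstacle is the combinatorial identity behind $\sigma(\gamma, \mathfrak{I}) = \sum_l \sign([e_{j_l}, s_{j_l+1}])(-1)^l$: the alternating factor $(-1)^l$ encodes a nontrivial cancellation when consecutive flips have the same sign, and this is exactly the subtlety already addressed in the proof of Proposition \ref{signature}. I would first reduce to the alternating case, where every flip contributes $\pm 1$ cleanly, and then argue, as in that proof, that inserting a non-alternating sign into the sequence of flip signs produces a modified sequence $s_l = \sign([e_{j_l}, s_{j_l+1}])(-1)^l$ whose sum still correctly records the total translation of $\arg^{\gamma}$. The global factor $(-1)^{k_0}$ is explained by the fact that an odd starting branch corresponds to tracking the conjugate shadow, reversing the orientation and hence the sign of every individual flip's contribution. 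Throughout, only the flips matter since bounces leave $k$ invariant, which justifies restricting the sum in Definition \ref{DefSignature} to flip-indices $j_1 < \cdots < j_k$.
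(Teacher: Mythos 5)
Your proposal is correct and takes essentially the same approach as the paper: the paper presents Proposition \ref{signature2} without a separate proof, declaring it a ``straightforward generalization'' of Proposition \ref{signature}, and your argument is precisely that generalization made explicit (propagate across big arcs by Proposition \ref{easylifts}, update the branch index only at flips of obstruction intervals, handle the combinatorics of non-alternating flip signs by the same insertion argument as in the proof of Proposition \ref{signature}, and explain the global factor $(-1)^{k_0}$ via the conjugate shadow for odd $k_0$). You also correctly flag that the $\sigma(\gamma)$ in the statement should be read as $\sigma(\gamma,\mathfrak{I})$, which is consistent with Definition \ref{DefSignature}.
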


To define the winding number for a closed  curve we have to take into account also the last interval $I_m$ and hence consider the closed signature.

\begin{corollary} Let $\gamma$ be a loop and $\sigma^c(\gamma)$ even. % and $I$ the imaginary unit at $s_1$ determined by $k_0.$
Then $\omega(\gamma,{\mathfrak I}) = |\sigma^c(\gamma,{\mathfrak I})|/2.$
\end{corollary}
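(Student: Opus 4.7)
The plan is to reduce this corollary to the shadow computation already carried out in Proposition \ref{signature} (and its extension Proposition \ref{signature2}), then invoke the classical formula for the winding number of a complex loop. First I would apply Proposition \ref{signature2} to a closed parameterization of $\gamma$ with a prescribed initial branch $\log_{k_0}$ on an interval preceding $s_1$. Since the circular signature $\sigma^c(\gamma,\mathfrak I)$ accounts for every flip encountered around the full loop (including the one between $e_m$ and $s_1$), tracing the lift once around returns a branch index $k_0 + (-1)^{k_0}\sigma^c(\gamma,\mathfrak I)$. The hypothesis that $\sigma^c(\gamma,\mathfrak I)$ is \emph{even} then forces the companion lift $\cI^\gamma$ to return to itself rather than to $-\cI^\gamma$, so $(\gamma,\mathfrak I)$ is untwisted (by the characterization in Proposition \ref{untwisted loop}), and in particular its shadows are \emph{loops} in $\mathbb C\setminus\{0\}$.

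Next I would compute the change of argument of the shadow $\gamma_{\cI^\gamma}$. Writing $\gamma(t)=x(t)+\cI^\gamma(t)y(t)$, the shadow is $\gamma_{\cI^\gamma}(t)=x(t)+iy(t)$, and its continuous complex argument $\arg^\gamma$ is exactly the function constructed branch-by-branch in the proof of Proposition \ref{signature}. Each obstruction interval that is a bounce contributes $0$ to the total change of $\arg^\gamma$, while each flip contributes $\pm\pi$ according to whether it occurs on the positive or negative real axis and according to the alternating-sign cancellation mechanism described in the proof of Proposition \ref{signature}. Summing these contributions around the entire loop gives
\begin{equation*}
\arg^\gamma(b)-\arg^\gamma(a) \;=\; \pi\,\sigma^c(\gamma,\mathfrak I).
\end{equation*}
Since this difference must be an integer multiple of $2\pi$ for $\gamma_{\cI^\gamma}$ to close up in $\mathbb C\setminus\{0\}$, the evenness of $\sigma^c(\gamma,\mathfrak I)$ is precisely what makes the shadow a bona fide complex loop, consistent with the conclusion of the previous step.

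Finally, the classical formula for the winding number of a complex loop gives
\begin{equation*}
\mathrm{wind}(\gamma_{\cI^\gamma}) \;=\; \frac{\arg^\gamma(b)-\arg^\gamma(a)}{2\pi} \;=\; \frac{\sigma^c(\gamma,\mathfrak I)}{2},
\end{equation*}
and Definition \ref{windingnew} yields $\omega(\gamma,\mathfrak I)=|\mathrm{wind}(\gamma_{\cI^\gamma})|=|\sigma^c(\gamma,\mathfrak I)|/2$, as desired. The choice of the conjugate shadow only flips the sign of $\mathrm{wind}(\gamma_{\cI^\gamma})$, so the absolute value makes the formula independent of the lift of $\mathfrak I$.

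The main obstacle is the careful bookkeeping in the argument-change computation, namely justifying that the alternating-signature cancellation used in the proof of Proposition \ref{signature} carries over verbatim to the circular setting of Definition \ref{DefSignature}; once the sequence of flip-signs is reduced to an alternating sequence by cancelling adjacent repetitions (as explained after the proof of Proposition \ref{signature}), the total argument change is read off as $\pi$ times the remaining signed count, which is $\sigma^c$. Modulo this combinatorial verification, the corollary follows directly.
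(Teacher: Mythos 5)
Your proof is correct and follows the route the paper itself implies (the corollary is stated as a ``straightforward generalization'' with no explicit proof in the text). The three ingredients you assemble are exactly the ones the authors have in mind: Proposition~\ref{signature2} for the branch shift over one circuit, the combinatorial reduction in the proof of Proposition~\ref{signature} (cancelling adjacent equal signs to an alternating sequence, which also shows that the parity of $\sigma^c$ is the parity of the number of flips, hence that evenness of $\sigma^c$ forces the lift $\cI^\gamma$ to close up and $(\gamma,\mathfrak I)$ to be untwisted via Proposition~\ref{untwisted loop}), and the identification of the total argument change of the shadow with $\pi\sigma^c$, giving $\operatorname{wind}(\gamma_{\cI^\gamma})=\sigma^c/2$ and thus $\omega(\gamma,\mathfrak I)=|\sigma^c|/2$ by Definition~\ref{windingnew}. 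The final remark that passing to the conjugate shadow only reverses the sign, which is absorbed by the absolute value, is exactly what makes the definition of $\omega$ well-posed and is worth saying. Essentially the same approach as the paper intends; the writeup fills in the details the authors leave to the reader.
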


\begin{corollary}\label{ClosedNonTame2}
Let $\gamma:[a,b]\rightarrow \bK \setminus \{0\}$ be a loop with the induced  subdivision $a \leq s_1 < e_1 \leq s_2 < e_2 <\ldots < s_m < e_m \leq b.$

 Let $1 \leq j_1 < \ldots<j_k \leq m$  be the indices for which $\gamma,$ restricted to the neighbourhoods of the intervals $J_{j_l}:=[e_{j_l}, s_{j_{l + 1}}] \subset (0,\infty),$
 does not have a companion and assume $\gamma$ has a companion on a neighbourhood of the closure of $[a,b] \setminus \cup_{l = 1}^k J_{j_l}.$
 Then a lift of $\gamma$ in $\mathcal{E}_\bK^+$ exists if and only if
$\sigma(\gamma|_{[s_{j_l+1},e_{j_{l+1}}]}) \in \{0,-1\}$ for each $l =
1,\ldots, m-1.$
If it exists, the lift is a loop.
\end{corollary}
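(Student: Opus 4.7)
The plan is to mimic the argument of Corollary \ref{ClosedNonTame}, now with each isolated non-tame obstruction parameter replaced by a full obstruction interval $J_{j_l}\subset(0,\infty)$. The key local observation is that since $\gamma(J_{j_l})\subset(0,\infty)$, the only branches of the hypercomplex logarithm that continuously lift $\gamma$ on a neighbourhood of $J_{j_l}$ are the principal ones, i.e.\ those with branch index $k\in\{0,-1\}$ (corresponding to $\arg$ valued in $[0,\pi]$ or $[-\pi,0]$). This generalises the observation made in the proof of Corollary \ref{ClosedNonTame} from isolated positive-real points to positive-real intervals.

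First I would lift $\gamma$ separately on each closed subinterval $[s_{j_l+1},e_{j_{l+1}}]$ on which a companion is available by hypothesis; Proposition \ref{TameLift1} produces such a lift, and Proposition \ref{signature2} tells us that if the branch index equals $k_0$ near the left endpoint $s_{j_l+1}$, then near the right endpoint $e_{j_{l+1}}$ it equals
\[
k_1 \;=\; k_0 \,+\, (-1)^{k_0}\,\sigma\bigl(\gamma|_{[s_{j_l+1},e_{j_{l+1}}]}\bigr).
\]
To glue these local lifts continuously across the next obstruction interval $J_{j_{l+1}}$, both $k_0$ and $k_1$ must belong to $\{0,-1\}$. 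A short case analysis then gives the equivalence: if $k_0=0$ then $k_1=\sigma$, forcing $\sigma\in\{0,-1\}$; if $k_0=-1$ then $k_1=-1-\sigma$, forcing $\sigma\in\{-1,0\}$. In both cases the admissibility condition collapses to $\sigma(\gamma|_{[s_{j_l+1},e_{j_{l+1}}]})\in\{0,-1\}$, which is precisely the stated criterion.

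For the closedness assertion, after, if needed, a cyclic reparameterisation placing $a\simeq b$ inside some $J_{j_l}$, one has $\gamma(a)=\gamma(b)\in(0,\infty)$ with branch indices $k_a,k_b\in\{0,-1\}$; but both admissible choices give argument component equal to $0$ over a positive real, so $\Gamma(a)=(\gamma(a),0)=\Gamma(b)$ in $\mathscr E_\bK^+$. Hence the lift is automatically a loop in the sense of Definition \ref{LiftsClosedCurves}.

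The main technical obstacle is the gluing step: one must justify that forcing $k\in\{0,-1\}$ near each positive-real obstruction arc is the precise constraint, and that the limiting arguments coming from the two adjacent companion regions agree with the principal-branch argument on $J_{j_l}$ (so that the four pieces genuinely concatenate to a continuous path into $\mathscr E_\bK^+$). Once this compatibility is verified, the equivalence follows from the short arithmetic dichotomy above, and closedness is automatic from the vanishing of $\Arg$ on $(0,\infty)$ for the two admissible branches.
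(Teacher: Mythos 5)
Your proof is essentially correct and follows exactly the route the paper intends: Corollary~\ref{ClosedNonTame2} is stated in the paper as a ``straightforward generalization of Proposition~\ref{signature} and Corollary~\ref{ClosedNonTame}'' with no separate proof, and your argument is the natural port of the proof of Corollary~\ref{ClosedNonTame} from isolated non-tame parameters to obstruction intervals. The arithmetic dichotomy $k_0=0\Rightarrow k_1=\sigma$, $k_0=-1\Rightarrow k_1=-1-\sigma$ reproduces the paper's computation $(-1)^{k_0}\sigma\in\{0,1\}$ for $k_0=-1$, and the closedness argument (both admissible branches send a point of $(0,\infty)$ to $(\gamma(a),0)\in\mathscr{E}_\bK^+$) is sound. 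The ``main technical obstacle'' you flag at the end -- that near a positive-real obstruction arc with no companion the only admissible branch indices are $k\in\{0,-1\}$ -- is in fact the same step the paper asserts without elaboration in the proof of Corollary~\ref{ClosedNonTame}; it follows because any lift $\Gamma=(\gamma,\Arg^\gamma)$ with $|\Arg^\gamma|\to 2l\pi\ne 0$ on $J_{j_l}$ would make $\cI^\gamma=\Arg^\gamma/|\Arg^\gamma|$ a continuous choice of imaginary unit there, contradicting the no-companion hypothesis, so $\Arg^\gamma$ must vanish on $J_{j_l}$ and hence $\arg^\gamma\to 0$, forcing $k\in\{0,-1\}$. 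You could cite this one-line justification to close the gap, but the level of detail you give matches that of the paper's own treatment.
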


\end{document}